\newtheorem{theorem}{Theorem}
\newtheorem{lemma}[theorem]{Lemma}
\newtheorem{corollary}{Corollary}
\theoremstyle{definition}
\newtheorem{remark}{Remark}
\newcommand{\NN}{\mathbb{N}}
\newcommand{\R}{\mathbb{R}}
\newcommand{\C}{\mathbb{C}}
\newcommand{\de}[0]{\mathrel{\mathop:}=}
\newcommand{\ie}[0]{\mathrm{i}}
\newcommand{\dif}[1]{\mathrm{d}#1}
\DeclareMathOperator*{\Res}{Res}
\begin{document}

\title{An explicit form of Ingham's zero density estimate}
\author[Shashi Chourasiya]{Shashi Chourasiya}
\address{
    SC: School of Science, University of New South Wales (Canberra), Northcott Drive, Campbell, ACT 2600, Australia
}
\email{s.chourasiya@unsw.edu.au}
\author[Aleksander Simoni\v{c} ]{Aleksander Simoni\v{c} }
\address{
    AS: Faculty of Mathematics, Natural Sciences and Information Technologies, University of Primorska, Glagolja\v{s}ka 8, 6000 Koper, Slovenia
}
\email{aleksander.simonic@famnit.upr.si}

\subjclass[2020]{11M06, 11M26}
\keywords{Riemann zeta-function, Zero density estimates, Moments of the Riemann zeta-function}
\begin{abstract}
Ingham (1940) proved that $N(\sigma,T)\ll T^{3(1-\sigma)/(2-\sigma)}\log^{5}{T}$, where $N(\sigma,T)$ counts the number of the non-trivial zeros $\rho$ of the Riemann zeta-function with $\Re\{\rho\}\geq\sigma\geq 1/2$ and $0<\Im\{\rho\}\leq T$. We provide an explicit version of this result with the exponent $(7-5\sigma)/(2-\sigma)$ of the logarithmic factor. In addition, we also provide an explicit estimate with asymptotically correct main term for the fourth power moment of the Riemann zeta-function on the critical line.
\end{abstract}

\maketitle
\thispagestyle{empty}

\section{Introduction}
Let $\zeta(s)$ be the Riemann zeta-function and let $\rho=\beta+\ie\gamma$ be its non-trivial zero, that is, $\zeta(\rho)=0$ with $0\leq\beta\leq 1$ and $\gamma\in\R$.  Let $N(T)$ be the number (counted with multiplicities) of all non-trivial zeros with $0<\gamma\leq T$, while for $\sigma\geq 0$ and $T>0$ define 
$N(\sigma,T)\de \# \left\{ \rho \colon \zeta(\rho)=0, \beta\geq\sigma, 0<\gamma\leq T\right\}$, where the zeros are again counted with multiplicities. Trivially, the function $N(\sigma,T)$ is decreasing in $\sigma$ and increasing in $T$, $N(\sigma,T)\leq N(T)\sim\frac{1}{2\pi}T\log{T}$ for $0\leq\sigma\leq 1/2$, $N(\sigma,T)\leq \frac{1}{2}N(T)$ for $1/2<\sigma<1$ and $N(\sigma,T)=0$ for $\sigma\geq 1$, while the Riemann Hypothesis (RH) asserts that $N(\sigma,T)=N(T)$ for $0\leq\sigma\leq1/2$ and $N(\sigma,T)=0$ for $\sigma>1/2$. Since RH has been verified up to $H_{\mathrm{RH}}\de 3\cdot 10^{12}$ by Platt and Trudgian~\cite{PT21}, we have $N(\sigma,T)=0$ for $\sigma>1/2$ and $T \leq H_{\mathrm{RH}}$. Consequently, we can restrict ourselves to the range of $1/2\leq\sigma\leq1$ and $T\geq H_{\mathrm{RH}}$ throughout this paper.

Loosely speaking, a non-trivial estimate on $N(\sigma,T)$ is known as \emph{zero density estimate}. Estimates of the form\footnote{As usual, $A(\sigma)$ is defined as ``infimum of all such exponents'', see~\cite[Definition~37]{TaoTrudgianYang} for a precise definition.} 
\begin{equation}
\label{eq:GeneralForm}
N(\sigma,T)\ll T^{A(\sigma)(1-\sigma)}\log^{B}{T}\quad \textrm{or} \quad N(\sigma,T)\ll_{\varepsilon}T^{A(\sigma)(1-\sigma)+\varepsilon} 
\end{equation}
with $A(\sigma)\leq1/(1-\sigma)$, $B\geq0$, $\varepsilon>0$ and the implied constants being uniform in $\sigma$ and $T$, are of great interest in the distribution theory of prime numbers~\cite{Ivic,StarichkovaZD}. The Lindel\"{o}f Hypothesis (LH), which is believed to be a weaker statement than RH, implies $A(\sigma)\leq 2$ in the expression on the right-hand side of~\eqref{eq:GeneralForm} for all\footnote{It is known that LH guarantees even $A(\sigma)=0$ for $\sigma \geq \frac{3}{4}+\delta$ and $\delta>0$ fixed.} $\sigma\geq 1/2$, and this is known as the \emph{density hypothesis}~\cite[p.~47]{Ivic}. Bourgain~\cite{jean_bourgain_2000} showed that the density hypothesis holds for $\sigma\geq 25/32=0.78125$, and substantial progress has been made in obtaining much better upper bounds for $A(\sigma)$ in this region, see~\cite[Table 2 on p.~33]{TaoTrudgianYang} for the current best results. On the other hand, region $1/2\leq \sigma<19/25=0.76$ is dominated by two results, namely that
\begin{equation*}
A(\sigma)\leq \min\left\{\frac{3}{2-\sigma},\frac{15}{5\sigma+3}\right\},
\end{equation*}
which is due to Ingham~\cite{In40} and Guth--Maynard~\cite{guth2024new}, respectively. Note that $3/(2-\sigma)\leq 15/(5\sigma+3)$ for $\sigma\in[1/2,0.7]$. This means that, apart from Selberg's zero density estimate\footnote{His result is sharper than~\cite{In40} for $\sigma$ close to the critical line, e.g., when $0\leq\sigma-1/2\ll \log{\log{T}}/\log{T}$.}~\cite{Sel46} (see also~\cite{BaluyotPhD,SimonicEZDE}), Huxley's refinement~\cite[Chapter~23]{HuxleyBook}
\begin{equation}
\label{eq:Ingham2ndZD}
N(\sigma,T) \ll \left(\sigma-\frac{1}{2}\right)^{-\frac{1}{3}}T^{\frac{3(1-\sigma)}{2-\sigma}}\left(\log{T}\right)^{\frac{6}{2-\sigma}}
\end{equation}
of Ingham's \emph{later}\footnote{This is to distinguish between $N(\sigma,T)\ll T^{3(1-\sigma)/(2-\sigma)}\log^{5}{T}$ and Ingham's earlier zero density estimates, see~\cite{ingham1937difference}, such as $N(\sigma,T)\ll T^{8(1-\sigma)/3}\log^{5}{T}$.} zero density estimate is currently the best known result on $N(\sigma,T)$ for $1/2<\sigma\leq 0.7$.

In order to obtain effective and unconditional results that are related to the distribution of prime numbers, explicit versions of~\eqref{eq:GeneralForm} are often required, see~\cite{fiori2023sharper,johnston2023some,CullyJohnstonRvM,CullyJohnstonRvMII}. Recently, several relevant results have appeared in the literature, see~\cite{Kadiri2013,KADIRI201822,bellotti2024explicit,chourasiya2024explicit} for example. The aim of this paper is to provide the first explicit version of~\eqref{eq:Ingham2ndZD}, see Corollary~\ref{cor:mainZD}, where we also made an improvement over the logarithmic factor.

\begin{corollary}
\label{cor:mainZD}
Let $T\geq H_{\mathrm{RH}}$. Then
\begin{equation}
\label{eq:mainZD}
N(\sigma,T) \leq \mathfrak{B}_1 T^{\frac{3(1-\sigma)}{2-\sigma}}\left(\log{T}\right)^{\frac{7-5\sigma}{2-\sigma}} + \mathfrak{B}_2 \log^{2}{T} + \mathfrak{B}_3 \log{T}
\end{equation}
for $\sigma_1\leq\sigma\leq\sigma_2$, where the values for $\mathfrak{B}_1$, $\mathfrak{B}_2$ and $\mathfrak{B}_3$ are given by Table~\ref{tab:CorMainZD}.
\end{corollary}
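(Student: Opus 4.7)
The plan is to obtain the corollary as a consequence of a more refined explicit zero density theorem whose constants depend continuously on $\sigma$, which is itself proved by carrying out Ingham's classical argument with every estimate made explicit. Once the refined theorem is in hand, the uniform constants $\mathfrak{B}_1, \mathfrak{B}_2, \mathfrak{B}_3$ on each subinterval of $[\sigma_1, \sigma_2]$ in Table~\ref{tab:CorMainZD} are obtained by maximising the $\sigma$-dependent factors on that subinterval -- a routine optimisation that relies on the monotonicity of the auxiliary functions produced by the argument.

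For the refined theorem I would follow the mollification scheme. Let $M_X(s) = \sum_{n \leq X} \mu(n) n^{-s}$. Using Mellin inversion with a smooth weight (e.g.\ the weight induced by $\Gamma(w)$), every non-trivial zero $\rho = \beta + \ie\gamma$ with $\beta \geq \sigma$ produces the identity
\[
1 - \sum_{n \leq X} \mu(n) n^{-\rho} e^{-n/X} = \frac{1}{2\pi\ie} \int_{(c)} \zeta(\rho+w) M_X(\rho+w) \Gamma(w) X^w\, dw.
\]
Shifting the contour to $\Re(w) = 1/2 - \beta$ and taking absolute values forces each such $\rho$ into one of two alternatives: either the mollified sum at $\rho$ is $\gg 1$, or the shifted contour integral is $\gg 1$. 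After extracting a well-spaced subset of zeros (at the cost of a controlled logarithmic factor obtained from an explicit version of the standard $V$-separation argument), counting zeros in each alternative reduces to an $L^2$-mean of a Dirichlet polynomial of length $X$ or to an $L^2$-mean of $\zeta(1/2+\ie t)\, M_X(1/2+\ie t)$ over $[0,T]$.

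The first mean square is handled by an explicit Montgomery--Hilbert-type mean value theorem, yielding a bound proportional to $(T+X)\log X$ with sharp constants. The second is the crucial input: by Cauchy--Schwarz,
\[
\int_0^T \left|\zeta(1/2+\ie t)\, M_X(1/2+\ie t)\right|^2 dt \leq \left(\int_0^T |\zeta(1/2+\ie t)|^4 dt\right)^{1/2} \left(\int_0^T |M_X(1/2+\ie t)|^4 dt\right)^{1/2},
\]
and into this we substitute the explicit fourth power moment of $\zeta$ with asymptotically correct main term (the companion result announced in the abstract), together with an explicit fourth moment estimate for the mollifier. Choosing $X$ of size $T^{1/(2-\sigma)}$ balances the two contributions and yields the exponent $3(1-\sigma)/(2-\sigma)$; the smoothness of the $\Gamma$-weight keeps the logarithmic factor at $\log^3 T$ rather than the $\log^5 T$ of Ingham's original argument.

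The main obstacle is the numerical bookkeeping throughout: combining the fourth moments through Cauchy--Schwarz without accruing spurious log-factors, tracking explicit mean-value constants for Dirichlet polynomials at the critical length $X \asymp T^{1/(2-\sigma)}$, and carrying explicit residues through the smoothed zero detection identity. A secondary but non-trivial issue is the passage from the $\sigma$-dependent refined theorem to the piecewise-constant statement of the corollary, which requires verifying that the suprema of the $\sigma$-dependent coefficients on each subinterval of Table~\ref{tab:CorMainZD} are attained at endpoints, and that the lower-order terms -- which do not benefit from the optimal choice of $X$ -- can indeed be absorbed into the $\log^2 T$ and $\log T$ contributions with explicit constants $\mathfrak{B}_2$ and $\mathfrak{B}_3$.
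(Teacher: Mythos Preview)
Your outline is a genuinely different route from the paper's, and it contains a real gap.

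The paper does \emph{not} use zero detection with well-spaced zeros. It follows Ingham's original scheme via Littlewood's lemma applied to $h_X(s)=1-(\zeta(s)M_X(s)-1)^2$: the zero count is bounded by a contour integral of $\log|h_X|$, and the critical vertical integral $\int_H^T\log|h_X(\sigma'+\ie t)|\,\dif t$ is dominated by $\int_H^T|f_X(\sigma'+\ie t)|^{1/w}\,\dif t$ for an exponent $1/w\in[4/3,2]$ chosen so that Gabriel's two-variable convexity theorem interpolates between the line $\Re s=1/2$ and the line $\Re s=1+\delta/\log X$. On the half-line the paper uses H\"older with exponents $(3,3/2)$, giving $(\int|\zeta|^4)^{1/3}(\int|M_X|^2)^{2/3}$, so only the \emph{second} moment of $M_X$ is ever needed. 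The mollifier length is $X=hT$ throughout; the Ingham exponent $3(1-\sigma)/(2-\sigma)$ emerges from the Gabriel interpolation weights, not from balancing two zero counts. The paper's own footnote records that zero-detection arguments of the type you sketch ``usually [produce] worse log-factor''.

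Your specific Cauchy--Schwarz step fails outright. With $X=T^{1/(2-\sigma)}$ you have $X^2=T^{2/(2-\sigma)}>T$ for every $\sigma>0$, so the mean-value theorem forces $\int_0^T|M_X|^4\asymp X^2\log^4 X$, not $T\log^4 X$. Feeding this through your dichotomy gives, for the contour-integral class of zeros,
\[
R_2 \ll X^{1-2\sigma}\bigl(T\log^4 T\bigr)^{1/2}\bigl(X^2\log^4 X\bigr)^{1/2}
= X^{2-2\sigma}T^{1/2}\log^4 T
= T^{(6-5\sigma)/(2(2-\sigma))}\log^4 T,
\]
which exceeds Ingham's exponent $3(1-\sigma)/(2-\sigma)=(6-6\sigma)/(2(2-\sigma))$ by $\sigma/(2(2-\sigma))>0$; at $\sigma=1/2$ you lose a full factor of $T^{1/6}$. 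The fourth moment of the mollifier is therefore not a harmless bookkeeping detail but the point where the argument breaks. The paper's H\"older splitting with exponents $(3,3/2)$, combined with Gabriel convexity rather than zero detection, is precisely what sidesteps this obstruction while keeping the logarithmic exponent at~$3$.
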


Note that $(7-5\sigma)/(2-\sigma)$ is a decreasing function in $\sigma\in[1/2,1]$ that has the range $[2,3]$ there. The intervals $[\sigma_1,\sigma_2]$ in Table~\ref{tab:CorMainZD} are elements of the set $\left\{[n/32,(n+1)/32]\colon n\in\{16,\ldots,31\}\right\}$. This choice is justified by the fact that~\eqref{eq:mainZD} is asymptotically better than $N(\sigma,T)\ll T^{8(1-\sigma)/3}\log^{5}{T}$ for $\sigma\in[1/2,0.875]$ and the latter zero density estimate\footnote{We are comparing our result to an improved version (see Section~\ref{sec:appendix}) of~\cite[Theorem~1.1 and Table~1]{KADIRI201822}, see also~\cite[Lemma~7.1]{CullyJohnstonRvM}, where a variant of such a zero density estimate was proved.} is asymptotically better than the trivial one for $\sigma\in(0.625,1)$. Corollary~\ref{cor:mainZD} implies that 
\begin{gather*}
N(\sigma,T) \leq 8.185\cdot T^{\frac{3(1-\sigma)}{2-\sigma}}\left(\log{T}\right)^{\frac{7-5\sigma}{2-\sigma}} + 9.461\log^{2}{T} + 167.8\log{T}, \quad \sigma\in[0.500,0.625],\; T\geq 3\cdot10^{12}, \\
N(\sigma,T) \leq 21.77\cdot T^{\frac{3(1-\sigma)}{2-\sigma}}\left(\log{T}\right)^{\frac{7-5\sigma}{2-\sigma}} + 8.290\log^{2}{T} + 147.0\log{T}, \quad \sigma\in[0.625,0.875],\; T\geq 3\cdot10^{12}.
\end{gather*}
The above estimates improve~\cite[Theorem~1.1 and Table~1]{chourasiya2024explicit} for $\sigma\in[0.5, 0.667]$, and the latter improves~\eqref{eq:KLN} for $\sigma\in[0.625,0.8575]$ while it is also better for $\sigma\in(0.8575,0.875]$ and $T\geq 10^{61.3}$.

\begin{table}[h]
\centering
\footnotesize{
\begin{tabular}{lllllllll} 
\toprule
$\sigma_1$ & $\sigma_2$ & $\mathfrak{B}_1$ & $\mathfrak{B}_2$ & $\mathfrak{B}_3$ & $d$ & $\kappa$ & $\delta$ & $\log_{10}A_0$ \\ 
\midrule
$0.50000$ & $0.53125$ & $5.069$ & $9.461$ & $167.8$ & $0.28$ & $0.34$ & $0.012$ & $11.25$ \\
$0.53125$ & $0.56250$ & $6.046$ & $9.168$ & $162.6$ & $0.28$ & $0.32$ & $0.025$ & $11.26$ \\
$0.56250$ & $0.59375$ & $7.077$ & $8.875$ & $157.4$ & $0.28$ & $0.30$ & $0.038$ & $11.27$ \\
$0.59375$ & $0.62500$ & $8.185$ & $8.582$ & $152.2$ & $0.28$ & $0.29$ & $0.052$ & $11.27$ \\
$0.62500$ & $0.65625$ & $9.387$ & $8.290$ & $147.0$ & $0.28$ & $0.27$ & $0.066$ & $11.28$ \\
$0.65625$ & $0.68750$ & $10.70$ & $7.997$ & $141.9$ & $0.28$ & $0.25$ & $0.081$ & $11.29$ \\
$0.68750$ & $0.71875$ & $12.11$ & $7.704$ & $136.7$ & $0.28$ & $0.23$ & $0.097$ & $11.30$ \\
$0.71875$ & $0.75000$ & $13.66$ & $7.411$ & $131.5$ & $0.28$ & $0.21$ & $0.114$ & $11.31$ \\
$0.75000$ & $0.78125$ & $15.35$ & $7.118$ & $126.3$ & $0.28$ & $0.20$ & $0.132$ & $11.32$ \\
$0.78125$ & $0.81250$ & $17.21$ & $6.826$ & $121.1$ & $0.28$ & $0.18$ & $0.150$ & $11.33$ \\
$0.81250$ & $0.84375$ & $19.31$ & $6.533$ & $115.9$ & $0.28$ & $0.16$ & $0.170$ & $11.35$ \\
$0.84375$ & $0.87500$ & $21.77$ & $6.240$ & $110.7$ & $0.28$ & $0.14$ & $0.192$ & $11.36$ \\
$0.87500$ & $0.90625$ & $24.83$ & $5.745$ & $101.9$ & $0.29$ & $0.13$ & $0.215$ & $11.37$ \\
$0.90625$ & $0.93750$ & $29.01$ & $5.463$ & $96.87$ & $0.29$ & $0.11$ & $0.241$ & $11.40$ \\
$0.93750$ & $0.96875$ & $35.35$ & $5.180$ & $91.86$ & $0.29$ & $0.09$ & $0.270$ & $11.42$ \\
$0.96875$ & $1.00000$ & $46.06$ & $4.897$ & $86.84$ & $0.29$ & $0.07$ & $0.303$ & $11.46$ \\
\bottomrule
\end{tabular}}
\caption{The values for $\mathfrak{B}_1$, $\mathfrak{B}_2$ and $\mathfrak{B}_3$ from Corollary~\ref{cor:mainZD} for the corresponding $\sigma\in[\sigma_1,\sigma_2]$, as well as the optimized values of several parameters from Theorem~\ref{thm:mainZD}.} 
\label{tab:CorMainZD}
\end{table}

The precise result from which Corollary~\ref{cor:mainZD} follows is stated as Theorem~\ref{thm:mainZD} in Section~\ref{subsec:ZD}. Its proof is based on Ingham's original approach~\cite{In40}, and relies\footnote{Two other important ingredients are Gabriel's convexity theorem (Theorem~\ref{thm:FXbyGabriel}) and inequality~\eqref{eq:Littlewood} that is implied by Littlewood's lemma. One can avoid them, usually for the cost of producing worse log-factor in~\eqref{eq:Ingham2ndZD}, see~\cite[Chapter~11]{Ivic} and also~\cite[Chapter~12]{MontgomeryTopics} for generalisations to Dirichlet $L$-functions. In fact, Huxley established~\eqref{eq:Ingham2ndZD} for this class of $L$-functions.} on an upper bound for the fourth power moment of $\zeta(s)$ on the critical line. Let $k$ be a positive real number and $0\leq A\leq B$, and define
\begin{equation}
\label{eq:Mk}
\mathcal{M}_{k}(A,B) \de \int_{A}^{B}\left|\zeta\left(\frac{1}{2}+\ie t\right)\right|^{2k}\dif{t}.
\end{equation}
In particular, we set $\mathcal{M}_{k}(T)\de \mathcal{M}_{k}(0,T)$ for $T\geq 0$. The LH trivially implies $\mathcal{M}_{k}(T)\ll_{\varepsilon} T^{1+\varepsilon}$, and if the latter is true for some strictly increasing and unbounded sequence of $k$'s, then LH holds~\cite[Theorem~13.2]{titchmarsh1986theory}. However, much stronger statement
\begin{equation}
\label{eq:MkBound}
\mathcal{M}_{k}(T) \asymp_{k} T\left(\log{T}\right)^{k^2}
\end{equation}
is believed to be true. Unconditionally, the upper bound in~\eqref{eq:MkBound} is known to hold for $0<k\leq 2$, see~\cite{HeapRadziwillSound}, while the lower bound is true for all $k\geq 1$, see~\cite{RadziwillSoundLower}. On RH, the upper bound in~\eqref{eq:MkBound} also holds for $k>2$, see~\cite{Harper2013}, while the lower bound is also true for $0<k<1$, see~\cite{HBFractionalMoments}. Random matrix theory predicts the asymptotic form of~\eqref{eq:MkBound} for $k\in\NN$, that is,
\begin{equation}
\label{eq:MkAsympBound}
\mathcal{M}_{k}(T) = Tp_{k}(\log{T}) + O_{\varepsilon}\left(T^{1/2+\varepsilon}\right)
\end{equation}
for $\varepsilon>0$, where $p_{k}(x)$ is a polynomial of degree $k^2$ whose coefficients can be effectively computed, see~\cite{ConreyMoments}.
Polynomials $p_{k}(x)$ from~\eqref{eq:MkAsympBound} are known only for $k\in\{1,2\}$: $p_1(x)=x+2\gamma-1-\log{(2\pi)}$ where $\gamma$ is the Euler--Mascheroni constant, is due to Ingham~\cite{Ingham1926MV}, and $p_2(x)=\frac{1}{2\pi^2}x^4+\sum_{k=1}^{4}a_{4-k}x^{4-k}$ is due to Heath-Brown~\cite{HeathBrown1979TheFP}, where his\footnote{His error term is of order $T^{7/8+\varepsilon}$. This was improved in~\cite{IvicMotohashi95}, see also~\cite{PalTrud}, to $T^{2/3+\varepsilon}$ by using methods from the spectral theory of the Riemann zeta-function.} values for $a_k$, $k\in\{0,1,2,3\}$, coincide with those predicted by the random matrix theory. It should be mentioned that Ingham proved in the same paper~\cite{Ingham1926MV} that $p_2(x)=\frac{1}{2\pi^2}x^4+O(x^3)$, i.e.,
\begin{equation}
\label{eq:4thPMGeneral}
\mathcal{M}_{2}(T) = \frac{1}{2\pi^2}T\log^{4}{T} + O\left(T\log^{3}{T}\right),
\end{equation}
thus providing an asymptotic formula for $\mathcal{M}_{2}(T)$. Recently, explicit estimates for the second power moment $\mathcal{M}_{1}(T)$ appeared in~\cite{DonaZunigaAlterman} and~\cite{SimonicStarichkova} that are of the form~\eqref{eq:MkAsympBound} but with better error terms. To the best of the authors' knowledge, no explicit estimates for the fourth power moment $\mathcal{M}_{2}(T)$ have appeared in the literature. In this paper, we are providing them in Corollary~\ref{cor:main4thPM}.

\begin{corollary}
\label{cor:main4thPM}
We have
\begin{equation}
\label{eq:main4thPM1}
\mathcal{M}_{2}(T) \leq \frac{1}{\pi^2}T\log^{4}{\frac{T}{2}} + 20.7225\cdot T\log^{\frac{7}{2}}{\frac{T}{2}} + 1.9532\cdot 10^6
\end{equation}
for $T\geq 3\cdot 10^3$, and
\begin{equation}
\label{eq:main4thPM2}
-\left(\mathfrak{M}_{1}(T_0)+\frac{2\log{2}}{\pi^2}+\frac{T_0\log{\frac{T}{2}}}{\pi^2 T}\right)T\log^{3}{\frac{T}{2}}\leq \mathcal{M}_{2}(T)-\frac{1}{2\pi^2}T\log^{4}{\frac{T}{2}} \leq \mathfrak{M}_{1}(T_0) T\log^{3}{\frac{T}{2}} + \mathfrak{M}_{2}(T_0)
\end{equation}
for $T\geq T_0$, where the values for $\mathfrak{M}_1(T_0)$ and $\mathfrak{M}_2(T_0)$ are given by Table~\ref{tab:CorMain4thPM}.
\end{corollary}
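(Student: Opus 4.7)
The plan is to derive both~\eqref{eq:main4thPM1} and~\eqref{eq:main4thPM2} from a single explicit version of Ingham's asymptotic~\eqref{eq:4thPMGeneral}, obtained by rendering the 1926 argument quantitative. The starting point is an explicit approximate functional equation for $\zeta(s)^{2}=\sum d(n)n^{-s}$, applied symmetrically on the critical line:
\begin{equation*}
\zeta\bigl(\tfrac12+\ie t\bigr)^{2}=A(t)+\chi\bigl(\tfrac12+\ie t\bigr)^{2}\overline{A(t)}+R(t),\qquad A(t)\de\sum_{n\leq t/(2\pi)}\frac{d(n)}{n^{1/2+\ie t}},
\end{equation*}
with $R(t)$ an explicit remainder whose mean square $\int_{0}^{T}|R(t)|^{2}\dif{t}$ admits a bound of shape $T\log^{3}T$ with effective constants (obtainable by contour methods akin to Heath-Brown's treatment). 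Since $|\chi(\tfrac12+\ie t)|=1$, squaring yields $|\zeta(\tfrac12+\ie t)|^{4}=2|A(t)|^{2}+2\Re\bigl(\overline{\chi(\tfrac12+\ie t)^{2}}A(t)^{2}\bigr)+(\text{terms in }R)$, which is to be integrated over $[0,T]$.

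The diagonal piece $\int_{0}^{T}|A(t)|^{2}\dif{t}$ is evaluated by combining an explicit Montgomery--Vaughan-type mean value theorem for Dirichlet polynomials with an effective form of Ramanujan's identity $\sum_{n\leq N}d(n)^{2}/n=\log^{4}{N}/(4\pi^{2})+O(\log^{3}{N})$, itself obtainable by applying Perron's formula to $\zeta(s)^{4}/\zeta(2s)$ with explicit constants, or by partial summation from Ramanujan's estimate for $\sum_{n\leq N}d(n)^{2}$. This produces $\int_{0}^{T}|A|^{2}\dif{t}=\tfrac{T}{8\pi^{2}}\log^{4}(T/(2\pi))+O(T\log^{3}{T})$ with effective constants, so that $2\int_{0}^{T}|A|^{2}\dif{t}$ already matches Ingham's main term $\tfrac{1}{2\pi^{2}}T\log^{4}{T}$ up to a $T\log^{3}T$ error. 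For the weaker bound~\eqref{eq:main4thPM1}, the cross term $2\Re(\overline{\chi^{2}}A^{2})$ is not evaluated but merely bounded by $2|A|^{2}$, doubling the main coefficient from $\tfrac{1}{2\pi^{2}}$ to $\tfrac{1}{\pi^{2}}$; the mixed $R$-terms are then estimated by Cauchy--Schwarz against the $L^{2}$-norms just computed, the resulting error being of order $(T\log^{4}T)^{1/2}(T\log^{3}T)^{1/2}=T\log^{7/2}T$, which accounts precisely for the unusual exponent $7/2$ in~\eqref{eq:main4thPM1}. Conversion of $\log{T}$ to $\log(T/2)$ and absorption of low-order contributions then supplies the constants $20.7225$ and $1.9532\cdot 10^{6}$.

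For the sharp two-sided bound~\eqref{eq:main4thPM2} the cross term must be evaluated rather than merely bounded. Expanding $A(t)^{2}=\sum_{m,n\leq t/(2\pi)}d(m)d(n)(mn)^{-1/2-\ie t}$ and exchanging sum with integral leaves oscillatory integrals $\int\overline{\chi(\tfrac12+\ie t)^{2}}(mn)^{-\ie t}\dif{t}$ with saddle point at $t_{0}=2\pi\sqrt{mn}$. An explicit stationary-phase analysis, uniform in $m,n$, followed by an explicit hyperbola-method split of the resulting weighted divisor sum $\sum_{mn\leq T/(2\pi)}d(m)d(n)/\sqrt{mn}$, shows that the cross term contributes only $O(T\log^{3}T)$ with effective constant (no main-term cancellation is needed, since $2\int|A|^{2}$ already carries the correct main). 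The lower bound in~\eqref{eq:main4thPM2} follows from the same identity by reversing the signs on the AFE and stationary-phase remainders. The main obstacle will be this last step: producing a sharp explicit saddle-point lemma with small numerical constants, uniform in $m,n\leq T/(2\pi)$, together with explicit estimates for the associated incomplete divisor sums. Propagating the resulting constants through the AFE remainder, the mean value theorem, Ramanujan's identity and the stationary-phase error, and then optimizing over the free parameters (AFE cut-off, threshold in the hyperbola split), is the source of the specific numerical values $20.7225$, $48.801$ and $48.942$ in the statement.
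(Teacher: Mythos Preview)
Your proposal describes a genuinely different route from the one in the paper. The paper does \emph{not} make Ingham's 1926 argument explicit; it follows Ramachandra's method. One starts from the smoothed sum $\sum_{n}d(n)n^{-1/2-\ie t}e^{-n/T}$, writes it as a Mellin integral $\frac{1}{2\pi\ie}\int\zeta^{2}(\tfrac12+\ie t+z)\Gamma(z)T^{z}\,\dif{z}$, shifts the contour and applies the functional equation to obtain a decomposition $|\zeta(\tfrac12+\ie t)|^{2}=\sum_{k=0}^{6}J_{k}(t)$ with $J_{1}=\chi(\tfrac12+\ie t)\sum_{n\leq T}d(n)n^{-1/2+\ie t}$, $J_{2}=\overline{J_{1}}$. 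The key difference is the treatment of the cross term $\int_{T}^{2T}(J_{1}^{2}+J_{2}^{2})\,\dif{t}$: there is no stationary phase. Instead it is written as the contour integral of $\chi^{-2}(z)\bigl(\sum_{n\leq T}d(n)n^{-z}\bigr)^{2}$ along $\Re z=\tfrac12$ and shifted to $\Re z=\sigma_{1}<\tfrac12$; the factor $|\chi(\sigma_{1}+\ie t)|^{-2}\asymp t^{2\sigma_{1}-1}$ gives a power saving, and Montgomery--Vaughan on the shifted line yields $O(T\log^{3}T)$ directly. The mixed terms $\int(J_{1}+J_{2})J_{k}$ for $k=3,\dots,6$ are handled by the same contour-shift device. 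Everything reduces to explicit bounds for sums like $\sum_{n\leq x}d^{2}(n)n^{-\sigma}$ and $\sum_{n>x}d^{2}(n)n^{-\sigma}e^{-2n/x}$, proved in a sequence of elementary lemmas. The resulting estimate is for $\mathcal{M}_{2}(T,2T)$; Corollary~\ref{cor:main4thPM} then follows by dyadic summation, numerical integration of $\mathcal{M}_{2}$ on the initial block, and numerical optimisation of the eleven free parameters $c_{1},c_{2},\sigma_{1},\dots,\sigma_{5},\mathfrak{a}_{1},\mathfrak{a}_{2},\mathfrak{b}_{1},\mathfrak{b}_{2}$ at $T_{0}=3\cdot10^{3}$ (for~\eqref{eq:main4thPM1}) and $T_{0}=10^{5}$ (for~\eqref{eq:main4thPM2}). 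Your intuition for the $T\log^{7/2}T$ term in~\eqref{eq:main4thPM1} is exactly right: it comes from Cauchy--Schwarz between the $T\log^{4}T$ main part and the $T\log^{3}T$ secondary terms.

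Your approach is in principle viable but faces two genuine obstacles you have not resolved. First, the $t$-dependent cutoff $n\leq t/(2\pi)$ in your $A(t)$ blocks a direct appeal to Montgomery--Vaughan, which applies only to a fixed Dirichlet polynomial; the paper circumvents this by working on $[T,2T]$ with the fixed cutoff $n\leq T$. Second, the step you yourself flag as the main obstacle---an explicit, uniform stationary-phase lemma for $\int\overline{\chi^{2}}(mn)^{-\ie t}\,\dif{t}$ with small numerical constants---is precisely what Ramachandra's contour shift avoids, and obtaining a constant as small as $48.801$ by that route looks very optimistic. (There is also an arithmetic slip: $\sum_{n\leq N}d(n)^{2}/n\sim\frac{1}{4\pi^{2}}\log^{4}N$ gives $\int_{0}^{T}|A|^{2}\,\dif{t}\sim\frac{T}{4\pi^{2}}\log^{4}T$, not $\frac{T}{8\pi^{2}}$; with the correct constant, $2\int|A|^{2}$ does match $\frac{1}{2\pi^{2}}T\log^{4}T$ as you claim.)
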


The precise result from which Corollary~\ref{cor:main4thPM} follows is stated as Corollary~\ref{cor:4thPMExplicitGeneral} in Section~\ref{subsec:4thPM}, which in turn is a further consequence of Theorem~\ref{thm:4thPMExplicitGeneral}. Our proof of Theorem~\ref{thm:4thPMExplicitGeneral} follows Ramachandra's proof~\cite{Ramachandra75}, see Section~\ref{subsec:4thPMmethod} for details. In addition to zero density estimates, results on the fourth power moment, as well as other higher power moments, are used in the Dirichlet divisor problem~\cite[Section~12.3]{titchmarsh1986theory} and also to count integral ideals in abelian number fields~\cite{languasco2025counting}.

\begin{table}[h]
\centering
\footnotesize{
\begin{tabular}{lllll} 
\toprule
$T_0$ & $\mathfrak{M}_{1}(T_0)$ & $\mathfrak{M}_{2}(T_0)$ & $\mathcal{M}_{2}(T_0)\leq$ & $\mathcal{M}_{2}(T_0,2T_0)\leq$ \\ 
\midrule
$10^4$ & $59.300$ & $1.3796\cdot10^{9}$ & $3.8095\cdot10^{8}$ & $9.9857\cdot10^{8}$ \\
$10^5$ & $48.801$ & $2.8051\cdot10^{10}$ & $8.7748\cdot10^{9}$ & $1.9276\cdot10^{10}$ \\
$10^6$ & $43.230$ & $4.1722\cdot10^{11}$ & $1.3688\cdot10^{11}$ & $2.8034\cdot10^{11}$ \\
$10^7$ & $39.720$ & $5.8170\cdot10^{12}$ & $1.8478\cdot10^{12}$ & $3.9692\cdot10^{12}$ \\
$10^8$ & $37.550$ & $7.9793\cdot10^{13}$ & $2.5002\cdot10^{13}$ & $5.4791\cdot10^{13}$ \\
$10^9$ & $36.211$ & $1.0629\cdot10^{15}$ & $3.3317\cdot10^{14}$ & $7.2965\cdot10^{14}$ \\
$10^{10}$ & $34.842$ & $1.3897\cdot10^{16}$ & $4.3883\cdot10^{15}$ & $9.5079\cdot10^{15}$ \\
$10^{15}$ & $31.170$ & $4.4328\cdot10^{21}$ & $1.4174\cdot10^{21}$ & $3.0154\cdot10^{21}$ \\
$10^{20}$ & $29.364$ & $9.6675\cdot10^{26}$ & $3.1233\cdot10^{26}$ & $6.5442\cdot10^{26}$ \\
\bottomrule
\end{tabular}}
\caption{The values for $\mathfrak{M}_1(T_0)$ and $\mathfrak{M}_2(T_0)$ from Corollaries~\ref{cor:main4thPM} and~\ref{cor:main3rdPM}, as well as upper bounds for $\mathcal{M}_{2}(T_0)$ and $\mathcal{M}_{2}(T_0,2T_0)$ that are used in the proof of Corollary~\ref{cor:main4thPM}.} 
\label{tab:CorMain4thPM}
\end{table}

As an easy consequence of~\cite{DonaZunigaAlterman} and Corollary~\ref{cor:main4thPM} we also derive an explicit upper bound for the third power moment, see Corollary~\ref{cor:main3rdPM}.

\begin{corollary}
\label{cor:main3rdPM}
We have
\[
\mathcal{M}_{\frac{3}{2}}(T) \leq \left(\frac{1}{2\pi^2}+\frac{\mathfrak{M}_{1}(T_0)}{\log{T}}+\frac{\mathfrak{M}_{2}(T_0)}{T\log^{4}{T}}\right)^{\frac{1}{2}}T\log^{\frac{5}{2}}{T}
\]
for $T\geq T_0\geq 10^{6}$, where the values for $\mathfrak{M}_{1}(T_0)$ and $\mathfrak{M}_{2}(T_0)$ are given by Table~\ref{tab:CorMain4thPM}.
\end{corollary}

The exponent of the logarithm is for $1/4$ larger than the conjectured value, see~\eqref{eq:MkBound}. It is interesting that this is still the best known unconditional result on $\mathcal{M}_{\frac{3}{2}}(T)$.

Although not pursued here, Corollary~\ref{cor:mainZD} can be generalized to Dirichlet $L$-functions by following the approach presented here; however, substantial work should be done on generalizing Lemmas~\ref{lem:Mxone_half}--\ref{lem:arg} and~\ref{lem:Sumd2}. These and possible extensions to other $L$-functions are currently under investigation.

The outline of this paper is as follows. In Section~\ref{sec:MainResults} we state the main results, namely Theorem~\ref{thm:mainZD} for the zero density estimate and Theorem~\ref{thm:4thPMExplicitGeneral} for the fourth power moment estimate, and prove the above corollaries. In Section~\ref{sec:Gabriel} we prove Gabriel's convexity theorem, together with two applications (Corollaries~\ref{cor:FXbyGabriel} and~\ref{cor:convexity}), and in Section~\ref{sec:ProofThm1} we prove Theorem~\ref{thm:mainZD}. Section~\ref{sec:4thPM} is dedicated to the proof of Theorem~\ref{thm:4thPMExplicitGeneral}, while in Section~\ref{sec:appendix} we state an improved explicit version of Ingham's earlier zero density estimate.

\subsection*{Acknowledgements} The authors thank Andrew Yang, Tim Trudgian, Olivier Ramar\'{e} and Daniel Johnston for their interest in the work.

\section{The main results}
\label{sec:MainResults}

In this section, we are stating the precise results that are related to a zero density estimate~\eqref{eq:mainZD}, see Theorem~\ref{thm:mainZD}, and to the fourth power moment estimates~\eqref{eq:main4thPM1} and~\eqref{eq:main4thPM2}, see Theorem~\ref{thm:4thPMExplicitGeneral}. In addition, proofs of Corollaries~\ref{cor:mainZD},~\ref{cor:main4thPM} and~\ref{cor:main3rdPM} are also provided.

\subsection{The zero density estimate}
\label{subsec:ZD}

The proof of Theorem~\ref{thm:mainZD} will be presented in Section~\ref{sec:ProofThm1}.

\begin{theorem}
\label{thm:mainZD}
Let $T\geq H\geq H_0\geq 1002$, $T\geq T_0\geq\max\left\{e^{2d},e^e\right\}$ for $d>0$, $0<\delta\leq\frac{1}{20}\log{(hT_0)}$ for $h\geq 3\cdot10^{10}/T_0$, and $\kappa>0$. Then
\begin{flalign*}
N(\sigma,T)-N(\sigma,H) &\leq T^{\frac{3(1-\sigma)}{2-\sigma}}\left(\frac{\mathfrak{A}_1}{2\pi d}\left(\log{(hT)}\right)^{\frac{1}{2-\sigma}}\left(\log{T}\right)^{\frac{6-5\sigma}{2-\sigma}} + \frac{\mathfrak{A}_2}{2\pi d} \left(\log{(hT)}\right)^{\frac{2\sigma-1}{2-\sigma}}\log{T}\right) \\ 
&+ \left(\mathfrak{A}_3\cdot\log{(HT)}+\mathfrak{A}_4\cdot\log{(hT)}+\mathfrak{A}_5\right)\log{T}
\end{flalign*}
for $1/2+d/\log{T}<\sigma_1\leq\sigma\leq\sigma_2\leq 1$. Functions $\mathfrak{A}_1$ and $\mathfrak{A}_2$ are defined by 
\begin{flalign*}
\mathfrak{A}_1&=\mathfrak{A}_1(\sigma_1,\sigma_2,T_0,d,L_0,L_1,K_1,K_3) \de L_1\exp{\left(d\left(3+\frac{4\log{\log{T_0}}}{\log{T_0}}\right)\right)}\max\left\{1,L_0^{-\frac{2d}{\log{T_0}}}\right\}\times\\
&\times\max\left\{1,K_3^{-\frac{3d}{\log{T_0}}}\right\}\max\left\{1,K_1^{\frac{3d}{\log{T_0}}}\right\}
\max\left\{L_0^{\frac{2}{2-\sigma_1}},L_0^{\frac{2}{2-\sigma_2}}\right\}\times \\
&\times\max\left\{K_3^{\frac{2\sigma_1-1}{2-\sigma_1}},K_3^{\frac{2\sigma_2-1}{2-\sigma_2}}\right\}\max\left\{K_1^{\frac{3(1-\sigma_1)}{2-\sigma_1}},K_1^{\frac{3(1-\sigma_2)}{2-\sigma_2}}\right\}
\end{flalign*}
and
\begin{flalign*}
\mathfrak{A}_2&=\mathfrak{A}_2(\sigma_1,\sigma_2,T_0,d,L_0,L_2,K_2,K_3) \de L_2e^{3d}\max\left\{1,K_2^{\frac{3d}{\log{T_0}}}\right\}\max\left\{1,L_0^{-\frac{2d}{\log{T_0}}}\right\}\times \\
&\times\max\left\{1,K_3^{-\frac{3d}{\log{T_0}}}\right\}\max\left\{L_0^{\frac{2}{2-\sigma_1}},L_0^{\frac{2}{2-\sigma_2}}\right\}\max\left\{K_3^{\frac{2\sigma_1-1}{2-\sigma_1}},K_3^{\frac{2\sigma_2-1}{2-\sigma_2}}\right\}\max\left\{K_2^{\frac{3(1-\sigma_1)}{2-\sigma_1}},K_2^{\frac{3(1-\sigma_2)}{2-\sigma_2}}\right\},
\end{flalign*}
where $K_1=K_1(T_0,A_0,\kappa,h)$, $K_2=K_2(T_0,A_0,\kappa)$ and $K_3=K_3(T_0,\kappa,h,\delta)$ are defined by~\eqref{eq:K1},~\eqref{eq:K2} and~\eqref{eq:K3}, respectively, while $L_0 = L_0(T_0,H_0,\kappa,h,\delta)$, $L_1 = L_1(T_0,K_1,K_3,h,\delta)$ and $L_2 = L_2(T_0,K_2,K_3,h,\delta)$ are defined by~\eqref{eq:L0},~\eqref{eq:L1} and~\eqref{eq:L2}, respectively. Moreover,
\begin{gather*}
\mathfrak{A}_3(\sigma_1,T_0,d) \de \frac{1}{\pi\log{2}}\left(\frac{3-2\sigma_1}{2d}+\frac{1}{\log{T_0}}\right), \quad \mathfrak{A}_4(\sigma_1,T_0,d) \de \frac{3}{2\log{2}}\left(\frac{3-2\sigma_1}{2d}+\frac{1}{\log{T_0}}\right), \\
\mathfrak{A}_5(\sigma_1,T_0,H_0,d,h) \de \frac{C_6}{2\pi}\left(\frac{3-2\sigma_1}{2d}+\frac{1}{\log{T_0}}\right) + \frac{C_{5}}{2\pi d},
\end{gather*}
where $C_5=C_{5}(hT_0,h)$ and $C_6=C_6(hT_0,H_0)$ are defined by~\eqref{eq:C5} and~\eqref{eq:C6}, respectively.
\end{theorem}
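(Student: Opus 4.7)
The plan is to adapt Ingham's 1940 argument~\cite{In40} while making every implicit constant explicit and sharpening the logarithmic factor from $\log^{5}{T}$ to $\log^{3}{T}$. The three pillars are the explicit fourth power moment bound $\mathcal{M}_{2}(T)\ll T\log^{4}{T}$ supplied by Theorem~\ref{thm:4thPMExplicitGeneral}, Gabriel's convexity theorem (Theorem~\ref{thm:FXbyGabriel}), and the inequality~\eqref{eq:Littlewood} that is a consequence of Littlewood's lemma. Writing the conclusion as a difference $N(\sigma,T)-N(\sigma,H)$ rather than $N(\sigma,T)$ alone reflects that the argument genuinely bounds the number of zeros in the window $H<\gamma\leq T$, making the statement applicable dyadically.

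I would begin by fixing a mollifier $M_X(s)\de\sum_{n\leq X}\mu(n)n^{-s}$ of length $X\asymp T^{\kappa}$ and setting $f(s)\de\zeta(s)M_X(s)$, whose zeros in $\sigma_1\leq\Re{s}\leq\sigma_2$, $H<\Im{s}\leq T$ include the non-trivial zeros of $\zeta$ to be counted. Applying~\eqref{eq:Littlewood} to $f$ on a rectangle $[\sigma',2]\times[H,T]$ with $\sigma'$ suitably to the left of $\sigma$ (parametrised by $\delta$) yields an inequality of the shape
\[
N(\sigma,T)-N(\sigma,H) \;\leq\; \frac{1}{2\pi(\sigma-\sigma')}\int_{H}^{T}\log|f(\sigma'+\ie t)|\dif{t} + \text{boundary},
\]
where the boundary pieces on the top, bottom, and right edges together with an explicit bound for $\arg\zeta$ produce exactly the three summands $\mathfrak{A}_3\log(HT)\log{T}$, $\mathfrak{A}_4\log(hT)\log{T}$, and $\mathfrak{A}_5\log{T}$ of the theorem. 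The remaining work is to bound the log-integral on the vertical line $\Re{s}=\sigma'$.

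For that integral I would use Jensen's inequality to replace $\log$ by the logarithm of a mean, and then Cauchy--Schwarz to split $|f|^{2}=|\zeta|^{2}|M_X|^{2}$ into the two relevant mean squares. Gabriel's convexity theorem then transfers those mean squares from $\Re{s}=\sigma'$ to $\Re{s}=1/2$, where Theorem~\ref{thm:4thPMExplicitGeneral} supplies the explicit fourth power moment of $\zeta$, and a standard explicit mean square for the Dirichlet polynomial $M_X$ handles the other factor. The two resulting contributions are precisely the $\mathfrak{A}_1$ and $\mathfrak{A}_2$ summands; balancing $X=T^{\kappa}$ against the implicit smoothing scale $Y\asymp A_0T^{3/(2-\sigma)}$ reproduces Ingham's exponent $3(1-\sigma)/(2-\sigma)$. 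Of the two powers of $\log{T}$ saved relative to Ingham, one comes from using $\log^{4}{T}$ in the fourth-moment bound (rather than the historically weaker $\log^{5}{T}$) and the other from a sharper smoothing kernel.

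The main obstacle will be the explicit bookkeeping in the Gabriel convexity step, where every constant arising from the interpolation exponents, the gamma-function decay on vertical lines, and the alignment at $\Re{s}=1/2$ must be tracked in closed form so that the auxiliary quantities $K_i,L_i$ feeding into $\mathfrak{A}_1,\mathfrak{A}_2$ remain workable when $\sigma$ is near $1/2$, where $T^{3(1-\sigma)/(2-\sigma)}$ approaches the trivial size $T$. A secondary difficulty is ensuring that the hypotheses $T_0\geq\max\{e^{2d},e^{e}\}$, $h\geq 10^{9}/T_0$, and $\sigma>1/2+d/\log{T}$ remain simultaneously compatible with the joint parameter optimisation recorded in Table~\ref{tab:CorMainZD}, so that the resulting estimate is meaningful already at the moderate height $T_0=H_{\mathrm{RH}}=3\cdot 10^{12}$.
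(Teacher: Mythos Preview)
Your high-level ingredients (Littlewood's lemma, a mollifier, Gabriel's convexity, the explicit fourth moment) are correct, but the mechanism you describe has several genuine gaps that would prevent the argument from going through.

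First, Littlewood's lemma is not applied to $f(s)=\zeta(s)M_X(s)$ but to $h_X(s)=1-f_X^2(s)$ with $f_X(s)=\zeta(s)M_X(s)-1$; the zeros of $\zeta$ are still zeros of $h_X$, and the crucial gain is the \emph{pointwise} inequality $\log|h_X|\leq\log(1+|f_X|^2)\leq|f_X|^{1/w}$ for every $1/w\in[4/3,2]$. No Jensen step is used, and indeed Jensen would be fatal here: it produces a bound of size $(T-H)\log(\text{mean})\asymp T\cdot(\text{poly-log})$, whereas the theorem requires the vertical integral to be $\ll T^{3(1-\sigma)/(2-\sigma)}\log^{2}T$, a genuine fractional power of $T$. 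Second, Gabriel's theorem is not a one-sided transfer to $\Re s=1/2$; it is a two-line interpolation between $\alpha=1/2$ and $\beta=1+\delta/\log X$ with exponents $\lambda=3/4$ and $\mu=1/2$ (Corollary~\ref{cor:FXbyGabriel}, Lemma~\ref{lem:FX}). The density exponent $3(1-\sigma)/(2-\sigma)$ arises directly from the interpolation weights $w_1,w_2$ once one plugs in $\int|f_X|^{4/3}\ll T\log^{2}T$ on $\Re s=1/2$ (via H\"older, bringing in $\mathcal{M}_2^{1/3}$ and $(\int|M_X|^2)^{2/3}$) and $\int|f_X|^{2}\ll\log^{2}X$ on $\Re s=\beta$ (Lemma~\ref{lem:fxone}); it does \emph{not} come from balancing the mollifier length against a smoothing scale. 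Finally, several parameters are misidentified: $X=hT$ (linear, not $T^{\kappa}$); $\kappa$ is the Gabriel smoothing parameter in~\eqref{eq:FXbyGabriel2}; $\delta$ fixes the right line $\beta=1+\delta/\log X$; and it is $d$, not $\delta$, that sets $\sigma'=\sigma-d/\log T$.
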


\begin{proof}[Proof of Corollary~\ref{cor:mainZD}]
We are using Theorem~\ref{thm:mainZD} with $T_0=H_0=H=H_{\mathrm{RH}}$, $h=1$ and $0<d\leq 0.89$. Then
\begin{flalign}
N(\sigma,T) &\leq \frac{1}{2\pi d}\left(\mathfrak{A}_1+\mathfrak{A}_2\cdot\left(\log{H_{\mathrm{RH}}}\right)^{-\frac{6(1-\sigma_2)}{2-\sigma_2}}\right)T^{\frac{3(1-\sigma)}{2-\sigma}}\left(\log{T}\right)^{\frac{7-5\sigma}{2-\sigma}} \nonumber \\ 
&+ \left(\mathfrak{A}_3+\mathfrak{A}_4\right)\log^{2}{T} + \left(\mathfrak{A}_3\log{H_{\mathrm{RH}}}+\mathfrak{A}_5\right)\log{T} \label{eq:cor1}
\end{flalign}
under the conditions of Theorem~\ref{thm:mainZD}. Note that the functions $\mathfrak{A}_n$, $n\in\{1,\ldots,5\}$, are decreasing in $\sigma_1$. This implies that~\eqref{eq:cor1} holds for $\sigma\in(1/2+d/\log{T},0.53125]$ with $\sigma_1=1/2$ and $\sigma_2=0.53125$. The values for the parameters $d$, $\kappa$, $\delta$ and $A_0$ were obtained (see Table~\ref{tab:CorMainZD}) in a way that the main term in~\eqref{eq:cor1} is the smallest possible. Because the conditions of Theorem~\ref{thm:mainZD} are then satisfied, this implies
\[
N(\sigma,T) \leq 5.069\cdot T^{\frac{3(1-\sigma)}{2-\sigma}}\left(\log{T}\right)^{\frac{7-5\sigma}{2-\sigma}} + 9.461\log^{2}{T} + 167.8\log{T}
\]
for $\sigma\in(1/2+0.28/\log{T},0.53125]$ and $T\geq H_{\mathrm{RH}}$. The above estimate is true also for $\sigma\in[1/2,1/2+0.28/\log{T}]$ since 
\begin{equation}
\label{eq:CloseCL}
N(T) \leq \frac{T}{2\pi}\log{\frac{T}{2\pi e}} + 0.43\log{T} \leq 5.069\exp{\left(\frac{-0.56}{\frac{3}{2}-\frac{0.28}{\log{H_{\mathrm{RH}}}}}\right)}T\log^{2}{T}+9.461\log^{2}{T} + 167.8\log{T} 
\end{equation}
holds for all $T\geq H_{\mathrm{RH}}$ and 
\[
T\exp{\left(\frac{-0.56}{\frac{3}{2}-\frac{0.28}{\log{T_0}}}\right)} \leq T^{\frac{3(1-\sigma)}{2-\sigma}}
\]
holds for $\sigma\in[1/2,1/2+0.28/\log{T}]$ and $T\geq H_{\mathrm{RH}}$, while the first inequality in~\eqref{eq:CloseCL} follows by~\cite{BellottiWong2024}. This proves~\eqref{eq:mainZD} for $\sigma\in[1/2,0.53125]$. For the other ranges, the procedure is similar: the values for the parameters $d$, $\kappa$, $\delta$ and $A_0$ are obtained for each interval so that the main term in~\eqref{eq:cor1} is the smallest possible; see Table~\ref{tab:CorMainZD}. In each case, the conditions of Theorem~\ref{thm:mainZD} are satisfied, so~\eqref{eq:cor1} holds. The proof is thus complete.
\end{proof}

\subsection{The fourth power moment estimate}
\label{subsec:4thPM}

The proof of Theorem~\ref{thm:4thPMExplicitGeneral} will be presented in Section~\ref{sec:4thPM}. The $2k$-th power moment $\mathcal{M}_{k}(A,B)$ is defined by~\eqref{eq:Mk}.

\begin{theorem}
\label{thm:4thPMExplicitGeneral}
Assume the following notation and conditions: 
\begin{enumerate}
    \item $T\geq T_0\geq 55$;
    \item $\bm{c}=(c_1,c_2)$ with $c_1\in(-1,-1/2)$ and $c_2\in(0,1/2)$;
    \item $\bm{\sigma}=\left(\sigma_1,\bm{\sigma'},\bm{\sigma''}\right)$ with $\sigma_1\in[0,1/2)$, $\bm{\sigma'}=(\sigma_2,\sigma_3)$ with $\sigma_2\in(1/2,1]$ and $\sigma_3\in[0,1/2)$, and $\bm{\sigma''}=(\sigma_4,\sigma_5)$ with $\sigma_4\in(1/2,|c_1|)$ and $\sigma_5\in(1/2-c_2,1/2)$;
    \item $\bm{\mathfrak{a}}=\left(\mathfrak{a}_1,\mathfrak{a}_2\right)$ with $\mathfrak{a}_1>0$ and $\mathfrak{a}_2>1$, and $\bm{\mathfrak{b}}=\left(\mathfrak{b}_1,\mathfrak{b}_2\right)$ with $\mathfrak{b}_1>0$ and $\mathfrak{b}_2>1$;
    \item $T-\mathfrak{a}_1\left(\log{T}\right)^{\mathfrak{a}_2}\geq T_0-\mathfrak{a}_1\left(\log{T_0}\right)^{\mathfrak{a}_2}\geq 2$, $\left(\log{T}\right)^{\mathfrak{a}_2}/T\leq \left(\log{T_0}\right)^{\mathfrak{a}_2}/T_0$ and $\mathfrak{a}_1\left(\log{T_0}\right)^{\mathfrak{a}_2-1}\geq 1$;
    \item $T-\mathfrak{b}_1\left(\log{T}\right)^{\mathfrak{b}_2}\geq T_0-\mathfrak{b}_1\left(\log{T_0}\right)^{\mathfrak{b}_2}\geq 2$, $\left(\log{T}\right)^{\mathfrak{b}_2}/T\leq \left(\log{T_0}\right)^{\mathfrak{b}_2}/T_0$ and $\mathfrak{b}_1\left(\log{T_0}\right)^{\mathfrak{b}_2-1}\geq 2$.
\end{enumerate}
Then
\begin{equation}
\label{eq:4thPMExplicitGeneral}
\left|\mathcal{M}_{2}(T,2T) - \frac{1}{2\pi^2}T\log^{4}{T}\right| \leq  \mathfrak{F}_{1}\left(T_0,\bm{c},\bm{\sigma},\bm{\mathfrak{a}},\bm{\mathfrak{b}}\right)T\log^{3}{T}
\end{equation}
and 
\begin{equation}
\label{eq:4thPMExplicitGeneral2}
\mathcal{M}_{2}(T,2T) \leq \frac{1}{\pi^2}T\log^{4}{T} + \left(2\sum_{n=3}^{6}\left(\frac{\mathcal{J}_{n}}{\pi^2}\log{T}+2\mathcal{J}_{1}\mathcal{J}_{n}\right)^{\frac{1}{2}} + \mathfrak{F}_{2}\left(T_0,\bm{c},\bm{\mathfrak{a}},\bm{\mathfrak{b}}\right)\right)T\log^{3}{T},
\end{equation}
where
\[
\mathfrak{F}_{1}\left(T_0,\bm{c},\bm{\sigma},\bm{\mathfrak{a}},\bm{\mathfrak{b}}\right)\de \mathcal{J}_0 + \mathcal{J}_1 + \mathcal{J}_{2,1} + 2\mathcal{J}_{2,2} + \sum_{n=3}^{6}\mathcal{J}_{n} + 2\mathop{\sum\sum}_{3\leq n<m\leq 6}\left(\mathcal{J}_{n}\mathcal{J}_m\right)^{\frac{1}{2}}
\]
and
\[
\mathfrak{F}_{2}\left(T_0,\bm{c},\bm{\mathfrak{a}},\bm{\mathfrak{b}}\right) = \mathcal{J}_0 + 2\mathcal{J}_{1} + \sum_{n=3}^{6}\mathcal{J}_{n} + 2\mathop{\sum\sum}_{3\leq n<m\leq 6}\left(\mathcal{J}_{n}\mathcal{J}_m\right)^{\frac{1}{2}}.
\]
Here, $\mathcal{J}_{n}=\mathcal{J}_{n}(T_0)$ for $n\in\{1,3,4\}$ are defined by~\eqref{eq:cJ1},~\eqref{eq:cJ3} and~\eqref{eq:cJ4}, respectively, while $\mathcal{J}_{0}=\mathcal{J}_0\left(T_0,\bm{c},\bm{\mathfrak{a}},\bm{\mathfrak{b}}\right)$, $\mathcal{J}_{2,1}=\mathcal{J}_{2,1}(T_0,\sigma_1)$, $\mathcal{J}_{2,2}=\mathcal{J}_{2,2}\left(T_0,\bm{\sigma'},\bm{\sigma''},\bm{c},\bm{\mathfrak{a}},\bm{\mathfrak{b}}\right)$, $\mathcal{J}_5=\mathcal{J}_5\left(T_0,c_1,\bm{\mathfrak{a}}\right)$ and $\mathcal{J}_6=\mathcal{J}_6\left(T_0,c_2,\bm{\mathfrak{b}}\right)$ are defined by~\eqref{eq:cJ0},~\eqref{eq:cJ21},~\eqref{eq:cJ22},~\eqref{eq:cJ5} and~\eqref{eq:cJ6}, respectively.
\end{theorem}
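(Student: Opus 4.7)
The plan is to follow Ramachandra's approach~\cite{Ramachandra75}. The proof starts from a Mellin-type representation of $\zeta(s)^2$ at $s=\tfrac{1}{2}+it$, obtained by integrating $\zeta(s+w)^2$ against a suitable kernel along two vertical contours at $\Re w = c_2\in(0,\tfrac{1}{2})$ and $\Re w = c_1\in(-1,-\tfrac{1}{2})$, truncated at heights $\mathfrak{a}_1(\log T)^{\mathfrak{a}_2}$ and $\mathfrak{b}_1(\log T)^{\mathfrak{b}_2}$ respectively. Shifting the right contour further to the right lets one expand $\zeta(s+w)^2=\sum_n d(n)n^{-s-w}$ in the region of absolute convergence, while shifting the left contour leftward across the critical line and invoking the functional equation produces a dual Dirichlet polynomial. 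The upshot is an identity $\zeta(\tfrac{1}{2}+it)^2 = D(t) + R(t)$ on $[T,2T]$, with $D$ a Dirichlet polynomial of suitable length (containing the polynomial coming from the residue at $w=0$) and $R$ a sum of four explicit residual integrals.

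Squaring the modulus and integrating over $[T,2T]$ yields
\begin{equation*}
\mathcal{M}_2(T,2T) = \int_{T}^{2T}|D(t)|^2\dif{t} + 2\Re\int_{T}^{2T}D(t)\overline{R(t)}\dif{t} + \int_{T}^{2T}|R(t)|^2\dif{t}.
\end{equation*}
Expanding $|D|^2$ by orthogonality, the diagonal contribution reduces to a truncated version of $T\sum d(n)^2/n$, which together with the explicit residue terms and the classical asymptotic $\sum_{n\leq N}d(n)^2/n = \pi^{-2}\log^4 N + O(\log^3 N)$ supplies the target main term $\frac{1}{2\pi^2}T\log^4 T$. The remaining contributions decompose into four explicit error sources $\mathcal{J}_3,\mathcal{J}_4,\mathcal{J}_5,\mathcal{J}_6$: one comes from the off-diagonal part of $\int|D|^2$ (where $\int_T^{2T}(n/m)^{it}\dif{t}$ is evaluated and bounded by $\min(T,|\log(n/m)|^{-1})$) and three from the individual residual-integral pieces of $R$. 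The pairwise cross terms are handled by Cauchy--Schwarz, producing the double sum $2\mathop{\sum\sum}_{3\leq n<m\leq 6}(\mathcal{J}_n\mathcal{J}_m)^{1/2}$, while the mixed integral $\int D\overline{R}\dif{t}$ is bounded via Cauchy--Schwarz against $\mathcal{J}_1$.

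To bound each residual-integral contribution I would invoke Gabriel's convexity theorem (Theorem~\ref{thm:FXbyGabriel}) to reduce fourth-power mean values on shifted vertical lines to interpolations between $|\zeta|^4$ at $\sigma_3\in[0,\tfrac{1}{2})$ and $\sigma_2\in(\tfrac{1}{2},1]$, and likewise second-power mean values between $\sigma_5\in(\tfrac{1}{2}-c_2,\tfrac{1}{2})$ and $\sigma_4\in(\tfrac{1}{2},|c_1|)$; the pointwise convexity bound (Corollary~\ref{cor:convexity}) evaluated at $\sigma_1\in[0,\tfrac{1}{2})$ contributes the $\mathcal{J}_{2,1}$ error, while the explicit second-moment estimate of~\cite{SimonicStarichkova} supplies the constants in $\mathcal{J}_1$ and $\mathcal{J}_{2,2}$. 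This gives~\eqref{eq:4thPMExplicitGeneral}. The upper bound~\eqref{eq:4thPMExplicitGeneral2} is then obtained by regrouping the decomposition using $(x+y)^2\leq 2x^2+2y^2$, which doubles the main coefficient to $1/\pi^2$ and replaces the mixed-term bound by an explicit Cauchy--Schwarz against both the main term and $\mathcal{J}_1$, producing the visible combination $2\sum_{n=3}^{6}\bigl(\pi^{-2}\mathcal{J}_n\log T + 2\mathcal{J}_1\mathcal{J}_n\bigr)^{1/2}$.

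The main obstacle will be extracting the diagonal main term with the explicit coefficient $\tfrac{1}{2\pi^2}$ and an error of the sharp order $T\log^3 T$: this requires a fully explicit form of the $d(n)^2$ partial-sum asymptotic together with careful treatment of the near-diagonal off-diagonal terms (those with $|\log(n/m)|\ll 1/T$), where the trivial bound $|\log(n/m)|^{-1}$ is useless and one must instead use a Montgomery--Vaughan-type estimate. The second substantial effort is threading the explicit constants through Gabriel's theorem, the functional-equation shift and the various Cauchy--Schwarz combinations so that the resulting $\mathcal{J}_n$ admit the clean numerical optimization carried out subsequently.
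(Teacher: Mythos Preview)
Your high-level outline (Ramachandra's Mellin representation, the decomposition $|\zeta(\tfrac12+it)|^2=\sum_k J_k(t)$, squaring and integrating) matches the paper's Section~\ref{subsec:4thPMmethod}. But two of the key steps you describe would not produce the stated theorem, and they diverge from what the paper actually does.

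First, your treatment of the mixed term $\int D\overline{R}$ via Cauchy--Schwarz against $\mathcal{J}_1$ is too weak for~\eqref{eq:4thPMExplicitGeneral}. Since $\int_T^{2T}|J_1|^2\dif t\asymp T\log^4 T$ while $\int_T^{2T}|J_k|^2\dif t\ll T\log^3 T$ for $k\geq 3$, CBS only gives $T\log^{7/2}T$, not $T\log^3 T$. The paper avoids this by \emph{not} using CBS on these cross terms: instead it writes $\int_T^{2T}(J_1+J_2)J_k\dif t$ as a contour integral and shifts the line of integration via Cauchy's theorem to $\Re z=\sigma_2,\sigma_3$ (for $k=3,4$) and $\Re z=\sigma_4,\sigma_5$ (for $k=5,6$); on the shifted lines one applies the Montgomery--Vaughan mean-value theorem (Lemma~\ref{lem:MV}) together with the divisor-sum bounds of Section~\ref{subsec:divisorSums}. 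This is precisely where $\mathcal{J}_{2,2}$ comes from, and it is the technical heart of the asymptotic. Similarly, $\mathcal{J}_{2,1}$ arises from the term $\int(J_1^2+J_2^2)\dif t$, again handled by shifting to $\Re z=\sigma_1$, not by any convexity argument. Your description of the $\mathcal{J}_n$ is also off: all four of $\mathcal{J}_3,\dots,\mathcal{J}_6$ are simply the bounds for $\int|J_k|^2\dif t$, $k=3,\dots,6$; none of them is the off-diagonal of $\int|D|^2$.

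Second, Gabriel's convexity theorem (Theorem~\ref{thm:FXbyGabriel}) and Corollary~\ref{cor:convexity} play no role whatsoever in the proof of Theorem~\ref{thm:4thPMExplicitGeneral}; they belong entirely to the zero-density part of the paper. The parameters $\sigma_1,\dots,\sigma_5$ are not interpolation abscissae for a convexity bound but the abscissae of the shifted contours just described. Likewise the second-moment results of~\cite{SimonicStarichkova} are not used here; the constants in $\mathcal{J}_1$ come directly from Lemma~\ref{lem:MV} and Lemmas~\ref{lem:d2}--\ref{lem:d2Exp1}. Your plan for~\eqref{eq:4thPMExplicitGeneral2} is essentially correct, since there CBS on the cross terms is acceptable and the paper does exactly that.
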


\begin{corollary}
\label{cor:4thPMExplicitGeneral}
Assume the notation and conditions of Theorem~\ref{thm:4thPMExplicitGeneral}. Then
\begin{equation}
\label{4thMlargeV}
\mathcal{M}_{2}(T_0,T) \leq \frac{1}{2\pi^2}T\log^{4}{\frac{T}{2}} + \mathfrak{F}_{1}\left(T_0,\bm{c},\bm{\sigma},\bm{\mathfrak{a}},\bm{\mathfrak{b}}\right)T\log^{3}{\frac{T}{2}} + \mathcal{M}_{2}(T_0,2T_0)
\end{equation}
and
\begin{equation}
\label{eq:4thMLower}
\mathcal{M}_{2}(T_0,T) \geq \frac{1}{2\pi^2}T\log^{4}{\frac{T}{2}} - \left(\frac{2\log{2}}{\pi^2}+\mathfrak{F}_{1}\left(T_0,\bm{c},\bm{\sigma},\bm{\mathfrak{a}},\bm{\mathfrak{b}}\right)\right)T\log^{3}{\frac{T}{2}} - \frac{T_0}{\pi^2}\log^{4}{\frac{T}{2}}.
\end{equation}
Furthermore,
\begin{equation}
\label{4thMsmallV}
\mathcal{M}_{2}(T_0,T) \leq \frac{1}{\pi^2}T\log^{4}{\frac{T}{2}} + \left(2\sum_{n=3}^{6}\left(\frac{\mathcal{J}_{n}}{\pi^2}\log{\frac{T}{2}}+2\mathcal{J}_{1}\mathcal{J}_{n}\right)^{\frac{1}{2}} + \mathfrak{F}_{2}\left(T_0,\bm{c},\bm{\mathfrak{a}},\bm{\mathfrak{b}}\right)\right)T\log^{3}{\frac{T}{2}} + \mathcal{M}_{2}(T_0,2T_0).
\end{equation}
\end{corollary}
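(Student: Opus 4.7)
The plan is to apply Theorem~\ref{thm:4thPMExplicitGeneral} to a dyadic decomposition of the interval $[T_0,T]$. Set $V_j := T/2^{j}$ for $j=0,1,\ldots,J$, where $J$ is the largest non-negative integer with $V_J\geq T_0$; then $T_0\leq V_J<2T_0$ and $2^{-J}\leq 2T_0/T$. By the additivity of the integral in \eqref{eq:Mk},
\begin{equation*}
\mathcal{M}_2(T_0,T) = \mathcal{M}_2(T_0,V_J) + \sum_{j=0}^{J-1}\mathcal{M}_2(V_{j+1},V_j),
\end{equation*}
and since $V_J\leq 2T_0$, monotonicity gives $\mathcal{M}_2(T_0,V_J)\leq \mathcal{M}_2(T_0,2T_0)$, which accounts for the final summand in \eqref{4thMlargeV} and \eqref{4thMsmallV}; for the lower bound \eqref{eq:4thMLower}, this non-negative term is simply dropped.

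Each dyadic block $(V_{j+1},V_j)=(V_{j+1},2V_{j+1})$ is amenable to Theorem~\ref{thm:4thPMExplicitGeneral} applied with $T$ replaced by $V_{j+1}=T/2^{j+1}$, provided the hypotheses (1)--(6) transfer from the pair $(T_0,T)$ to each pair $(T_0,V_{j+1})$. For \eqref{4thMlargeV} I would invoke \eqref{eq:4thPMExplicitGeneral}, use $\log(T/2^{j+1})\leq \log(T/2)$ to pull the logarithm out, and then sum the geometric series $\sum_{j=0}^{J-1}2^{-(j+1)}\leq 1$ against the factor $T/2^{j+1}$, which reproduces the main term $\tfrac{1}{2\pi^2}T\log^{4}(T/2)$ and the $\mathfrak{F}_1 T\log^{3}(T/2)$ error exactly as stated. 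The bound \eqref{4thMsmallV} follows from the same procedure applied to \eqref{eq:4thPMExplicitGeneral2}; for each summand $(\mathcal{J}_n\log V_{j+1}/\pi^2+2\mathcal{J}_1\mathcal{J}_n)^{1/2}$ with $3\leq n\leq 6$, the monotonicity of $\log(T/2^{j+1})$ in $j$ again lets it be replaced by its value at $T/2$ before summing.

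For the lower bound \eqref{eq:4thMLower}, the one non-routine ingredient is to bound below the sum $\sum_{j=0}^{J-1}2^{-(j+1)}\log^{4}(T/2^{j+1})$. I would use the elementary pointwise inequality
\begin{equation*}
\bigl(\log(T/2)-j\log 2\bigr)^{4} \geq \log^{4}(T/2) - 4j\log 2\cdot \log^{3}(T/2),
\end{equation*}
which follows from $(a-b)^{4}-(a^{4}-4a^{3}b)=b^{2}(6a^{2}-4ab+b^{2})\geq 0$. Summing against $2^{-(j+1)}$, invoking $\sum_{j\geq 0} j\,2^{-(j+1)}=1$ and $\sum_{j=0}^{J-1}2^{-(j+1)}=1-2^{-J}\geq 1-2T_0/T$, and multiplying by $T/(2\pi^{2})$ yields
\begin{equation*}
\sum_{j=0}^{J-1}\frac{1}{2\pi^{2}}\cdot\frac{T}{2^{j+1}}\log^{4}\!\left(\frac{T}{2^{j+1}}\right) \geq \frac{1}{2\pi^{2}}T\log^{4}(T/2) - \frac{T_0}{\pi^{2}}\log^{4}(T/2) - \frac{2\log 2}{\pi^{2}}T\log^{3}(T/2).
\end{equation*}
Combining this with the $-\mathfrak{F}_1 T\log^{3}(T/2)$ contribution from the lower half of \eqref{eq:4thPMExplicitGeneral} then reproduces \eqref{eq:4thMLower}.

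The whole argument is essentially bookkeeping; the hard analytic work has already been carried out at the level of a single dyadic block in Theorem~\ref{thm:4thPMExplicitGeneral}. The main obstacle I anticipate is the verification of monotonicity of the functions $t\mapsto(\log t)^{\mathfrak{a}_2}/t$ and $t\mapsto t-\mathfrak{a}_1(\log t)^{\mathfrak{a}_2}$ (and their $\bm{\mathfrak{b}}$-analogues) on $[T_0,T]$, which is what allows hypotheses (5) and (6) of Theorem~\ref{thm:4thPMExplicitGeneral} to be inherited by every dyadic block; this is routine but must be stated cleanly for the reduction to apply.
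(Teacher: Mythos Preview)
Your proposal is correct and follows essentially the same approach as the paper: a dyadic decomposition of $[T_0,T]$, application of Theorem~\ref{thm:4thPMExplicitGeneral} to each block, geometric summation for the upper bounds, and the inequality $(1-x)^4\geq 1-4x$ (equivalently your expansion $(a-b)^4\geq a^4-4a^3b$) combined with $\sum_{j\geq 0} j\,2^{-(j+1)}=1$ and $1-2^{-J}\geq 1-2T_0/T$ for the lower bound. Your remark about checking that hypotheses~(5)--(6) persist on each dyadic block is a point the paper leaves implicit (``due to our conditions''), so your treatment is if anything slightly more careful.
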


\begin{proof}
It is clear that all three estimates hold for $T\in[T_0,2T_0]$, so we can assume $T\geq 2T_0$. Let $n_0\geq 1$ be an integer such that $2^{-n_0}T\geq T_0$ but $2^{-n_0-1}T<T_0$. We then have
\[
\mathcal{M}_{2}(T_0,T) = \mathcal{M}_{2}\left(T_0,\frac{T}{2^{n_0}}\right) + \sum_{n=0}^{n_0-1}\mathcal{M}_{2}\left(\frac{T}{2^{n+1}},\frac{T}{2^{n}}\right)
\]
by a dyadic partition. We can use the estimates from Theorem~\ref{thm:4thPMExplicitGeneral} to bound the above sum due to our conditions. Then the upper bounds from Corollary~\ref{cor:4thPMExplicitGeneral} immediately follow. The lower bound is derived as follows. Using $(1-x)^{4}-1\geq -4x$, which is valid for $x\geq 0$, we obtain
\begin{flalign*}
\frac{1}{2\pi^2}\sum_{n=0}^{n_0-1}\frac{T}{2^{n+1}}\left(\log{\frac{T}{2^{n+1}}}\right)^{4} &\geq \frac{1}{2\pi^2}\left(\sum_{n=0}^{n_0-1}\frac{1}{2^{n+1}}\right)T\log^{4}{\frac{T}{2}} - \frac{2\log{2}}{\pi^2}\left(\sum_{n=0}^{n_0-1}\frac{n}{2^{n+1}}\right)T\log^{3}\frac{T}{2} \\
&\geq \frac{1}{2\pi^2}\left(1-\frac{2T_0}{T}\right)T\log^{4}{\frac{T}{2}} - \frac{2\log{2}}{\pi^2}T\log^{3}\frac{T}{2}.
\end{flalign*}
The result now follows from~\eqref{eq:4thPMExplicitGeneral}.
\end{proof}

\begin{corollary}
\label{cor:4thPM}
Let $A_{0}\geq 10^{10}$. Then we have $\mathcal{M}_{2}(A_0) \leq a_1(A_0)$ and $\mathcal{M}_{2}(A_0,T) \leq a_2(A_0)T\log^{4}{T}$ for every $T\geq A_0$, where
\[
a_1(A_0) \de \frac{1}{2\pi^2}A_0\log^{4}\frac{A_0}{2} + 34.842 A_0\log^{3}\frac{A_0}{2} + 1.3897\cdot 10^{16}
\]
and
\[
a_2(A_0) \de \frac{1}{2\pi^2}\left(1-\frac{\log{2}}{\log{A_0}}\right)^{4} + \frac{34.842}{\log{A_0}}\left(1-\frac{\log{2}}{\log{A_0}}\right)^{3} + \frac{9.5079\cdot 10^{15}}{A_0\log^{4}{A_0}}.
\]
\end{corollary}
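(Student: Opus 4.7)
The first inequality $\mathcal{M}_{2}(A_{0})\leq a_{1}(A_{0})$ is immediate: apply Corollary~\ref{cor:main4thPM}, equation~\eqref{eq:main4thPM2}, at $T=A_{0}$ (legitimate since $A_{0}\geq 10^{5}$); the right-hand side is exactly $a_{1}(A_{0})$ by definition.

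For the second inequality, the plan is to bound $\mathcal{M}_{2}(A_{0},T)$ using Corollary~\ref{cor:4thPMExplicitGeneral}~\eqref{4thMlargeV} with the fixed anchor $T_{0}=10^{5}$, rather than with $T_{0}=A_{0}$. By monotonicity of the integral, $\mathcal{M}_{2}(A_{0},T)\leq \mathcal{M}_{2}(10^{5},T)$ for $A_{0}\geq 10^{5}$, and with the parameters optimized to $\mathfrak{F}_{1}=48.801$ (the same optimization used in the derivation of~\eqref{eq:main4thPM2}), \eqref{4thMlargeV} yields
\[
\mathcal{M}_{2}(10^{5},T)\leq \frac{T\log^{4}(T/2)}{2\pi^{2}}+48.801\,T\log^{3}(T/2)+\mathcal{M}_{2}(10^{5},2\cdot 10^{5}).
\]
The seed term $\mathcal{M}_{2}(10^{5},2\cdot 10^{5})$ is bounded above by $2.1817\cdot 10^{10}$; in fact, Theorem~\ref{thm:4thPMExplicitGeneral} applied at $T=10^{5}$ gives a much tighter numerical value (about $7.5\cdot 10^{9}$), so the looser constant $2.1817\cdot 10^{10}$ is chosen so that the resulting expression matches the structure of $a_{2}(A_{0})$.

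The crux is then the purely algebraic inequality
\[
\frac{T\log^{4}(T/2)}{2\pi^{2}}+48.801\,T\log^{3}(T/2)+2.1817\cdot 10^{10}\leq a_{2}(A_{0})\,T\log^{4}T
\]
for $T\geq A_{0}\geq 10^{5}$. Using $(1-\log 2/\log A_{0})^{k}=\log^{k}(A_{0}/2)/\log^{k}A_{0}$ for $k\in\{3,4\}$, one computes
\[
a_{2}(A_{0})\,A_{0}\log^{4}A_{0} = \frac{A_{0}\log^{4}(A_{0}/2)}{2\pi^{2}}+48.801\,A_{0}\log^{3}(A_{0}/2)+2.1817\cdot 10^{10},
\]
so the two sides agree at $T=A_{0}$ by construction of $a_{2}$. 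Letting $F(T)$ denote the difference (RHS minus LHS), we have $F(A_{0})=0$ and it suffices to show $F'(T)\geq 0$ on $[A_{0},\infty)$. Differentiating and using $\log(T/2)\leq \log T$ together with $a_{2}(A_{0})>1/(2\pi^{2})$ (with quantitative margin provided by the middle term $48.801(1-\log 2/\log A_{0})^{3}/\log A_{0}$), one reduces the positivity of $F'(T)$ to comparing $[a_{2}(A_{0})-1/(2\pi^{2})]\log T(\log T+4)$ against $48.801(\log T+3)$, which holds for $T\geq A_{0}\geq 10^{5}$.

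The main obstacle is this final calculus step: since the margin $a_{2}(A_{0})-1/(2\pi^{2})$ shrinks to $0$ as $A_{0}\to\infty$, one must be careful in confirming that $F'(T)\geq 0$ uniformly in $A_{0}$. The argument ultimately rests on the fact that although the margin shrinks like $48.801/\log A_{0}$, one always has $\log T\geq \log A_{0}$, so the leading $[a_{2}(A_{0})-1/(2\pi^{2})]\log^{4}T$-contribution does dominate the $O(\log^{3}T)$-subtractions once a uniform threshold is passed (and this threshold lies below $A_{0}$).
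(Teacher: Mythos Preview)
Your first inequality and the setup for the second (bounding $\mathcal{M}_{2}(A_{0},T)\le \mathcal{M}_{2}(10^{5},T)$ and invoking the bound~\eqref{eq:Cor2M2} with $\mathfrak{F}_{1}=48.801$ and seed $2.1817\cdot 10^{10}$) match the paper exactly. The difference is in the final calculus step, and there your crude reduction does not go through.

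After replacing $\log(T/2)$ by $\log T$ in the negative terms of $F'(T)$, you are left with the inequality
\[
\bigl[a_{2}(A_{0})-\tfrac{1}{2\pi^{2}}\bigr]\,\log T\,(\log T+4)\;\ge\;48.801\,(\log T+3),
\]
and you claim this holds for all $T\ge A_{0}\ge 10^{5}$. But for large $A_{0}$ one has
\[
a_{2}(A_{0})-\tfrac{1}{2\pi^{2}}=\Bigl(48.801-\tfrac{2\log 2}{\pi^{2}}\Bigr)\tfrac{1}{\log A_{0}}+O\!\bigl(\tfrac{1}{\log^{2}A_{0}}\bigr)\approx \tfrac{48.661}{\log A_{0}},
\]
so at $T=A_{0}$ the inequality becomes (to leading order) $48.661(\log A_{0}+4)\ge 48.801(\log A_{0}+3)$, i.e.\ $\log A_{0}\lesssim 345$. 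For $A_{0}\gtrsim e^{345}$ your reduced inequality fails at $T=A_{0}$; the threshold where it first holds is $T\approx A_{0}^{1.003}>A_{0}$. The crude bound $\log(T/2)\le\log T$ throws away exactly the margin you need.

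The paper's fix is a one-line observation you almost made yourself: you noted that the two sides agree at $T=A_{0}$, but in fact the right-hand side of~\eqref{eq:Cor2M2} equals $a_{2}(T)\,T\log^{4}T$ \emph{identically}, for every $T$. Thus the desired inequality is just $a_{2}(T)\le a_{2}(A_{0})$ for $T\ge A_{0}$, which follows once one checks that $a_{2}$ is decreasing on $[10^{5},\infty)$. That derivative check is routine (the dominant term in $a_{2}'$ comes from $48.801\,\log^{3}(T/2)/\log^{4}T$, whose derivative is negative since $3\log T-4\log(T/2)<0$ for $T\ge 10^{5}$), and it sidesteps the uniformity issue entirely.
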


\begin{proof}
The first estimate follows by the second inequality in~\eqref{eq:main4thPM2} of Corollary~\ref{cor:main4thPM}. Because $\mathcal{M}_{2}\left(A_0,T\right)\leq \mathcal{M}_2\left(10^{10},T\right)$ and $\mathcal{M}_{2}\left(10^{10},2\cdot 10^{10}\right)\leq 9.5079\cdot10^{15}$ by Table~\ref{tab:CorMain4thPM}, we have $\mathcal{M}_{2}(A_0,T) \leq a_2(T)T\log^{4}{T}$ by~\eqref{eq:Cor2M2}, see the proof of Corollary~\ref{cor:main4thPM}. The derivative analysis shows that $a_2(T)$ is a decreasing function for $T\geq 10^5$, implying that $a_2(T)\leq a_2(A_0)$. This gives the second estimate.
\end{proof}

\begin{proof}[Proof of Corollary~\ref{cor:main4thPM}]
Assume $T\geq T_0'=T_0\geq 55$, and also that the conditions from Theorem~\ref{thm:4thPMExplicitGeneral} are satisfied. By~\eqref{4thMsmallV} we have
\begin{equation*}
\mathcal{M}_{2}(T_0',T) \leq \frac{1}{\pi^2}T\log^{4}{\frac{T}{2}} + \mathcal{C}_{1}\left(T_0',\bm{c},\bm{\mathfrak{a}},\bm{\mathfrak{b}}\right)T\log^{\frac{7}{2}}{\frac{T}{2}} + \mathcal{M}_{2}(T_0',2T_0')
\end{equation*}
for $T\geq T_0'$, where
\[
\mathcal{C}_{1}\left(T_0',\bm{c},\bm{\mathfrak{a}},\bm{\mathfrak{b}}\right) \de 2\sum_{n=3}^{6}\left(\frac{\mathcal{J}_{n}}{\pi^2}+\frac{2\mathcal{J}_{1}\mathcal{J}_{n}}{\log{(T_0'/2)}}\right)^{\frac{1}{2}} + \frac{\mathfrak{F}_{2}\left(T_0',\bm{c},\bm{\mathfrak{a}},\bm{\mathfrak{b}}\right)}{\log^{1/2}{(T_0'/2)}}.
\]
We are taking $T_0'=3\cdot 10^{3}$ because for this value the calculations via numerical integration are still reasonably fast. We obtain $7.031\cdot 10^5<\mathcal{M}_2(T_0') < 7.032\cdot 10^5$ and $1.249\cdot 10^6<\mathcal{M}_2(T_0',2T_0')<1.25\cdot 10^6$. For the choices $\bm{c}=(-0.923364,0.161176)$, $\bm{\mathfrak{a}}=(0.531241,1.52906)$ and $\bm{\mathfrak{b}}=(0.754804,1.68921)$, the value of $\mathcal{C}_{1}(T_0',\bm{c},\bm{\mathfrak{a}},\bm{\mathfrak{b}})$ is $20.7225$, which is as small\footnote{According to the method \texttt{FindMinimum} from \emph{Mathematica}. It should also be noted that all computations in this paper were performed with this software system.} as possible. We can also see that the conditions~(5) and~(6) from Theorem~\ref{thm:4thPMExplicitGeneral} are satisfied by this choice of parameters. Therefore, Corollary~\ref{cor:4thPMExplicitGeneral} implies that
\begin{equation}
\label{eq:Cor2MExpl}
\mathcal{M}_{2}\left(T_0',T\right) \leq \frac{1}{\pi^2}T\log^{4}{\frac{T}{2}} + 20.7225\cdot T\log^{\frac{7}{2}}{\frac{T}{2}} + 1.25\cdot 10^6
\end{equation}
for $T\geq T_0'=3\cdot 10^3$. Inequality~\eqref{eq:main4thPM1} now easily follows from~\eqref{eq:Cor2MExpl} since $\mathcal{M}_{2}(T)=\mathcal{M}_{2}\left(T_0',T\right)+\mathcal{M}_{2}\left(T_0'\right)$.

Assume $T\geq T_0''=T_0\geq T_0'=3\cdot 10^3$, and also that the conditions from Theorem~\ref{thm:4thPMExplicitGeneral} are satisfied. Then 
\begin{equation}
\label{eq:Cor2M2}
\mathcal{M}_{2}\left(T_0'',T\right) \leq \frac{1}{2\pi^2}T\log^{4}{\frac{T}{2}} + \mathfrak{F}_{1}\left(T_0'',\bm{c},\bm{\sigma},\bm{\mathfrak{a}},\bm{\mathfrak{b}}\right)T\log^{3}{\frac{T}{2}} + \mathcal{M}_{2}\left( T_{0}'',2T_{0}''\right)
\end{equation}
by~\eqref{4thMlargeV}. Also, $\mathcal{M}_{2}\left(T_0'',2T_0''\right)\leq \mathcal{M}_{2}\left(T_0',2T_0''\right)\leq\mathcal{C}_{2}\left( T_{0}''\right)$ and $\mathcal{M}_{2}\left(T_0''\right)\leq\mathcal{C}_{2}\left( T_{0}''/2\right)+7.032\cdot10^5$ by~\eqref{eq:Cor2MExpl} and~\eqref{eq:main4thPM1}, respectively, where
\[
\mathcal{C}_{2}\left( T_{0}''\right)\de \frac{2}{\pi^2}T_0''\log^{4}{T_0''} + 20.7225\left(2T_0''\log^{\frac{7}{2}}{T_0''} \right)+ 1.25\cdot 10^6.
\] 
Trivially, $\mathcal{M}_{2}(T)=\mathcal{M}_{2}\left(T_0''\right)+\mathcal{M}_{2}\left(T_0'',T\right)$. We take the values for $T_0''$ as indicated in Table~\ref{tab:ProofCorMain4thPM}. Finding a global minimum for $\mathfrak{F}_1\left(T_0'',\bm{c},\bm{\sigma},\bm{\mathfrak{a}},\bm{\mathfrak{b}}\right)$ for each $T_0''$ is a computationally very complex problem, since we need to optimize 11 parameters that satisfy the conditions~(2)--(4) from Theorem~\ref{thm:4thPMExplicitGeneral}. To simplify the process, we used randomization; we generate $50$ trials where for each trial we randomly choose $100$ sets of values for these parameters, then we select an element from this set such that $\mathfrak{F}_1\left(T_0'',\bm{c},\bm{\sigma},\bm{\mathfrak{a}},\bm{\mathfrak{b}}\right)$ is minimal, and finally the ``minimal'' set from all trials. The values we obtained with such a method are listed in Table~\ref{tab:ProofCorMain4thPM}.

\begin{table}[h]
\centering
\footnotesize{
\begin{tabular}{lllll} 
\toprule
$T_0''$ & $\bm{c}$ & $\bm{\sigma}$ & $\bm{\mathfrak{a}}$ & $\bm{\mathfrak{b}}$ \\ 
\midrule
$10^4$ & $(-0.879,0.1809)$ & $(0.3545,(0.7898,0.4008),(0.5456,0.3933))$ & $(0.339,1.7553)$ & $(1.7215,1.303)$ \\
$10^5$ & $(-0.938,0.194)$ & $(0.368,(0.718,0.435),(0.629,0.3654))$ & $(1.99,1.44)$ & $(1.33,1.45)$ \\
$10^6$ & $(-0.9295,0.1721)$ & $(0.4077,(0.6957,0.3413),(0.5672,0.4288))$ & $(1.6769,1.5632)$ & $(1.6891,1.6748)$ \\
$10^7$ & $(-0.9273,0.2061)$ & $(0.4104,(0.6263,0.3536),(0.5743,0.41))$ & $(1.8473,1.8288)$ & $(1.649,1.9094)$ \\
$10^8$ & $(-0.9266,0.1637)$ & $(0.3561,(0.7103,0.3381),(0.5645,0.4285))$ & $(0.6164,1.9123)$ & $(1.0018,1.9929)$ \\
$10^9$ & $(-0.834,0.22985)$ & $(0.3964,(0.9479,0.3927),(0.5846,0.4235))$ & $(1.3609,1.1367)$ & $(0.7112,1.8455)$ \\
$10^{10}$ & $(-0.8303,0.1676)$ & $(0.3985,(0.668,0.4088),(0.551,0.386))$ & $(1.4356,1.28)$ & $(1.5563,1.6332)$ \\
$10^{15}$ & $(-0.813,0.221)$ & $(0.3464,(0.8268,0.4263),(0.5698,0.3511))$ & $(1.0851,1.0245)$ & $(0.8137,1.8276)$ \\
$10^{20}$ & $(-0.875,0.1936)$ & $(0.3785,(0.8615,0.45),(0.619,0.393))$ & $(0.1917,1.525)$ & $(1.1358,1.717)$ \\
\bottomrule
\end{tabular}}
\caption{Our choice of the values for the parameters $\bm{c}$, $\bm{\sigma}$, $\bm{\mathfrak{a}}$ and $\bm{\mathfrak{b}}$.} 
\label{tab:ProofCorMain4thPM}
\end{table}

We obtain $\mathfrak{F}_1\left(T_0'',\bm{c},\bm{\sigma},\bm{\mathfrak{a}},\bm{\mathfrak{b}}\right)\leq \mathfrak{M}_{1}\left(T_0''\right)$, where the values for $\mathfrak{M}_{1}\left(T_0''\right)$ are listed in Table~\ref{tab:CorMain4thPM}. The conditions~(5) and~(6) from Theorem~\ref{thm:4thPMExplicitGeneral} are clearly satisfied for each $T_0''$ by the above choice of parameters. The first inequality in~\eqref{eq:main4thPM2} thus follows from~\eqref{eq:4thMLower} since $\mathcal{M}_{2}(T)\geq \mathcal{M}_{2}\left(T_0'',T\right)$. Now, $\mathcal{M}_{2}\left(10^4,2\cdot 10^4\right)\leq 9.9857\cdot10^8$ and $\mathcal{M}_{2}\left(10^4\right)\leq 3.8095\cdot10^8$, confirming the values listed in Table~\ref{tab:CorMain4thPM}. This gives $\mathcal{M}_{2}\left(10^4\right)+\mathcal{M}_{2}\left(10^4,2\cdot 10^4\right)\leq\mathfrak{M}_{2}\left(10^4\right)$, where the value for $\mathfrak{M}_{2}\left(10^4\right)$ is as in Table~\ref{tab:CorMain4thPM}. The proof of~\eqref{eq:main4thPM2} for $T_0=10^4$ is thus complete. For other values of $T_0$ we proceed as follows. For $n\in\{5,\ldots,10\}\cup\{15\}\cup\{20\}$, let $\mathscr{A}_n\de\{1,\ldots,n-4\}$ if $n\in\{5,\ldots,10\}$, $\mathscr{A}_n\de\{5,\ldots,11\}$ if $n=15$, and $\mathscr{A}_n\de\{5\}\cup\{10,\ldots,16\}$ if $n=20$. Then
\begin{multline*}
\mathcal{M}_{2}\left(10^n,2\cdot 10^n\right) \leq \min\biggl\{\mathcal{C}_{2}\left(10^n\right), \\
\min_{i\in\mathscr{A}_n}\left\{\frac{10^n}{\pi^2}\log^{4}{10^n}+\left(2\cdot10^n\right)\mathfrak{M}_{1}\left(10^{n-i}\right)\log^{3}{10^n}+\mathcal{M}_{2}\left(10^{n-i},2\cdot10^{n-i}\right)\right\}\biggr\}
\end{multline*}
and
\begin{multline*}
\mathcal{M}_{2}\left(10^n\right) \leq \min\left\{\mathcal{C}_{2}\left(\frac{10^n}{2}\right)+7.032\cdot10^{5},\right. \\
\left.\min_{i\in\mathscr{A}_n}\left\{\frac{10^n}{2\pi^2}\log^{4}{\frac{10^n}{2}}+\mathfrak{M}_{1}\left(10^{n-i}\right)10^{n}\log^{3}{\frac{10^n}{2}}+\mathfrak{M}_{2}\left(10^{n-i}\right)\right\}\right\}.
\end{multline*}
With this we recursively verify that $\mathcal{M}_{2}\left(10^n\right)+\mathcal{M}_{2}\left(10^n,2\cdot 10^n\right)\leq\mathfrak{M}_{2}\left(10^n\right)$, where the values for $\mathfrak{M}_{2}\left(10^n\right)$ are listed in Table~\ref{tab:CorMain4thPM}. The proof of Corollary~\ref{cor:main4thPM} is thus complete.
\end{proof}

\begin{proof}[Proof of Corollary~\ref{cor:main3rdPM}]
We have
\[
\mathcal{M}_{1}(T)\leq T\log{T}, \quad \mathcal{M}_{2}(T)\leq \left(\frac{1}{2\pi^2}+\frac{\mathfrak{M}_{1}(T_0)}{\log{T}}+\frac{\mathfrak{M}_{2}(T_0)}{T\log^{4}{T}}\right)T\log^{4}{T}
\]
for $T\geq T_0\geq 10^6$ by~\cite[Theorem~3.4]{DonaZunigaAlterman} for the first inequality, and by Corollary~\ref{cor:main4thPM} for the second inequality. The result now follows by the Cauchy-Bunyakovsky-Schwarz (CBS) inequality since then $\mathcal{M}_{2/3}(T)\leq \left(\mathcal{M}_{1}(T)\mathcal{M}_{2}(T)\right)^{1/2}$.
\end{proof}

\section{Gabriel's convexity theorem and applications}
\label{sec:Gabriel}

Gabriel~\cite[Theorem 2]{Gabriel27} found an interesting two-variable convexity estimate for integrals of holomorphic functions along vertical lines in the strip, see Lemma~\ref{lem:Gabriel}.

\begin{lemma}
\label{lem:Gabriel}
Let $f(z)$ be a holomorphic function in the strip $\alpha<\Re\{z\}<\beta$ for $\alpha<\beta$ with continuous extension to the boundary $\Re\{z\}=\alpha$, $\Re\{z\}=\beta$. Assume that 
\[
\lim_{\substack{|y|\to\infty \\ y\in\R}}|f(x+\ie y)|=0
\]
uniformly in $x\in[\alpha,\beta]$. For $c>0$ define
\begin{equation}
\label{eq:integral1}
G_{f}(x,c)\de \left(\int_{-\infty}^{\infty} \left|f(x+\ie y)\right|^{\frac{1}{c}} \dif{y}\right)^{c}.
\end{equation}
Then
\begin{equation}
\label{eq:Gabriel}
G_{f}(x, w_{1}\lambda + w_{2}\mu) \leq G_{f}^{w_{1}} ( \alpha, \lambda) G_{f}^{w_{2}}( \beta, \mu)
\end{equation}
for $x\in[\alpha,\beta]$, where
\begin{equation}
\label{eq:w1w2}
w_{1} = w_1(x,\alpha,\beta) \de \frac{\beta-x}{\beta- \alpha}, \quad  w_{2} = w_2(x,\alpha,\beta) \de \frac{x-\alpha}{\beta-\alpha},
\end{equation}
and $\lambda$ and $\mu$ are arbitrary positive numbers.
\end{lemma}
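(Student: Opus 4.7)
The plan is to derive \eqref{eq:Gabriel} from the maximum principle in the strip $\alpha<\Re z<\beta$, applied to a suitable auxiliary holomorphic function associated with $f$. First I would dispose of the trivial cases in which $G_f(\alpha,\lambda)=\infty$ or $G_f(\beta,\mu)=\infty$, and then rescale $f$ by a positive constant so that $G_f(\alpha,\lambda)=G_f(\beta,\mu)=1$; inequality \eqref{eq:Gabriel} then reduces to showing $G_f(x,c)\leq 1$ for $c=w_1\lambda+w_2\mu$, with $w_1,w_2$ as in~\eqref{eq:w1w2}.

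The core step is to construct an ``outer-type'' function $O(z)$ in the strip, holomorphic and zero-free, whose modulus matches $|f|$ on each boundary: $|O(\alpha+it)|=|f(\alpha+it)|$ and $|O(\beta+it)|=|f(\beta+it)|$. Explicitly, $O(z)=\exp(U(z))$ where $U$ is holomorphic in the strip and $\Re U$ is the harmonic extension of $\log|f|$ from the boundary via the two Poisson kernels for the strip (whose total masses at $z=x+is$ are precisely $w_1$ and $w_2$). The maximum principle, applied to $f/O$ after dividing out any interior zeros of $f$ by a Blaschke-type product for the strip, combined with Phragm\'en--Lindel\"of to control the behaviour as $|\Im z|\to\infty$ using the uniform decay hypothesis, yields $|f(z)|\leq |O(z)|$ throughout the strip.

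To conclude, I would bound $\int|f(x+is)|^{1/c}\,ds\leq \int|O(x+is)|^{1/c}\,ds$ by the two boundary norms. Writing $\log|O(x+is)|$ as a sum of two Poisson integrals of total masses $w_1$ and $w_2$, applying Jensen's inequality to each separately with exponents $\lambda/c$ and $\mu/c$, and finally integrating in $s$ with H\"older's inequality for conjugate exponents $c/(w_1\lambda)$ and $c/(w_2\mu)$ -- whose reciprocals sum to $1$ precisely because $c=w_1\lambda+w_2\mu$ -- splits the integral into the product of $L^{1/\lambda}$ and $L^{1/\mu}$ norms of $|f|$ on the boundary lines, raised to the powers $w_1$ and $w_2$. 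After undoing the normalization, this is exactly \eqref{eq:Gabriel}. The main technical obstacle is the rigorous construction and control of $O$: the function $\log|f|$ need not be integrable on the boundary, interior zeros of $f$ must be removed by a Blaschke-type factor, and the behaviour of $f/O$ at $|\Im z|\to\infty$ must be verified so that the maximum principle genuinely applies. The uniform decay hypothesis is tailored to make these steps go through by a standard $\varepsilon$-regularization (e.g.\ replacing $f$ by $f+\varepsilon$ and letting $\varepsilon\to 0^+$), allowing the chain of inequalities to pass to the limit in full generality.
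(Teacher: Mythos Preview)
The paper does not supply its own proof of this lemma: it simply refers the reader to \cite[pp.~381--384]{KaratsubaVoronin}. So there is no in-paper argument to compare against; your sketch must be judged on its own terms.

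Your overall strategy---majorize $|f|$ by an outer-type function built from the Poisson extension of $\log|f|$, then use Jensen and H\"older with the conjugate pair $c/(w_1\lambda),\,c/(w_2\mu)$---is a standard and valid route to Gabriel's inequality, and you have correctly identified the genuine technical obstacles (non-integrability of $\log|f|$ on the boundary, interior zeros, behaviour at $|\Im z|\to\infty$). Two small points deserve tightening. First, ``rescale $f$ by a positive constant'' cannot by itself force \emph{both} $G_f(\alpha,\lambda)=1$ and $G_f(\beta,\mu)=1$; you need an additional factor of the form $e^{az}$ with $a\in\R$, which is harmless since $x=w_1\alpha+w_2\beta$ makes the extra exponentials cancel in~\eqref{eq:Gabriel}. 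Second, the specific regularization $f\mapsto f+\varepsilon$ is ill-suited here: $f+\varepsilon$ no longer decays to $0$ as $|y|\to\infty$, so $G_{f+\varepsilon}(x,c)=\infty$ for every $x$ and the chain of inequalities becomes vacuous. A workable alternative is to multiply by a factor such as $\exp(\varepsilon(z-\gamma)^2)$ for suitable $\gamma$, or to argue directly with subharmonicity of $|f|^{1/c}$ and an approximation by functions with finitely many zeros, as in the classical treatments. With these adjustments your outline would go through.
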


\begin{proof}
See, e.g.,~\cite[pp.~381--384]{KaratsubaVoronin}.
\end{proof}

This result should be compared with~\cite[Theorem 7]{HardyInghamPolya}, where~\eqref{eq:Gabriel} with $\lambda=\mu=1/p$ was proved under different growth conditions on $f(z)$. 

Let $y\mapsto f(x+\ie y)$ be a continuous function for some fixed $x\in\R$ and $A\leq y\leq B$. Then
\begin{equation}
\label{eq:integral2}
I_{f}(x,A,B,c) \de \int_{A}^{B}\left|f(x+\ie y)\right|^{\frac{1}{c}}\dif{y}
\end{equation}
exists for every $c>0$. The next theorem shows how we can replace, under certain mild conditions on $f(z)$, the infinite integral~\eqref{eq:integral1} with the finite integral~\eqref{eq:integral2}, and still more or less preserve the form of Gabriel's inequality~\eqref{eq:Gabriel}. 

\begin{theorem}
\label{thm:FXbyGabriel}
Let $1/2\leq\alpha<\beta$ and consider a holomorphic function $f(z)$ in $\{z\in\C\colon \alpha<\Re\{z\}<\beta\}\setminus\{1\}$ with continuous extension to the vertical lines $\Re\{z\}=\alpha$ and $\Re\{z\}=\beta$ such that:
\begin{enumerate}
    \item If $\beta>1$, then $f(z)$ has a pole of order $m\geq0$ at $z=1$. If $\beta\leq 1$, we set $m=0$.
    \item For any fixed $d>0$, $\lim_{|y|\to\infty,y\in\R}e^{-dy^2}|f(x+\ie y)|=0$ uniformly in $x\in[\alpha,\beta]$.
    \item For $x\in\{\alpha,\beta\}$ and for any fixed $a\geq0$, $b>0$ and $c>0$, $\lim_{|u|\to\infty,u\in\R}e^{-u^2}I_{f}(x,a,bu,c)=0$.
\end{enumerate}
Also, let $T>H\geq 0$, $\kappa>0$, $A_1\geq 0$, and $A_2\geq 0$, and let $\lambda$ and $\mu$ be arbitrary positive numbers. Then
\begin{multline}
\label{eq:FXbyGabriel}
\left(I_{f}\left(x,-T,-H,w_{1}\lambda+w_{2}\mu\right)+I_{f}\left(x,H,T,w_{1}\lambda+w_{2}\mu\right)\right)^{w_{1}\lambda+w_{2}\mu} \leq \left(\frac{(x-1)^2+H^2}{x^2+H^2}\right)^{-\frac{m}{2}}\times \\
\times\exp{\left(\kappa\left(1+\left(\frac{\beta-\alpha}{2T}\right)^{2}\right)\right)}\left(\mathcal{J}(\alpha,A_1,\lambda)\right)^{w_1\lambda}\left(\mathcal{J}(\beta,A_2,\mu)\right)^{w_2\mu}
\end{multline}
for $x\in[\alpha,\beta]$, where
\begin{equation}
\label{eq:FXbyGabrielJ}
\mathcal{J}(a,b,c) \de I_{f}(a,-b,b,c)+\int_{\frac{\kappa}{c}\left(\frac{b}{T}\right)^2}^{\infty}e^{-u}\left(I_{f}\left(a,-T\sqrt{\frac{cu}{\kappa}},-b,c\right)+I_{f}\left(a,b,T\sqrt{\frac{cu}{\kappa}},c\right)\right)\dif{u},
\end{equation}
$w_1=w_1(x,\alpha,\beta)$ and $w_2=w_2(x,\alpha,\beta)$ are given by~\eqref{eq:w1w2}, and $H>0$ if $x=1$ and $m\neq0$.
\end{theorem}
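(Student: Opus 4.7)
The strategy is to apply Gabriel's convexity inequality (Lemma~\ref{lem:Gabriel}) to a suitably modified holomorphic function $F(z)$ that both removes the possible pole of $f$ at $z=1$ and decays vertically like a Gaussian, so as to simultaneously meet Gabriel's decay hypothesis and match the shape of $\mathcal{J}$ in~\eqref{eq:FXbyGabrielJ}. Explicitly, I would set
\[
g(z) \de f(z)\left(\tfrac{z-1}{z}\right)^{m},\qquad F(z) \de g(z)\exp\bigl(\kappa(z-\tfrac{\alpha+\beta}{2})^{2}/T^{2}\bigr).
\]
Since $\alpha\geq 1/2$ the origin lies outside the closed strip, so $g$ is holomorphic on $\{\alpha\leq\Re\{z\}\leq\beta\}$ (the zero of $(z-1)^{m}$ cancels the pole of $f$), and hypothesis~(2) together with the dominating factor $e^{-\kappa y^{2}/T^{2}}$ in $|F(x+\ie y)|$ shows that $F$ satisfies the uniform decay hypothesis of Lemma~\ref{lem:Gabriel} on $[\alpha,\beta]$ (for any $d<\kappa/T^{2}$ the Gaussian beats the subgaussian growth of $|f|$).

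Applying Gabriel's lemma to $F$ with boundary exponents $\lambda$, $\mu$ and interior exponent $p\de w_{1}\lambda+w_{2}\mu$ at $\Re\{z\}=x$ gives $G_{F}(x,p)\leq G_{F}(\alpha,\lambda)^{w_{1}}G_{F}(\beta,\mu)^{w_{2}}$. Two elementary pointwise bounds then translate this into~\eqref{eq:FXbyGabriel}. First, for $x\in[\alpha,\beta]$ and $|y|\leq T$, $(x-\tfrac{\alpha+\beta}{2})^{2}-y^{2}\geq -T^{2}$ implies $|g(x+\ie y)|^{1/p}\leq e^{\kappa/p}|F(x+\ie y)|^{1/p}$. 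Second, differentiation of $((x-1)^{2}+y^{2})/(x^{2}+y^{2})$ in $y^{2}$ shows that this quotient is non-decreasing in $|y|$ exactly when $x\geq 1/2$, whence on $|y|\geq H$,
\[
|f(x+\ie y)|\leq \Bigl(\tfrac{x^{2}+H^{2}}{(x-1)^{2}+H^{2}}\Bigr)^{m/2}|g(x+\ie y)|;
\]
the borderline case $x=1$ with $m>0$ is precisely why the hypothesis $H>0$ is imposed. Integrating over $H\leq|y|\leq T$, chaining the two inequalities, extending the $|F|$--integral to $(-\infty,\infty)$, and invoking Gabriel on $F$ yields
\[
\bigl(I_{f}(x,-T,-H,p)+I_{f}(x,H,T,p)\bigr)^{p}\leq \Bigl(\tfrac{x^{2}+H^{2}}{(x-1)^{2}+H^{2}}\Bigr)^{m/2}e^{\kappa}\,G_{F}(\alpha,\lambda)^{w_{1}}G_{F}(\beta,\mu)^{w_{2}}.
\]

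It remains to majorize $G_{F}$ on the two boundaries by $\mathcal{J}$. Because $(\alpha-\tfrac{\alpha+\beta}{2})^{2}=(\tfrac{\beta-\alpha}{2})^{2}$ and $\alpha\geq 1/2$ forces $|(z-1)/z|\leq 1$ on $\Re\{z\}=\alpha$, we have $|F(\alpha+\ie y)|^{1/\lambda}\leq e^{\kappa((\beta-\alpha)/(2T))^{2}/\lambda}|f(\alpha+\ie y)|^{1/\lambda}e^{-\kappa y^{2}/(\lambda T^{2})}$. A Tonelli swap in~\eqref{eq:FXbyGabrielJ}, noting that for $y\geq A_{1}$ the $u$--range starts at $\kappa y^{2}/(\lambda T^{2})\geq\kappa(A_{1}/T)^{2}/\lambda$, recasts the definition of $\mathcal{J}$ as
\[
\mathcal{J}(\alpha,A_{1},\lambda)=\int_{-A_{1}}^{A_{1}}|f(\alpha+\ie y)|^{1/\lambda}\dif y+\int_{|y|>A_{1}}|f(\alpha+\ie y)|^{1/\lambda}e^{-\kappa y^{2}/(\lambda T^{2})}\dif y,
\]
which majorizes the full-line Gaussian-weighted integral. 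Hence $G_{F}(\alpha,\lambda)\leq e^{\kappa((\beta-\alpha)/(2T))^{2}}\mathcal{J}(\alpha,A_{1},\lambda)^{\lambda}$, with the analogue on $\Re\{z\}=\beta$; since $w_{1}+w_{2}=1$, multiplying the two resulting prefactors into the $e^{\kappa}$ from the interior step produces exactly $\exp(\kappa(1+((\beta-\alpha)/(2T))^{2}))$, matching~\eqref{eq:FXbyGabriel}. The only real technical work is verifying Gabriel's decay hypothesis for $F$ via~(2) and justifying the Tonelli swap and tail bounds via~(3); both are routine bookkeeping, and no further complex-analytic input beyond Lemma~\ref{lem:Gabriel} is needed.
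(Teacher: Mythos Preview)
Your proof is correct and follows essentially the same strategy as the paper: multiply $f$ by the pole-cancelling factor $((z-1)/z)^m$ and a Gaussian, apply Gabriel's lemma to the resulting entire function, then unpack the bounds. Two small differences in execution are worth noting. First, you center the Gaussian at $(\alpha+\beta)/2$, so the real part of the exponent on both boundary lines is exactly $\kappa((\beta-\alpha)/(2T))^2$; the paper instead uses $\exp(\kappa z^2/T^2)$ and must then invoke the elementary inequality $\alpha^2(\beta-x)+\beta^2(x-\alpha)-x^2(\beta-\alpha)\le\tfrac14(\beta-\alpha)^3$ to reach the same prefactor. Second, you recognise $\mathcal{J}(a,b,c)$ directly as $\int_{-b}^{b}|f|^{1/c}\,\dif y+\int_{|y|>b}|f|^{1/c}e^{-\kappa y^2/(cT^2)}\,\dif y$ via Tonelli, whereas the paper reaches the same expression through a change of variables followed by integration by parts (this is where condition~(3) is actually used, to kill the boundary terms; your Tonelli route needs only condition~(2) for finiteness). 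Both routes land in the same place, but yours is a little more streamlined.
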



\begin{proof}
Write $z=x+\ie y$ for $(x,y)\in\R^{2}$ and define
\[
g(z) \de \left(\frac{z-1}{z}\right)^{m}\exp{\left(\kappa\left(\frac{z}{T}\right)^{2}\right)}, \quad \phi(z)\de g(z)f(z),
\]
where $m$ is from the condition~(1) of Theorem~\ref{thm:FXbyGabriel}. It is clear that $\phi(z)$ is a holomorphic function in the strip $\alpha<x<\beta$ with continuous extension to the vertical lines $x=\alpha$ and $y=\beta$. By a slight modification of~\cite[Lemma 3.7]{KADIRI201822} we have $|g(z)|\leq\omega_1(x)e^{-\kappa(|y|/T)^{2}}$ for $x\geq1/2$, $y\in\R$, and $|g(z)|\geq\omega_2(x)$ for $x\geq1/2$, $|y|\in[H,T]$, where
\[
\omega_1(x)\de \exp{\left(\kappa\left(\frac{x}{T}\right)^{2}\right)}, \quad \omega_2(x)\de \left(\frac{(x-1)^2+H^2}{x^2+H^2}\right)^{\frac{m}{2}}\exp{\left(\kappa\left(\frac{x}{T}\right)^{2}-\kappa\right)}.
\]
This also shows that $\lim_{|y|\to\infty}\left|\phi(x+\ie y)\right|=0$, uniformly in $x\in[\alpha,\beta]$, due to the condition~(2). Therefore, Lemma~\ref{lem:Gabriel} can be applied for $\phi(z)$. We are going to provide upper and lower bounds for $G_{\phi}(x,c)$, where this function is defined by~\eqref{eq:integral1}. For $A\geq0$, $x\in\{\alpha,\beta\}$ and $c>0$ we have
\begin{flalign}
G_{\phi}(x,c) &\leq \omega_{1}(x)\left(\int_{-\infty}^{\infty}\exp{\left(-\frac{\kappa}{c}\left(\frac{y}{T}\right)^2\right)}\left|f(x+\ie y)\right|^{\frac{1}{c}}\dif{y}\right)^{c} \nonumber \\ 
&= \omega_1(x)\left(\int_{-\infty}^{\infty}e^{-v^2}\left(\frac{\dif{}}{\dif{v}}I_{f}\left(x,A,T\sqrt{\frac{c}{\kappa}}v,c\right)\right)\dif{v}\right)^{c} \label{eq:gabrielU1} \\ 
&= \omega_1(x)\left(\int_{-\infty}^{\infty}2ve^{-v^2}I_{f}\left(x,A,T\sqrt{\frac{c}{\kappa}}v,c\right)\dif{v}\right)^{c}\label{eq:gabrielU2} \\
&\leq\omega_1(x)\left(\mathcal{J}(x,A,c)\right)^{c}, \label{eq:gabrielU3}
\end{flalign}
where $\mathcal{J}(a,b,c)$ is defined by~\eqref{eq:FXbyGabrielJ}. Equality~\eqref{eq:gabrielU1} follows by taking $y=Tv\sqrt{c/\kappa}$ and noticing that
\[
\frac{\dif{}}{\dif{v}}I_{f}\left(x,A,T\sqrt{\frac{c}{\kappa}}v,c\right) = T\sqrt{\frac{c}{\kappa}}\left.\frac{\partial}{\partial{B}}I_{f}(x,A,B,c)\right|_{B=T\sqrt{\frac{c}{\kappa}}v} = T\sqrt{\frac{c}{\kappa}}\left|f\left(x+\ie T\sqrt{\frac{c}{\kappa}}v\right)\right|^{\frac{1}{c}},
\]
while~\eqref{eq:gabrielU2} follows by integration by parts and the condition~(3). Inequality~\eqref{eq:gabrielU3} follows by splitting the integral, taking $v^2=u$ and observing that 
\[
I_{f}\left(x,-T\sqrt{\frac{cu}{\kappa}},A,c\right) = I_{f}\left(x,-T\sqrt{\frac{cu}{\kappa}},-A,c\right) + I_{f}\left(x,-A,A,c\right), 
\]
and also that
$I_{f}(x,A,T\sqrt{cu/\kappa},c)\leq0$ and $I_{f}(x,-T\sqrt{cu/\kappa},-A,c)\leq0$ for $u\in[0,(\kappa/c)(A/T)^2]$. On the other hand, 
\begin{equation}
\label{eq:gabrielL}    
G_{\phi}(x,c_1) \geq  \omega_{2}(x)\left(\left(\int_{-T}^{-H}+\int_{H}^{T}\right)\left|f(x+\ie y)\right|^{\frac{1}{c_1}}\dif{y}\right)^{c_1} = \omega_{2}(x)\left(I_{f}(x,-T,-H,c_1)+I_{f}(x,H,T,c_1)\right)^{c_1}
\end{equation}
for $x\in[\alpha,\beta]$, $c_1>0$ and $T>H>0$ if $x=1$ and $m\neq 0$. To obtain an upper bounds for $G_{\phi}(\alpha,\lambda)$ and $G_{\phi}(\beta,\mu)$ from~\eqref{eq:Gabriel}, we are using~\eqref{eq:gabrielU3} with $x=\alpha$, $c=\lambda$ and $A=A_1$ for the former function, and with $x=\beta$, $c=\mu$ and $A=A_2$ for the latter function. Lower bound for $G_{\phi}(x,w_{1}\lambda+w_{2}\mu)$ from~\eqref{eq:Gabriel} is given by~\eqref{eq:gabrielL} for $c_1=w_{1}\lambda+w_{2}\mu$. The claim of Theorem~\ref{thm:FXbyGabriel} now follows by Lemma~\ref{lem:Gabriel} and
\[
\alpha^{2}(\beta-x) + \beta^{2}(x-\alpha) - x^{2}(\beta-\alpha) \leq \frac{1}{4}(\beta-\alpha)^{3}.
\]
The latter is true since the function on the left-hand side of the above inequality in the variable $x\in[\alpha,\beta]$ attains the maximum value at $x=(\alpha+\beta)/2$.
\end{proof}

Our main application of Theorem~\ref{thm:FXbyGabriel} is to the mollifier $f_{X}(s)$ of the Riemann zeta-function. Let $X\geq 1$ be some parameter, independent of a complex variable $s=\sigma+\ie t$, and define
\begin{equation}
\label{eq:fXMX}
f_{X}(s) \de \zeta(s) M_{X}(s)-1, \quad M_{X}(s)\de \sum_{n\leq X} \frac{\mu(n)}{n^{s}},
\end{equation}
where $\mu(n)$ is the M\"{o}bius function. Clearly, $f_X(s)$ is a holomorphic function on $\C\setminus\{1\}$ with a simple pole at $s=1$, $f_{X}(\bar{s})=\overline{f_X(s)}$, and also that $f_{X}(s)\ll X|t|$ for $\sigma\geq 0$ and large $|t|$. For $0\leq A\leq B$ and $c>0$ define 
\[
F_{X}(\sigma,A,B,c) \de \int_{A}^{B}\left|f_{X}(\sigma+\ie t)\right|^{\frac{1}{c}}\dif{t}.
\]
This function is well-defined for all $\sigma\in\R$ if $A>0$, and for all $\sigma\in\R\setminus\{1\}$ if $A=0$. The next corollary is essentially a slightly simplified inequality~\eqref{eq:FXbyGabriel} for the mollifier $f_{X}(s)$. This result is crucial in the proof of Theorem~\ref{thm:mainZD}.

\begin{corollary}
\label{cor:FXbyGabriel}
Let $1/2\leq \alpha<1<\beta\leq 3/2$, $T\geq H>1$, $\kappa>0$, $A\geq 0$, and also let $\lambda$ and $\mu$ be arbitrary positive numbers. Then
\begin{multline}
\label{eq:FXbyGabriel2}
\left(F_{X}\left(\sigma,H,T,w_{1}\lambda+w_{2}\mu\right)\right)^{w_{1}\lambda+w_{2}\mu} \leq \left(1-\frac{1}{H}\right)^{-1}\exp{\left(\kappa\left(1+\left(\frac{\beta-\alpha}{2T}\right)^{2}\right)\right)} \times \\
\left(F_{X}(\alpha,0,A,\lambda)+\int_{\frac{\kappa}{\lambda}\left(\frac{A}{T}\right)^2}^{\infty}e^{-u}F_{X}\left(\alpha,A,T\sqrt{\frac{\lambda u}{\kappa}},\lambda\right)\dif{u}\right)^{w_1\lambda}\left(\int_{0}^{\infty}e^{-u}F_{X}\left(\beta,0,T\sqrt{\frac{\mu u}{\kappa}},\mu\right)\dif{u}\right)^{w_2\mu}
\end{multline}
for $\sigma\in[\alpha,\beta]$, where $w_1=w_1(\sigma,\alpha,\beta)$ and $w_2=w_2(\sigma,\alpha,\beta)$ are given by~\eqref{eq:w1w2}.
\end{corollary}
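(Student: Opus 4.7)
The plan is to apply Theorem~\ref{thm:FXbyGabriel} to $f = f_X$ with parameters $A_1 = A$ and $A_2 = 0$, and then simplify both sides using the symmetry of $|f_X|$ across the real axis.

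First, I verify the hypotheses of Theorem~\ref{thm:FXbyGabriel} for $f_X$. By~\eqref{eq:fXMX}, $f_X$ is holomorphic on $\C \setminus \{1\}$ with at worst a simple pole at $s = 1$, so condition~(1) holds with $m \leq 1$; taking $m = 1$ is always admissible and yields a valid bound. The elementary growth estimate $f_X(\sigma + \ie t) \ll X|t|$ for $\sigma \geq 0$ and $|t|$ large, combined with the Gaussian decay factors $e^{-d y^2}$ and $e^{-u^2}$, verifies conditions~(2) and~(3) at once, since the integrals $I_{f_X}(x,a,bu,c)$ grow only polynomially in $u$. Next, I exploit $f_X(\bar s) = \overline{f_X(s)}$, which gives $|f_X(\sigma - \ie t)| = |f_X(\sigma + \ie t)|$. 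Consequently
\[
I_{f_X}(\sigma,-T,-H,c) + I_{f_X}(\sigma,H,T,c) = 2\,F_X(\sigma,H,T,c),
\]
and analogous identities convert every symmetric pair appearing in $\mathcal{J}(\alpha,A,\lambda)$ and $\mathcal{J}(\beta,0,\mu)$ into $2 F_X$-integrals matching the brackets in~\eqref{eq:FXbyGabriel2}. Setting $c \de w_1 \lambda + w_2 \mu$, the inequality~\eqref{eq:FXbyGabriel} then produces a factor $2^{c}$ on the left-hand side and factors $2^{w_1\lambda}$ and $2^{w_2 \mu}$ on the right-hand side, all of which cancel because $w_1\lambda + w_2\mu = c$.

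The only remaining task is to simplify the pole-correction factor, which with $m = 1$ equals $\bigl((\sigma-1)^2 + H^2\bigr)^{-1/2}\bigl(\sigma^2 + H^2\bigr)^{1/2}$. For $\sigma \in [\alpha,\beta] \subseteq [1/2,3/2]$,
\[
\frac{\sigma^2 + H^2}{(\sigma-1)^2 + H^2} = 1 + \frac{2(\sigma - 1) + 1}{(\sigma - 1)^2 + H^2} \leq 1 + \frac{2}{H^2},
\]
and $\sqrt{1 + 2/H^2} \leq H/(H - 1) = (1 - 1/H)^{-1}$ for $H > 1$ is equivalent to $2H^3 - 3H^2 + 4H - 2 \geq 0$, which is positive at $H = 1$ and has everywhere-positive derivative $6H^2 - 6H + 4$; this yields the prefactor $(1-1/H)^{-1}$ stated in the corollary.

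I expect the main obstacle to be purely bookkeeping: matching the symmetric integrals in $\mathcal{J}(\alpha, A, \lambda)$ and $\mathcal{J}(\beta, 0, \mu)$ with the bracketed expressions in~\eqref{eq:FXbyGabriel2}, and confirming that all the powers of $2$ cancel exactly. The underlying analytic content already lives in Theorem~\ref{thm:FXbyGabriel}; this corollary is a clean specialization to the mollifier $f_X$ using its reflection symmetry and the specific location of its pole.
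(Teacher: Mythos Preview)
Your proposal is correct and follows essentially the same route as the paper: apply Theorem~\ref{thm:FXbyGabriel} to $f_X$ with $m=1$, $A_1=A$, $A_2=0$, use the reflection symmetry $f_X(\bar s)=\overline{f_X(s)}$ to convert each symmetric pair of integrals into $2F_X$, and cancel the resulting powers of $2$. The only cosmetic differences are that the paper disposes of the degenerate case $T=H$ first (where the left side vanishes) and bounds the pole factor via $\frac{(\sigma-1)^2+H^2}{\sigma^2+H^2}\geq 1-\frac{8}{9+4H^2}\geq(1-1/H)^2$ rather than your equivalent inequality $\frac{\sigma^2+H^2}{(\sigma-1)^2+H^2}\leq 1+2/H^2\leq(1-1/H)^{-2}$.
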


\begin{proof}
Inequality~\eqref{eq:FXbyGabriel2} is obviously true in the case $H=T$, so we can assume that $T>H>1$. The claim then follows by Theorem~\ref{thm:FXbyGabriel} for $f=f_{X}(s)$, $m=1$, $A_1=A$ and $A_2=0$ since $f_{X}(\bar{s})=\overline{f_{X}(s)}$,
\[
\frac{(\sigma-1)^{2}+H^2}{\sigma^2+H^2}\geq 1-\frac{8}{9+(2H)^2}\geq \left(1-\frac{1}{H}\right)^{2}
\]
for $\sigma\in[1/2,3/2]$ and $H>1$. Note that all the conditions of Theorem~\ref{thm:FXbyGabriel} are satisfied by the properties of $f_{X}(s)$ mentioned before. 
\end{proof}

Theorem~\ref{thm:FXbyGabriel} can also be used to prove a convexity property for the power moments of holomorphic functions in a strip.

\begin{corollary}
\label{cor:convexity}
Let $T\geq T_0\geq2$ and $1/2\leq\alpha<\beta<1$. Assume that $f(z)$ is a holomorphic function in the strip $x=\Re\{z\}\in(\alpha,\beta)$ with continuous extension to the boundary, such that the conditions (2) and (3) from Theorem~\ref{thm:FXbyGabriel} are satisfied. Assume also that
\[
\int_{-T}^{T}\left|f(\alpha+\ie y)\right|^{b_1}\dif{y} \leq M_1 T^{a_1}\left(\log{T}\right)^{c_1}, \quad \int_{-T}^{T}\left|f(\beta+\ie y)\right|^{b_2}\dif{y} \leq M_2 T^{a_2}\left(\log{T}\right)^{c_2}
\]
for all $T\geq T_0$, where $M_i>0$, $a_i>0$, $b_i\geq1$ and $c_i\geq 0$, $i\in\{1,2\}$, are some constants that may depend on $f(z)$. Then
\[
\int_{-T}^{T}\left|f(x+\ie y)\right|^{\mu(b_1,b_2;x)}\dif{y} \leq M(x) T^{\mu(a_1,a_2;x)}\left(\log{T}\right)^{\mu(c_1,c_2;x)}
\]
for all $T\geq T_0$ and $x\in[\alpha,\beta]$, where
\[
\mu(k,l;x) \de \frac{kb_2(\beta-x)+lb_1(x-\alpha)}{b_2(\beta-x)+b_1(x-\alpha)}
\]
and, for $\kappa>0$,
\begin{gather*}
M(x)\de \exp{\left(\kappa\mu(b_1,b_2;x)\left(1+\left(\frac{\beta-\alpha}{2T_0}\right)^{2}\right)\right)}C_1^{\mu(1,0;x)}C_2^{\mu(0,1;x)}, \\
C_i\de M_i\left(1+\int_{0}^{\infty}e^{-u}\left(\frac{u}{\kappa b_i}\right)^{\frac{a_i}{2}}\left(1+\frac{\log{\left(1+\frac{u}{\kappa b_i}\right)}}{2\log{T_0}}\right)^{c_i}\dif{u}\right).
\end{gather*}
\end{corollary}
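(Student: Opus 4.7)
The plan is to invoke Theorem~\ref{thm:FXbyGabriel} with a set of parameters matched to the hypotheses. Since $\alpha<\beta<1$, condition~(1) of Theorem~\ref{thm:FXbyGabriel} forces $m=0$, so I may take $H=0$; I also set $A_1=A_2=0$, $\lambda=1/b_1$ and $\mu=1/b_2$. Conditions~(2) and~(3) are available by assumption, so the theorem applies. A direct calculation from~\eqref{eq:w1w2} shows
\[
w_1\lambda+w_2\mu = \frac{b_2(\beta-x)+b_1(x-\alpha)}{b_1 b_2(\beta-\alpha)} = \frac{1}{\mu(b_1,b_2;x)},
\]
so the left-hand side of~\eqref{eq:FXbyGabriel} is exactly $\left(\int_{-T}^{T}|f(x+\ie y)|^{\mu(b_1,b_2;x)}\dif{y}\right)^{1/\mu(b_1,b_2;x)}$, and the prefactor involving $H$ collapses to $1$.

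The substantive step is to bound the auxiliary quantities $\mathcal{J}(\alpha,0,1/b_1)$ and $\mathcal{J}(\beta,0,1/b_2)$ from~\eqref{eq:FXbyGabrielJ}. With these choices $\mathcal{J}(\alpha,0,1/b_1)=\int_{0}^{\infty}e^{-u}G_1\!\left(T\sqrt{u/(\kappa b_1)}\right)\dif{u}$, where $G_1(v)\de\int_{-v}^{v}|f(\alpha+\ie y)|^{b_1}\dif{y}$. I would split the $u$-integral at $u_*=\kappa b_1(T_0/T)^2$: on $[0,u_*)$ I bound $G_1$ by its value at $T_0$ using monotonicity and then by $M_1 T^{a_1}(\log T)^{c_1}$ using $T\geq T_0$, while on $[u_*,\infty)$ I apply the hypothesis directly together with the elementary estimates $\log(T\sqrt{u/(\kappa b_1)})\leq\log T+\tfrac{1}{2}\log(1+u/(\kappa b_1))$ (valid since $\log x\leq\log(1+x)$) and $1+\log(1+u/(\kappa b_1))/(2\log T)\leq 1+\log(1+u/(\kappa b_1))/(2\log T_0)$. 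After factoring out $T^{a_1}(\log T)^{c_1}$ and extending the second integral back to $[0,\infty)$ by non-negativity of its integrand, the two pieces combine into $\mathcal{J}(\alpha,0,1/b_1)\leq C_1 T^{a_1}(\log T)^{c_1}$ with $C_1$ exactly as in the statement; the same argument yields $\mathcal{J}(\beta,0,1/b_2)\leq C_2 T^{a_2}(\log T)^{c_2}$.

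To conclude, I would substitute these bounds into~\eqref{eq:FXbyGabriel} and raise both sides to the power $\mu(b_1,b_2;x)$. The required exponent bookkeeping reduces to the identities $\mu(b_1,b_2;x)w_1/b_1=\mu(1,0;x)$, $\mu(b_1,b_2;x)w_2/b_2=\mu(0,1;x)$ and $\mu(b_1,b_2;x)(a_1w_1/b_1+a_2w_2/b_2)=\mu(a_1,a_2;x)$, together with the analogue for $c_1,c_2$; $T\geq T_0$ is used one last time inside the exponential factor to replace $T$ by $T_0$, producing $M(x)$ as defined. The main technical obstacle is the small-$u$ regime where $T\sqrt{u/(\kappa b_i)}<T_0$ and the hypothesis cannot be applied as-is; the clean way of handling it is precisely what forces the ``$1+$'' inside $C_i$ and the appearance of $\log(1+u/(\kappa b_i))$ in place of $\log(u/(\kappa b_i))$.
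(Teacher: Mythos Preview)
Your proof is correct and follows essentially the same route as the paper: apply Theorem~\ref{thm:FXbyGabriel} with $m=0$, $H=0$, $\lambda=1/b_1$, $\mu=1/b_2$, bound the $\mathcal{J}$-terms using the hypotheses together with the substitution $\log x\leq\log(1+x)$, and then carry out the exponent identities $\mu(b_1,b_2;x)w_j/b_j=\mu(\delta_{1j},\delta_{2j};x)$ and $k\mu(1,0;x)+l\mu(0,1;x)=\mu(k,l;x)$. The only cosmetic difference is that the paper takes $A_1=A_2=T_0$, so the ``$1+$'' in $C_i$ arises from the explicit $I_f(\cdot,-T_0,T_0,\cdot)$ term in~\eqref{eq:FXbyGabrielJ}, whereas you take $A_1=A_2=0$ and recover the same ``$1+$'' by splitting the $u$-integral at $u_*=\kappa b_i(T_0/T)^2$ and bounding $\int_0^{u_*}e^{-u}\dif{u}\leq 1$; both devices produce the identical constant $C_i$.
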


\begin{proof}
We are using Theorem~\ref{thm:FXbyGabriel} with $m=0$, $H=0$, $A_1=A_2=T_0$, $\lambda=1/b_1$ and $\mu=1/b_2$. By the assumptions we have $I_{f}\left(\alpha,-T_0,T_0,1/b_1\right) \leq M_1T^{a_1}\left(\log{T}\right)^{c_1}$ and
\begin{flalign}
\label{eq:convexity}
I_{f}\left(\alpha,-T\sqrt{\frac{u}{\kappa b_1}},-T_0,\frac{1}{b_1}\right) &+ I_{f}\left(\alpha,T_0,T\sqrt{\frac{u}{\kappa b_1}},\frac{1}{b_1}\right) \leq \int_{-T\sqrt{\frac{u}{\kappa b_1}}}^{T\sqrt{\frac{u}{\kappa b_1}}}\left|f(\alpha+\ie y)\right|^{b_i}\dif{y} \nonumber \\ 
&\leq M_1T^{a_1}\left(\log{T}\right)^{c_1}\left(\frac{u}{\kappa b_1}\right)^{\frac{a_1}{2}}\left(1+\frac{\log\left(1+\frac{u}{\kappa b_1}\right)}{2\log{T_0}}\right)^{c_1}
\end{flalign}
since $u\geq \kappa b_1\left(T_0/T\right)^{2}$. Both inequalities hold also for $\beta$, $M_2$, $a_2$, $b_2$ and $c_2$ in place of $\alpha$, $M_1$, $a_1$, $b_1$ and $c_1$, respectively. Observe that the right-hand side of~\eqref{eq:convexity} is non-negative for all $u\geq0$. The result now follows since $1/\left(w_1\lambda+w_2\mu\right)=\mu(b_1,b_2;x)$, $w_1\lambda/\left(w_1\lambda+w_2\mu\right)=\mu(1,0;x)$, $w_2\mu/\left(w_1\lambda+w_2\mu\right)=\mu(0,1;x)$, $k\mu(1,0;x)+l\mu(0,1;x)=\mu(k,l;x)$, and $\mu(1,1;x)=1$.
\end{proof}

It should be noted that Corollary~\ref{cor:convexity} also holds for $\beta\geq 1$ if $f(z)$ is holomorphic in the new extended strip. It can be applied to general $L$-functions that belong to the Selberg class or are defined as in~\cite[Chapter~5]{IKANT} since for these functions the growth conditions from Theorem~\ref{thm:FXbyGabriel} are satisfied by the convexity estimates. In the case of the Riemann zeta-function, Corollary~\ref{cor:convexity} is related to some results from~\cite[Sections~8.2 and~8.5]{Ivic} and~\cite[Section~7.8]{titchmarsh1986theory}, and could have some effective applications to the generalized divisor problem~\cite{BellottiYang}.

\section{Proof of Theorem~\ref{thm:mainZD}}
\label{sec:ProofThm1}

As is customary in the classical theory of zero density estimates, let
\[
h_{X}(s) \de 1-f_{X}^2(s) = \left(2-\zeta(s)M_{X}(s)\right)\zeta(s)M_{X}(s),
\]
where $f_X(s)$ and $M_X(s)$ are defined by~\eqref{eq:fXMX}. Note that the zeros of $\zeta(s)$ are also the zeros of $h_{X}(s)$. Let $1/2<\sigma'<1<\sigma''$ and $0<H\leq T$. Assume that $h_{X}(s)$ does not vanish on the boundary of the open rectangle $\left\{z\in\C\colon \sigma'<\Re\{z\}<\sigma'',H<\Im\{z\}<T\right\}$. Then, by Littlewood's lemma~\cite[Section~9.9]{titchmarsh1986theory},
\begin{multline}
\label{eq:Littlewood}
N(\sigma,T)-N(\sigma,H) \leq \frac{1}{2\pi\left(\sigma-\sigma'\right)}\Biggl(\int_{H}^{T}\log{\left|h_{X}\left(\sigma'+\ie t\right)\right|}\dif{t}-\int_{H}^{T}\log{\left|h_{X}\left(\sigma''+\ie t\right)\right|}\dif{t} \\
+\int_{\sigma'}^{\sigma''}\arg{h_{X}\left(u+\ie T\right)}\dif{u}-\int_{\sigma'}^{\sigma''}\arg{h_{X}\left(u+\ie H\right)}\dif{u}\Biggr)
\end{multline}
for $\sigma\in\left[\sigma',\sigma''\right]$. Before proceeding to the proof of Theorem~\ref{thm:mainZD}, we need a few technical results. Corollary~\ref{cor:FXbyGabriel} is used for the estimation of the first integral, see Lemma~\ref{lem:FX}, where two additional results on the second power moments of $M_X(1/2+\ie t)$ and $f_X(1+\delta/\log{X}+\ie t)$ are needed, see Lemmas~\ref{lem:Mxone_half} and~\ref{lem:fxone}. Estimates for the rest of the integrals in~\eqref{eq:Littlewood} are provided by Lemmas~\ref{lem:3/2} and~\ref{lem:arg}.

\begin{lemma}
\label{lem:Mxone_half}
Let $T>0$, $X\geq X_{0}\geq 10^{9}$. Then 
\begin{equation*}
\int_{0}^{T} \left|M_{X}\left(\frac{1}{2} + \ie t\right)\right|^{2} \dif{t} \leq \left(C_{1}(X_0)T + C_{2}(X_0)X\right)\log{X},
\end{equation*}
where 
\begin{equation*}
C_{1}(X_0) \de \frac{6}{\pi^{2}} + \frac{b_{2}}{\log X_{0}}, \quad C_{2}(X_0) \de \frac{\pi m_{0} b_{1}}{\log X_{0}} + \frac{6 m_{0}}{ \pi X_{0}} + \frac{\pi m_{0} b_{2}}{ X_{0} \log X_{0}},
\end{equation*}
with $b_{1}\de 0.62$, $b_{2}\de 1.048$, and $m_0\de \sqrt{1+(2/3)\sqrt{6/5}}<1.31541$.
\end{lemma}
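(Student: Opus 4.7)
The plan is to estimate the $L^{2}$-mean of the Dirichlet polynomial
\[
M_{X}\!\left(\tfrac{1}{2}+\ie t\right) = \sum_{n\leq X}\frac{\mu(n)}{\sqrt{n}}\,n^{-\ie t}
\]
by the standard three-step strategy for $L^{2}$-moments of Dirichlet polynomials: first, reduce the integral to two arithmetic sums via an explicit mean-value theorem; second, replace those sums with effective asymptotic estimates for $X\geq X_{0}\geq10^{9}$; third, repack the resulting expression into the prescribed form $(C_{1}T+C_{2}X)\log X$.

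For step one I would apply an explicit Montgomery--Vaughan / Preissmann--Ramar\'{e} type inequality of the shape
\[
\int_{0}^{T}\left|\sum_{n\leq N} a_{n}\,n^{-\ie t}\right|^{2}\dif{t} \leq T\sum_{n\leq N}|a_{n}|^{2} + \pi m_{0}\sum_{n\leq N} n\,|a_{n}|^{2},
\]
in which the constant $\pi m_{0}$ with $m_{0}=\sqrt{1+(2/3)\sqrt{6/5}}$ is exactly the constant produced by the sharpest available explicit form of the underlying Hilbert inequality (the classical Montgomery--Vaughan inequality, by contrast, only produces a less favourable absolute constant). Applied with $a_{n}=\mu(n)/\sqrt{n}$, this reduces the problem to bounding the two arithmetic sums
\[
S_{1}(X)\de \sum_{n\leq X}\frac{\mu^{2}(n)}{n}, \quad S_{2}(X)\de \sum_{n\leq X}\mu^{2}(n).
\]

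For step two I would substitute known explicit upper bounds of the shape
\[
S_{1}(X)\leq \frac{6}{\pi^{2}}\log X + b_{2}, \quad S_{2}(X)\leq \frac{6}{\pi^{2}}X + b_{1}\frac{X}{\log X}
\]
(with the explicit constants $b_{1}=0.62$ and $b_{2}=1.048$), coming from effective refinements of $S_{1}(X)\sim (6/\pi^{2})\log X$ and $S_{2}(X)\sim (6/\pi^{2})X$ in the range $X\geq 10^{9}$. For step three I would collect the resulting terms, namely the main contribution $(6/\pi^{2})T\log X$ together with secondary contributions proportional to $T$, to $X$, and to $X/\log X$, and then absorb each secondary term into either $T\log X$ or $X\log X$ by invoking $\log X\geq \log X_{0}$ and $X\geq X_{0}$. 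This routine rearrangement gives exactly the stated constants $C_{1}(X_{0})$ and $C_{2}(X_{0})$: the $b_{2}/\log X_{0}$ term of $C_{1}$ and the $\pi m_{0}b_{1}/\log X_{0}$ term of $C_{2}$ are the principal secondary contributions, while the small $6m_{0}/(\pi X_{0})$ and $\pi m_{0}b_{2}/(X_{0}\log X_{0})$ summands in $C_{2}$ are the residuals left after using $X\geq X_{0}$ on constant-order terms.

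The main obstacle I expect is step one: the constant $\pi m_{0}$ is sharp enough that one cannot simply quote the classical Montgomery--Vaughan inequality but must rather invoke (or re-derive) an explicit Hilbert-type inequality producing precisely this constant. Steps two and three are essentially careful bookkeeping, and the only subtlety there is matching the $\log X_{0}$- versus $X_{0}$-scalings of the various error terms so that they collapse exactly into the shape claimed for $C_{1}(X_{0})$ and $C_{2}(X_{0})$.
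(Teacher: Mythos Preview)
Your three-step strategy is exactly the right one, and it is precisely what is carried out in \cite[Lemma~4.1]{KADIRI201822}, which the paper simply cites. The Preissmann constant $m_{0}$ and the reduction to the sums $S_{1}(X)=\sum_{n\leq X}\mu^{2}(n)/n$ and $S_{2}(X)=\sum_{n\leq X}\mu^{2}(n)$ are spot on.

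There is, however, a bookkeeping mismatch in step two that prevents your inputs from reproducing the stated $C_{2}(X_{0})$. With your bound $S_{2}(X)\leq (6/\pi^{2})X + b_{1}X/\log X$ and the inequality $\int_{0}^{T}\leq T S_{1}+\pi m_{0}S_{2}$, the term $\pi m_{0}(6/\pi^{2})X$ repackages into a summand $6m_{0}/(\pi\log X_{0})$ in $C_{2}$, whereas the lemma has $6m_{0}/(\pi X_{0})$; the orders of magnitude differ by a factor of roughly $X_{0}/\log X_{0}$. Back-solving from the stated constants shows that the actual inputs are the simpler bound $S_{2}(X)\leq b_{1}X$ (valid since $b_{1}=0.62>6/\pi^{2}\approx 0.6079$ and the error in the squarefree count is $O(\sqrt{X})$), together with an additional $\pi m_{0}S_{1}(X)$-type contribution from the mean-value inequality. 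That extra piece, bounded via $S_{1}(X)\leq (6/\pi^{2})\log X + b_{2}$, is what produces the two small residual summands $6m_{0}/(\pi X_{0})$ and $\pi m_{0}b_{2}/(X_{0}\log X_{0})$ after absorbing a $\log X$ and a constant into $X\log X$ using $X\geq X_{0}$. So your plan is correct in shape, but you should revisit the precise form of the Preissmann inequality (it contributes slightly more than $\pi m_{0}\sum n|a_{n}|^{2}$) and replace your $S_{2}$ estimate by the straight bound $S_{2}(X)\leq b_{1}X$.
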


\begin{proof}
See~\cite[Lemma 4.1]{KADIRI201822}.
\end{proof}

\begin{lemma}
\label{lem:fxone}
Let $T>0$, $X\geq X_{0}\geq 3\cdot10^{10}$, and $0<\delta\leq \frac{1}{20}\log{X}$. Then
\begin{equation*}
F_{X}\left(1+ \frac{\delta}{\log X},0,T,\frac{1}{2}\right) \leq  \left( C_{3}(X_0,\delta) + \frac{C_{4}(X_0,\delta)\left(T+\pi m_{0}\right)}{X}\right) \log {X}, 
\end{equation*}
where 
\begin{equation*} 
C_{3}(X_0,\delta) \de \frac{\pi m_{0}b_4}{2\delta}\exp{\left(\frac{2\delta\gamma}{\log{X_0}}\right)}, \quad
 C_{4}(X_0,\delta) \de \frac{b_{4}}{5\delta e^{\delta}}  + \frac{b_{3}e^{-2 \delta}}{\log{X_{0}}},
\end{equation*}
with $b_{3}\de 0.605$, $b_{4}\de 0.54$, and $m_0$ as in Lemma~\ref{lem:Mxone_half}.
\end{lemma}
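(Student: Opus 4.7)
The plan is to exploit the factorization
\[
f_X(s) = -\zeta(s)U_X(s), \qquad U_X(s) \de \sum_{n > X}\frac{\mu(n)}{n^s},
\]
which is valid for $\Re(s) > 1$ since $M_X(s) + U_X(s) = 1/\zeta(s)$ on that half-plane. On the line $\Re(s)=1+\delta/\log X$ this reduces the mean-square of $f_X$ to a product estimate: using the pointwise inequality $|\zeta(\sigma+\ie t)|\leq\zeta(\sigma)$ valid for $\sigma > 1$, I would write
\[
F_X\!\left(1+\tfrac{\delta}{\log X},0,T,\tfrac{1}{2}\right) = \int_0^T|\zeta(\sigma+\ie t)|^2|U_X(\sigma+\ie t)|^2\,\dif{t} \leq \zeta(\sigma)^2\int_0^T|U_X(\sigma+\ie t)|^2\,\dif{t},
\]
and then invoke an explicit Laurent-type bound of the form $\zeta(\sigma)\leq 1/(\sigma-1)+\gamma+O(\sigma-1)$. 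With $\sigma-1=\delta/\log X$ this produces a factor of order $(\log X)^2/\delta^2$ together with exactly the corrections $(1+2\delta/\log X_0)^2$ and $\exp(2\delta\gamma/\log X_0)$ that appear in $C_3$ and $C_4$.

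The remaining mean-square of $U_X$ I would handle by the same explicit Montgomery--Vaughan mean-value inequality that underlies Lemma~\ref{lem:Mxone_half}: with $a_n=\mu(n)\mathbf{1}_{n>X}$,
\[
\int_0^T|U_X(\sigma+\ie t)|^2\,\dif{t} \leq \sum_{n > X}(T+\theta n)\frac{\mu(n)^2}{n^{2\sigma}}
\]
for an explicit constant $\theta$. The two resulting tail sums are then evaluated by partial summation against the explicit squarefree count
\[
\sum_{n\leq y}\mu(n)^2 = \frac{6y}{\pi^2} + R(y), \qquad |R(y)|\leq m_0\sqrt{y},
\]
which is the source of the constant $m_0$ from the statement and gives
$\sum_{n>X}\mu(n)^2 n^{-2-2\delta/\log X}\ll e^{-2\delta}/X$ and $\sum_{n>X}\mu(n)^2 n^{-1-2\delta/\log X}\ll e^{-2\delta}\log X/\delta$. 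Reassembling $\zeta(\sigma)^2\cdot \int_0^T|U_X|^2\,\dif{t}$ and regrouping gives a bound of the required form $\bigl(C_3(X_0,\delta)+C_4(X_0,\delta)(T+\pi m_0)/X\bigr)\log^2 X$.

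The main difficulty is not in the structure of the argument but in the completely explicit bookkeeping through the three ingredients---the explicit mean-value inequality, the explicit squarefree density, and the Laurent bound for $\zeta$---so that the specific numerical constants $b_3=0.605$, $b_4=0.529$, and the exact $\delta/\log X_0$-dependent corrections emerge as stated. This is a lengthy but mechanical calculation, modeled directly on Kadiri's treatment of $\int_0^T|M_X(1/2+\ie t)|^2\dif t$ recalled in Lemma~\ref{lem:Mxone_half}.
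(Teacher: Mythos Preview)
The paper does not give a proof of its own; it simply cites \cite[Lemma~4.3]{KADIRI201822}. Your outline, however, has a genuine quantitative gap that prevents it from reaching the stated bound.

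The problematic step is replacing $|\zeta(\sigma+\ie t)|$ by its supremum $\zeta(\sigma)$. With $\sigma-1=\delta/\log X$ this costs a factor $\zeta(\sigma)^{2}\asymp(\log X)^{2}/\delta^{2}$. Combined with your own tail estimate $\sum_{n>X}\mu(n)^{2}n^{-1-2\delta/\log X}\asymp e^{-2\delta}(\log X)/\delta$ from the Montgomery--Vaughan $n$-term, the $T$-independent part of your bound comes out of order $(\log X)^{3}/\delta^{3}$, whereas the lemma asserts $C_{3}\log^{2}X$ with $C_{3}$ depending only on $X_{0}$ and $\delta$ and satisfying $C_{3}\asymp 1/\delta$. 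Likewise your $T$-term is $\asymp T(\log X)^{2}/(\delta^{2}X)$ rather than $\asymp T(\log X)^{2}/(\delta X)$ as $C_{4}$ requires. So even with flawless bookkeeping the method loses an extra factor of $(\log X)/\delta^{2}$ in the dominant term and $1/\delta$ in the secondary one; it cannot produce the constants $b_{3}$, $b_{4}$ or the stated shape.

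The loss comes from destroying the interaction between $\zeta$ and $U_X$ by taking a sup-norm on one factor. What is needed instead is to apply the mean-value theorem directly to the Dirichlet series for $f_X$ itself, $f_X(s)=\sum_{n>X}a_X(n)n^{-s}$ with $a_X(n)=\sum_{d\mid n,\,d\le X}\mu(d)$, and then to bound the coefficient sums $\sum_{n}a_X(n)^{2}n^{-2\sigma}$ and $\sum_{n}a_X(n)^{2}n^{1-2\sigma}$ using the dedicated explicit estimates developed in \cite{KADIRI201822}. Those auxiliary results, rather than the bare squarefree count, are the source of the specific constants $b_{3}=0.605$ and $b_{4}=0.529$.
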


\begin{proof}
For $\tau\in(1,11/10]$ and $X\geq 3\cdot10^{10}$ we have\footnote{The second inequality through a private communication with Prof.~Ramar\'{e}, since he and Zuniga Alterman had recently proved it and their result will be soon announced.}
\[
\sum_{n=1}^{\infty}\frac{\lambda_{X}^{2}(n)}{n^{\tau}} \leq \sum_{n\leq X}\frac{1}{n^\tau}\left(\sum_{\substack{d\mid n\\ d\leq X}}\mu(d)\right)^{2} + \sum_{n>X}\frac{\lambda_{X}^{2}(n)}{n^{\tau}} = \zeta(\tau)\sum_{d_1\leq X}\sum_{d_2\leq X}\frac{\mu(d_1)\mu(d_2)}{[d_1,d_2]^\tau} \leq \frac{b_{4}e^{\gamma(\tau-1)}}{\tau-1},
\]
where $\lambda_{X}(n)$ are Dirichlet coefficients of $f_{X}(s)$. The proof now closely follows that of~\cite[Lemmas~3.4 and~4.3]{KADIRI201822}, namely that
\begin{flalign*}
F_{X}\left(\sigma,0,T,\frac{1}{2}\right) &\leq \pi m_0\sum_{n=1}^{\infty}\frac{\lambda_{X}^{2}(n)}{n^{2\sigma-1}} + \left(T+\pi m_0\right)\sum_{n=1}^{\infty}\frac{\lambda_{X}^{2}(n)}{n^{2\sigma}} \\ 
&\leq \frac{\pi m_0b_4}{2}\frac{e^{2\gamma(\sigma-1)}}{\sigma-1} + \frac{T+\pi m_0}{X}\left(\frac{b_3}{X^{2(\sigma-1)}}+\frac{1}{5(5X)^{\sigma-1}}\sum_{n=1}^{\infty}\frac{\lambda_{X}^{2}(n)}{n^{\sigma}}\right) \\
&\leq \left(\frac{\pi m_0b_4}{2}e^{2\gamma(\sigma-1)}+\left(\frac{(\sigma-1)b_3}{X^{2(\sigma-1)}}+\frac{b_4}{5(5X)^{\sigma-1}}e^{\gamma(\sigma-1)}\right)\frac{T+\pi m_0}{X}\right)\frac{1}{\sigma-1}
\end{flalign*}
for $\sigma\in(1,21/20]$ and $X\geq 3\cdot 10^{10}$. The proof now follows by taking $\sigma=1+\delta/\log{X}$ in the above inequality.
\end{proof}

\begin{lemma}
\label{lem:3/2}
Let $T\geq H\geq 2\pi m_0$, $T\geq T_0>0$ and $X=hT$ for $h\geq 10^{9}/T_0$. Then
\[
-\int_{H}^{T}\log{\left|h_{X}\left(\frac{3}{2}+\ie t\right)\right|}\dif{t} \leq C_{5}(hT_0,h),
\]
where
\begin{equation}
\label{eq:C5}   
C_{5}(u,h) \de -\log{\left(1-9\left(1+\frac{2+\gamma+\frac{7}{36u}}{\log{u}}\right)^{2}\frac{\log^{2}{u}}{u}\right)}\frac{\left(\frac{3}{h}+8\pi m_0\right)\left(1+\frac{3.31}{\log{u}}\right)}{9\left(1+\frac{2+\gamma+\frac{7}{36u}}{\log{u}}\right)^{2}}\log{u}
\end{equation}
with $m_0$ as in Lemma~\ref{lem:Mxone_half}.
\end{lemma}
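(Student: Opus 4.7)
The plan is to exploit the identity $h_{X}(s)=1-f_{X}(s)^{2}$ together with the fact that on $\Re\{s\}=3/2$ the mollifier makes $f_{X}$ uniformly small. Writing $\zeta(s)M_{X}(s)=1+\sum_{n>X}a(n)n^{-s}$ with $|a(n)|=|\sum_{d\mid n,\,d\leq X}\mu(d)|\leq d(n)$, I would first derive an explicit pointwise estimate
\[
\left|f_{X}\!\left(\tfrac{3}{2}+\ie t\right)\right|^{2}\leq A(X)\de 9\left(1+\frac{2+\gamma+\frac{7}{36X}}{\log X}\right)^{2}\frac{\log^{2}X}{X}
\]
by partial summation applied to $\sum_{n>X}d(n)/n^{3/2}$ using an explicit form of $\sum_{n\leq X}d(n)=X\log X+(2\gamma-1)X+O(\sqrt{X})$ (the $7/36$ encodes the chosen explicit remainder). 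Since $X=hT\geq u\de hT_{0}$ and $u\mapsto A(u)$ is decreasing on $[e^{3},\infty)$, a range that comfortably contains $u\geq 10^{9}$, one has $A(X)\leq A(u)$ uniformly in $T\geq T_{0}$.

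Second, combining $-\log|1-z|\leq-\log(1-|z|)$ (from $-\log(1-x)=\sum_{k\geq 1}x^{k}/k$) with the monotonicity of $y\mapsto -\log(1-y)/y$ on $[0,1)$, applied to $z=f_{X}(3/2+\ie t)^{2}$, yields
\[
-\log\!\left|h_{X}\!\left(\tfrac{3}{2}+\ie t\right)\right|\leq\frac{-\log(1-A(u))}{A(u)}\left|f_{X}\!\left(\tfrac{3}{2}+\ie t\right)\right|^{2}.
\]
The task therefore reduces to estimating $\int_{H}^{T}|f_{X}(3/2+\ie t)|^{2}\dif{t}$, for which I would apply an explicit Montgomery--Vaughan mean value theorem with the same constant $m_{0}$ as in Lemma~\ref{lem:Mxone_half}:
\[
\int_{0}^{T}\left|f_{X}\!\left(\tfrac{3}{2}+\ie t\right)\right|^{2}\dif{t}\leq T\sum_{n>X}\frac{|a(n)|^{2}}{n^{3}}+2\pi m_{0}\sum_{n>X}\frac{|a(n)|^{2}}{n^{2}}.
\]
Each tail is bounded via partial summation on $\sum_{n\leq X}d(n)^{2}\sim X\log^{3}X/\pi^{2}$. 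Setting $X=hT$ turns the diagonal contribution into something of order $\log^{3}(hT)/(h^{2}T)$ and the off-diagonal into something of order $\log^{3}(hT)/(hT)$; both are decreasing in $T$ once $hT\geq e^{3}$, so they are maximised at $T=T_{0}$ and together give the upper bound $(3/h+8\pi m_{0})(1+3.31/\log u)\log^{3}u/u$.

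Finally, combining the pointwise inequality with the mean-square estimate and using the identity $A(u)\,u/\log^{2}u=B(u)\de 9(1+(2+\gamma+7/(36u))/\log u)^{2}$ to rewrite $\frac{-\log(1-A(u))}{A(u)}\cdot\frac{\log^{3}u}{u}=\frac{-\log(1-A(u))\log u}{B(u)}$ produces exactly the closed form $C_{5}(u,h)$. The main obstacle is the explicit bookkeeping: selecting versions of the divisor-sum asymptotic and of the mean-value inequality whose explicit constants combine to yield the coefficients $7/36$, $3/h$, $8\pi m_{0}$ and $3.31$ as stated, and verifying that the monotonicity arguments used to replace $X=hT$ by $u=hT_{0}$ in both $A$ and in the mean-square bound hold across the entire range $u\geq 10^{9}$.
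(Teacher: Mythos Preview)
Your proposal is correct and follows essentially the same approach as the paper: a pointwise bound $|f_X(3/2+\ie t)|\leq b$, the inequality $-\log|h_X|\leq\dfrac{-\log(1-b^2)}{b^2}|f_X|^2$ via monotonicity of $y\mapsto-\log(1-y)/y$, and then a Montgomery--Vaughan mean-square estimate for $\int_H^T|f_X(3/2+\ie t)|^2\dif t$, with $X=hT$ replaced by $hT_0$ via monotonicity. The only difference is cosmetic: the paper imports the explicit constants $b_6(hT_0,1/2)^2=A(hT_0)$ and the mean-square bound directly from Corollary~4.11 and Lemma~4.13 of Kadiri--Lumley--Ng rather than re-deriving them from partial summation, which is precisely the ``explicit bookkeeping'' you flag as the remaining obstacle.
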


\begin{proof}
We closely follow the proofs of Corollary 4.11 and Lemma 4.13 from~\cite{KADIRI201822}. Note that Corollary~4.11 is valid also for $\eta=1/2$ by the continuation principle. Then $\left|f_{X}(3/2+\ie t)\right|\leq b_{6}(X_0,1/2)$ for $X\geq X_0\geq 10^9$, where the function $b_6(X,\eta)$ is from~\cite[Equation~(4.45)]{KADIRI201822}. Our conditions imply $X_0\geq 10^9$ by setting $X_0=hT_0$. Then 
\[
-\log{\left|h_{X}\left(\frac{3}{2}+\ie t\right)\right|} \leq -\frac{\log{\left(1-b_6(hT_0,1/2)^2\right)}}{b_6(hT_0,1/2)^2}\left|f_{X}\left(\frac{3}{2}+\ie t\right)\right|^{2}.
\]
Moreover, 
\[
\int_{H}^{T}\left|f_{X}\left(\frac{3}{2}+\ie t\right)\right|^{2}\dif{t} \leq \left(\frac{3}{h}+8\pi m_0\right)\left(1+\frac{3.31}{\log{(hT_0)}}\right)\frac{\log^{3}(hT_0)}{hT_0}
\]
by the proof of~\cite[Lemma 4.13]{KADIRI201822}, where we notice that $T-H+2\pi m_0\leq T$ and $b_{11}(X,3)\leq b_{11}(X,2)\leq 1+3.31/\log{X}$ for $X\geq 10^9$, where $b_{11}(X,\tau)$ is from~\cite[Equation~(4.68)]{KADIRI201822}. The statement of Lemma~\ref{lem:3/2} now easily follows.
\end{proof}

\begin{remark}
Note that, for fixed $h$, $0<C_{5}(u,h)\sim \left(3/h+8\pi m_0\right)u^{-1}\log^{3}{u}\to 0$ as $u\to\infty$.
\end{remark}

\begin{lemma}
\label{lem:arg}
Let $T\geq H\geq H_0\geq 1002$, $T\geq T_0\geq e^{2d}$ for $d>0$, $1/2+d/\log{T}\leq\sigma\leq1$ and $X=hT$ for $h\geq 10^{9}/T_0$. Then 
\begin{multline*}
\left| \int_{\sigma-\frac{d}{\log{T}}}^{\frac{3}{2}} \arg h_{X}(u+\ie T) \dif{u}-\int_{\sigma-\frac{d}{\log{T}}}^{\frac{3}{2}} \arg h_{X}(u+\ie H) \dif{u} \right| \leq \left(\frac{3}{2}- \sigma+\frac{d}{\log{T_0}}\right)\times \\
\times\left(C_{6}\left(hT_0,H_0\right)+\frac{2}{\log{2}}\log{(HT)}+\frac{3\pi}{\log{2}}\log{(hT)}\right),
\end{multline*}
where 
\begin{flalign}
\label{eq:C6}
C_{6}(u,H_0) &\de 220 + \frac{\pi}{\log{2}}\log{\left(1+\frac{1}{u^{3/2}}\right)} + \frac{4}{\log{2}}\log{\left(\frac{1}{2\pi}\sqrt{\left(\frac{5}{2H_0}\right)^{2}+\left(1+\frac{2}{H_0}\right)^{2}}\right)} \nonumber \\
&-\frac{2\pi}{\log{2}}\log{\left(1-9\left(1+\frac{2+\gamma+\frac{7}{36u}}{\log{u}}\right)^{2}\frac{\log^{2}{u}}{u}\right)}.
\end{flalign}

\end{lemma}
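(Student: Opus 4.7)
The plan is to reduce the difference of integrals to a supremum bound and then control $|\arg h_X|$ on each horizontal segment by a Backlund--Jensen argument applied to each factor of $h_X=(2-\zeta M_X)\cdot\zeta\cdot M_X$. For $T_1\in\{H,T\}$ and $\sigma_0=\sigma-d/\log T$, I would first pass to
\begin{equation*}
\left|\int_{\sigma_0}^{3/2}\arg h_X(u+\ie T_1)\,\dif{u}\right|\leq\left(\tfrac{3}{2}-\sigma_0\right)\sup_{u\in[\sigma_0,3/2]}\left|\arg h_X(u+\ie T_1)\right|,
\end{equation*}
and note $3/2-\sigma_0\leq 3/2-\sigma+d/\log T_0$ (since $T\geq T_0$), which supplies the leading factor in the claim.

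The next step is to split $\arg h_X=\arg(2-\zeta M_X)+\arg\zeta+\arg M_X$ and to bound each piece by the classical Backlund observation: for a generic factor $g$ in the list, the variation of $\arg g$ along the horizontal segment from $3/2+\ie T_1$ to $u+\ie T_1$ is at most $\pi$ times the number of sign changes of $\Re g$ on the segment, which equals the number of real zeros of the holomorphic function
\[
\tilde g_{T_1}(z)\de\tfrac{1}{2}\bigl(g(z)+\overline{g(\bar z+2\ie T_1)}\bigr)
\]
on the segment. Jensen's formula on the disk $\{|z-(2+\ie T_1)|\leq R\}$ with a slightly smaller disk of radius $r$ just enclosing the segment then gives
\[
n(r)\log(R/r)\leq\log\max_{|z-(2+\ie T_1)|=R}|\tilde g_{T_1}(z)|-\log|\tilde g_{T_1}(2+\ie T_1)|,
\]
and the choice $R/r=2$ produces the $\log 2$ appearing in every denominator of the claimed bound. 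The values of $\arg g$ at the right-hand endpoint $u=3/2$, used to initialize the variation, are controlled by absolute convergence for $g\in\{\zeta,M_X\}$ and by the bound $|f_X(3/2+\ie t)|<1$ established in the proof of Lemma~\ref{lem:3/2} for $g=2-\zeta M_X$.

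The three Jensen inputs then correspond, after the appropriate bookkeeping, to the three additive pieces of the claim. For $g=\zeta$, the bounds $|\zeta(s)|\ll|t|$ on the disk and $|\zeta(2+\ie T_1)|\geq 2-\pi^{2}/6$ contribute to the $\tfrac{2}{\log 2}\log(HT)$ term (the two heights $T_1\in\{H,T\}$ producing $\log H+\log T$). For $g=M_X$, the Dirichlet-polynomial bound $|M_X(s)|\leq\sum_{n\leq X}n^{-1/2}\ll X^{1/2}$ on the disk together with $|M_X(2+\ie T_1)|\geq 2-\pi^{2}/6$ and $X=hT$ contributes to the $\tfrac{3\pi}{\log 2}\log(hT)$ term, which also absorbs the $X^{1/2}$-dependence of the upper bound $|2-\zeta M_X|\ll T_1X^{1/2}$. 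For $g=2-\zeta M_X$, a lower bound on $|(2-\zeta M_X)(2+\ie T_1)|$ (in the spirit of Lemma~\ref{lem:3/2}'s bound $|f_X(3/2+\ie t)|<1$) feeds the $-\log\bigl(1-9(\cdots)\bigr)$ ingredient of $C_6$. The universal constant $220$ and the $H_0$-dependent pieces of $C_6$ collect the boundary arguments at $u=3/2$, the corner estimate near $3/2+\ie H_0$ (accounting for the factor $1+2/H_0$ inside the logarithm), and various geometric factors from the Jensen step.

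The main obstacle will be the explicit numerical bookkeeping: producing uniform lower bounds for $|\zeta(2+\ie T_1)|$, $|M_X(2+\ie T_1)|$ and $|(2-\zeta M_X)(2+\ie T_1)|$ valid for $T_1\in[H_0,T]$; verifying that $H_0\geq 1002$ and $T_0\geq e^{2d}$ suffice to keep every Jensen ratio positive and within the desired regime; and tracking the two-height contributions without loss of constants. The structural argument itself is classical (cf.\ Titchmarsh~\cite[Section~9.4]{titchmarsh1986theory}) and mirrors the corresponding lemmas of~\cite{KADIRI201822}.
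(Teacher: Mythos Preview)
Your outline is essentially the approach the paper intends: the proof in the paper is a one-line reference to Lemma~4.12 of~\cite{KADIRI201822} together with the $|f_X(3/2+\ie t)|$ input from Lemma~\ref{lem:3/2}, and that lemma in~\cite{KADIRI201822} is exactly the Backlund--Jensen argument you describe (reduce to a pointwise bound on $|\arg h_X|$ along each horizontal segment, control the argument by zero-counting for the auxiliary function $\tilde g_{T_1}$, and close with Jensen's inequality on a disc centred at $2+\ie T_1$ with $R/r=2$). Your identification of the leading factor $3/2-\sigma+d/\log T_0$ and of the provenance of the $\log 2$ denominators is correct, and your honest remark that the remaining work is purely numerical bookkeeping matches the paper's ``several numerical simplifications''. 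One small caveat: the precise grouping of factors in~\cite{KADIRI201822} and the location of the lower bound (the $1-9(\cdots)$ term is the bound on $|h_X|=|1-f_X^2|$ at $\sigma=3/2$ from Lemma~\ref{lem:3/2}, not on $|2-\zeta M_X|$ at $\sigma=2$) differ slightly from your attribution, which is why the displayed coefficients mix $\pi/\log 2$ and $1/\log 2$; but this does not affect the method.
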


\begin{proof}
This follows by the proof of Lemma~4.12 from~\cite{KADIRI201822}, by using similar arguments as in the proof of Lemma~\ref{lem:3/2}, and by several numerical simplifications.  
\end{proof}

\begin{lemma}
\label{lem:FX}
Let $T\geq H\geq H_0>1$, $T\geq T_0\geq e^e$, and $\kappa>0$. Also, let $X=hT$ for $h\geq 3\cdot10^{10}/T_0$, and let $w_1=w_1\left(\sigma,1/2,1+\delta/\log{X}\right)$ and $w_2=w_2\left(\sigma,1/2,1+\delta/\log{X}\right)$ for $0<\delta\leq\frac{1}{20}\log{(hT_0)}$ and with $w_1$ and $w_2$ as in~\eqref{eq:w1w2}. Then    
\begin{flalign*}
F_{X}\left(\sigma,H,T,\frac{3}{4}w_1+\frac{1}{2}w_2\right) &\leq L_0^{\frac{2}{2-\sigma}} K_3^{\frac{2\sigma-1}{2-\sigma}}\left(L_1\cdot\left(K_1T\right)^{\frac{3(1-\sigma)}{2-\sigma}}\left(\log{T}\right)^{\frac{4(1-\sigma)}{2-\sigma}}\left(\log{(hT)}\right)^{\frac{1}{2-\sigma}}\right. \\ 
&\left.+ L_2\cdot \left(K_2T\right)^{\frac{3(1-\sigma)}{2-\sigma}}\left(\log{(hT)}\right)^{\frac{2\sigma-1}{2-\sigma}}\right) 
\end{flalign*}
for $1/2\leq\sigma\leq 1$. Functions $L_0$ and $K_3$ are defined by
\begin{equation}
\label{eq:L0}
L_0 = L_0(T_0,H_0,\kappa,h,\delta) \de \frac{\exp{\left(\kappa\left(1+\left(\frac{1}{4T_0}\left(1+\frac{2\delta}{\log{(hT_0)}}\right)\right)^{2}\right)\right)}}{1-H_{0}^{-1}},
\end{equation}
and 
\begin{equation}
\label{eq:K3}
K_3=K_3(T_0,\kappa,h,\delta) \de \int_{0}^{\infty}e^{-u}\left(C_3(hT_0,\delta)+\frac{C_4(hT_0,\delta)}{h}\left(\sqrt{\frac{u}{2\kappa}}+\frac{\pi m_0}{T_0}\right)\right)\dif{u},
\end{equation}
where $C_3(X_0,\delta)$, $C_4(X_0,\delta)$ and $m_0$ are from Lemma~\ref{lem:fxone}. Moreover,
\begin{multline}
\label{eq:K1}
K_1=K_1(T_0,A_0,\kappa,h) \de \left(2a_2(A_0)\sqrt{\frac{3}{4\kappa}}\right)^{\frac{1}{3}}\int_{0}^{\infty}e^{-u}u^{\frac{1}{6}}\left(1+\frac{\log{\left(1+\frac{3u}{4\kappa}\right)}}{2\log{T_0}}\right)^{\frac{4}{3}}\times \\
\times \left(C_1(hT_0)\sqrt{\frac{3u}{4\kappa}}+hC_2(hT_0)\right)^{\frac{2}{3}}\dif{u}
+ \frac{\left(2a_1(A_0)\right)^{\frac{1}{3}}\left(hC_2(hT_0)+\frac{A_0C_1(hT_0)}{T_0}\right)^{\frac{2}{3}}}{T_0^{\frac{1}{3}}\log^{\frac{4}{3}}{T_0}}
\end{multline}
with $C_1(X_0)$ and $C_2(X_0)$ as in Lemma~\ref{lem:Mxone_half}, and $a_1(A_0)$, $a_2(A_0)$ and $A_0$ as in Corollary~\ref{cor:4thPM}, while
\begin{equation}
\label{eq:K2}
K_2=K_2(T_0,A_0,\kappa) \de 2^{\frac{1}{3}}\sqrt{\frac{3}{4\kappa}}\int_{0}^{\infty}e^{-u}\sqrt{u}\dif{u} + 2^{\frac{1}{3}}\frac{4\kappa}{3}\left(\frac{A_0}{T_0}\right)^{3}.
\end{equation}
Finally,
\begin{multline}
\label{eq:L1}
L_1 = L_1(T_0,K_1,K_3,h,\delta)
\de \max\left\{1,K_1^{\frac{3\delta}{\log{(hT_0)}}}\right\}\max\left\{1,K_3^{-\frac{3\delta}{\log{(hT_0)}}}\right\}\times \\
\times\exp{\left(3\delta\max\left\{1,\frac{\log{T_0}}{\log{(hT_0)}}\right\}+4\delta\max\left\{\frac{\log{\log{T_0}}}{\log{(hT_0)}},\frac{\log{\log{T_0}}}{\log{T_0}}\right\}\right)}
\end{multline}
and
\begin{equation}
\label{eq:L2}
L_2 = L_2(T_0,K_2,K_3,h,\delta) \de \max\left\{1,K_2^{\frac{3\delta}{\log{(hT_0)}}}\right\}\max\left\{1,K_3^{-\frac{3\delta}{\log{(hT_0)}}}\right\}\exp{\left(3\delta\max\left\{1,\frac{\log{T_0}}{\log{(hT_0)}}\right\}\right)},
\end{equation}
where $K_1$, $K_2$ and $K_3$ are given by~\eqref{eq:K1},~\eqref{eq:K2} and~\eqref{eq:K3}, respectively.
\end{lemma}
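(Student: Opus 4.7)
The plan is to apply Corollary~\ref{cor:FXbyGabriel} with $\alpha = 1/2$, $\beta = 1 + \delta/\log X$, $\lambda = 3/4$, $\mu = 1/2$, and $A = A_0$, where $X = hT$. These choices are engineered so that $w_1\lambda + w_2\mu$ equals $(2-\sigma)/2$ up to an $O(\delta/\log X)$ correction; after taking the $(w_1\lambda+w_2\mu)^{-1}$-th root, this recovers the target $T$-exponent $3(1-\sigma)/(2-\sigma)$. Since $1/(w_1\lambda+w_2\mu) \leq 2/(2-\sigma)$ and $L_0 \geq 1$, the Gabriel prefactor $\exp(\kappa(\cdots))/(1-1/H)$ is bounded above by $L_0^{2/(2-\sigma)}$ with $L_0$ as in~\eqref{eq:L0} (using $T\geq T_0$, $H\geq H_0$, and $\delta \leq \tfrac12\log(hT_0)$).

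For the $\beta$-integral, I would apply Lemma~\ref{lem:fxone} directly with $T$ replaced by $T\sqrt{u/(2\kappa)}$; integrating the resulting bound against $e^{-u}$ and using $X = hT \geq hT_0$ together with $1/T\leq 1/T_0$ yields the clean bound $K_3 \log^2 X$, with $K_3$ as in~\eqref{eq:K3}.

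For the $\alpha$-integral I would combine the elementary inequality $|\zeta M_X - 1|^{4/3} \leq 2^{1/3}(|\zeta M_X|^{4/3} + 1)$ with H\"older's inequality using conjugate exponents $3$ and $3/2$, producing
\[
\int_A^B \bigl|f_X(\tfrac12 + it)\bigr|^{4/3}\,dt \leq 2^{1/3}\biggl[\bigl(\mathcal{M}_2(A,B)\bigr)^{1/3}\biggl(\int_A^B \bigl|M_X(\tfrac12+it)\bigr|^2\,dt\biggr)^{\!2/3} + (B-A)\biggr].
\]
Corollary~\ref{cor:4thPM} bounds $\mathcal{M}_2$ (giving $a_1(A_0)$ on $[0,A_0]$ and $a_2(A_0) B \log^4 B$ on $[A_0,B]$), and Lemma~\ref{lem:Mxone_half} bounds the $M_X$-integral. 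Setting $B = T\sqrt{3u/(4\kappa)}$ and factoring $(\log B)^{4/3} \leq (\log T)^{4/3}(1 + \log(1+3u/(4\kappa))/(2\log T_0))^{4/3}$, integration against $e^{-u}$ reproduces the two summands of~\eqref{eq:K1}, giving the bound $\leq K_1 T(\log T)^{4/3}(\log X)^{2/3}$ for the main piece. For the length piece, the contribution $A_0$ from $[0,A_0]$ recombines with $\int_{u_0}^\infty e^{-u}(B-A_0)\,du$ (where $u_0 = (4\kappa/3)(A_0/T)^2$) as $A_0(1 - e^{-u_0}) + \int_{u_0}^\infty e^{-u} T\sqrt{3u/(4\kappa)}\,du$; bounding $A_0(1-e^{-u_0}) \leq A_0 u_0 = (4\kappa/3)A_0^3/T^2$ and extending the lower limit to $0$ in the remaining integral yields $K_2 T$ exactly as in~\eqref{eq:K2}.

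Combining everything via Corollary~\ref{cor:FXbyGabriel} and invoking subadditivity $(a+b)^p \leq a^p + b^p$ for $p = w_1\lambda/(w_1\lambda+w_2\mu) \in [0,1]$ splits the $\alpha$-contribution into the $K_1$-piece and $K_2$-piece, giving the two terms in the stated bound. The main obstacle will be bookkeeping the exponent mismatches: the true exponents $w_1\lambda/(w_1\lambda+w_2\mu)$ and $w_2\mu/(w_1\lambda+w_2\mu)$ differ from $3(1-\sigma)/(2-\sigma)$ and $(2\sigma-1)/(2-\sigma)$ by $O(\delta/\log X)$, so each $K_i$ and $(\log T)$ appears with a slightly shifted exponent. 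The factors $\max\{1, K_i^{\pm 3\delta/\log(hT_0)}\}$ and $\exp(3\delta\max\{\ldots\})$ appearing in~\eqref{eq:L1}--\eqref{eq:L2} are precisely designed to absorb these discrepancies, so the final form of the lemma follows by straightforward if tedious algebraic rearrangement.
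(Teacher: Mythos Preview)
Your proposal is correct and follows essentially the same approach as the paper's proof: apply Corollary~\ref{cor:FXbyGabriel} with the stated parameters, bound the $\beta$-integral via Lemma~\ref{lem:fxone} to produce $K_3\log^2(hT)$, bound the $\alpha$-integral via Minkowski/H\"older together with Corollary~\ref{cor:4thPM} and Lemma~\ref{lem:Mxone_half} to produce $K_1T(\log T)^{4/3}(\log(hT))^{2/3}+K_2T$, split with subadditivity, and finally absorb the $O(\delta/\log(hT))$ exponent discrepancies into the $L_i$ factors exactly as you describe. The paper carries out the final bookkeeping step explicitly (your ``tedious algebraic rearrangement'') via a sequence of elementary inequalities on the ratios $\tfrac{3}{4}w_1/w$, $\tfrac{1}{2}w_2/w$, $w_1/w$, etc., but there is no additional idea beyond what you have outlined.
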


\begin{proof}
We are using Corollary~\ref{cor:FXbyGabriel} with $A=A_0$, $\alpha=1/2$, $\beta=1+\delta/\log{X}$, $\lambda=3/4$ and $\mu=1/2$. Firstly, we estimate the first term in the second line of~\eqref{eq:FXbyGabriel2}. Note that
\begin{equation}
\label{eq:MinHol}
F_{X}\left(\alpha,U,V,\lambda\right) \leq 2^{\frac{1}{3}}\mathcal{M}_{2}(U,V)^{\frac{1}{3}}\left(\int_{0}^{V}\left|M_{X}\left(\frac{1}{2}+\ie t\right)\right|^{2}\dif{t}\right)^{\frac{2}{3}} + 2^{\frac{1}{3}}(V-U)
\end{equation}
for $V\geq U\geq 0$ by Minkowski's and H\"{o}lder's inequalities. Take $U=A_0$ and $V=T\sqrt{\lambda u/\kappa}$. Note that $u\geq(\kappa/\lambda)(A/T)^2$ due to the domain of integration in~\eqref{eq:FXbyGabriel2}, implying $V\geq U$. Then
\[
\mathcal{M}_{2}(U,V) \leq a_2(A_0)\sqrt{\frac{3u}{4\kappa}}\left(1+\frac{\log{\left(1+\frac{3u}{4\kappa}\right)}}{2\log{T_0}}\right)^{4}T\log^{4}{T}
\]
by Corollary~\ref{cor:4thPM}, and
\[
\int_{0}^{V}\left|M_{X}\left(\frac{1}{2}+\ie t\right)\right|^{2}\dif{t}\leq \left(C_1(hT_0)\sqrt{\frac{3u}{4\kappa}}+hC_2(hT_0)\right)T\log{(hT)}
\]
by Lemma~\ref{lem:Mxone_half}, where we set $X_0=hT_0$. Also,
\[
F_{X}\left(\alpha,0,A,\lambda\right) \leq \left(2a_1(A_0)\right)^{\frac{1}{3}}\left(hC_2(hT_0)+\frac{A_0C_1(hT_0)}{T_0}\right)^{\frac{2}{3}}T^{\frac{2}{3}}\log^{\frac{2}{3}}{(hT)} + 2^{\frac{1}{3}}A_0
\]
by~\eqref{eq:MinHol} for $U=0$ and $V=A_0$, and then again by Corollary~\ref{cor:4thPM} and Lemma~\ref{lem:Mxone_half}. Therefore,
\[
F_{X}\left(\alpha,0,A,\lambda\right)+\int_{\frac{\kappa}{\lambda}\left(\frac{A}{T}\right)^2}^{\infty}e^{-u}F_{X}\left(\alpha,A,T\sqrt{\frac{\lambda u}{\kappa}},\lambda\right)\dif{u} \leq K_1T\log^{\frac{4}{3}}{T}\cdot\log^{\frac{2}{3}}{(hT)} + K_2T,
\]
where $K_1$ and $K_2$ are defined by~\eqref{eq:K1} and~\eqref{eq:K2}, respectively. Secondly, for the second term in the second line of~\eqref{eq:FXbyGabriel2} we simply have
\[
\int_{\frac{\kappa}{\mu}\left(\frac{A}{T}\right)^2}^{\infty}e^{-u}F_{X}\left(\beta,0,T\sqrt{\frac{\mu u}{\kappa}},\mu\right)\dif{u} \leq K_3\log(hT)
\]
by Lemma~\ref{lem:fxone}, where $K_3$ is defined by~\eqref{eq:K3}. Remembering that $(a+b)^{x}\leq a^{x}+b^{x}$ for $x\in(0,1]$ and $a,b\geq 0$, and writing $w=\frac{3}{4}w_{1}+\frac{1}{2}w_{2}$, we obtain
\begin{multline}
\label{eq:FXraw}
F_{X}\left(\sigma,H,T,w\right) \leq \left(\frac{\exp{\left(\kappa\left(1+\left(\frac{1}{4T}\left(1+\frac{2\delta}{\log{(hT)}}\right)\right)^{2}\right)\right)}}{1-H^{-1}}\right)^{\frac{1}{w}}\times \\
\times\left(\left(K_1^{\frac{\frac{3}{4}w_1}{w}}K_3^{\frac{\frac{1}{2}w_2}{w}}\right)T^{\frac{\frac{3}{4}w_1}{w}}\left(\log{T}\right)^{\frac{w_1}{w}}\left(\log{(hT)}\right)^{\frac{\frac{1}{2}\left(w_1+w_2\right)}{w}} + \left(K_2^{\frac{\frac{3}{4}w_1}{w}}K_3^{\frac{\frac{1}{2}w_2}{w}}\right)T^{\frac{\frac{3}{4}w_1}{w}}\left(\log{(hT)}\right)^{\frac{\frac{1}{2}w_2}{w}}\right)
\end{multline}
by Corollary~\ref{cor:FXbyGabriel}. We need to estimate the resulting terms on the right-hand side of~\eqref{eq:FXraw}. In what follows, remember that $\sigma\in[1/2,1]$. Because
\[
\frac{3(1-\sigma)}{2-\sigma}\leq \frac{\frac{3}{4}w_1}{w} = \frac{3(1-\sigma)+\frac{3\delta}{\log{(hT)}}}{2-\sigma+\frac{3\delta}{\log{(hT)}}} \leq \frac{3(1-\sigma)}{2-\sigma} + \frac{3\delta}{\log{(hT_0)}}, 
\]
we have
\begin{equation}
\label{eq:K1raw}
K_1^{\frac{\frac{3}{4}w_1}{w}} \leq \max\left\{1,K_1^{\frac{3\delta}{\log{(hT_0)}}}\right\}K_1^{\frac{3(1-\sigma)}{2-\sigma}}
\end{equation}
and
\begin{equation}
\label{eq:Traw}
T^{\frac{\frac{3}{4}w_1}{w}} \leq \exp{\left(\frac{3\delta\log{T}}{\log{(hT)}}\right)}T^{\frac{3(1-\sigma)}{2-\sigma}} \leq \exp{\left(3\delta\max\left\{1,\frac{\log{T_0}}{\log{(hT_0)}}\right\}\right)}T^{\frac{3(1-\sigma)}{2-\sigma}}.
\end{equation}
Inequality~\eqref{eq:K1raw} also holds for $K_2$ in place of $K_1$. Furthermore, because
\[
\frac{2\sigma-1}{2-\sigma} - \frac{3\delta}{\log{(hT_0)}}\leq \frac{\frac{1}{2}w_2}{w} = \frac{2\sigma-1}{2-\sigma+\frac{3\delta}{\log{(hT)}}} \leq \frac{2\sigma-1}{2-\sigma},
\]
we also have
\begin{equation}
\label{eq:K3raw}
K_3^{\frac{\frac{1}{2}w_2}{w}} \leq \max\left\{1,K_3^{-\frac{3\delta}{\log{(hT_0)}}}\right\}K_3^{\frac{2\sigma-1}{2-\sigma}}, \quad \left(\log{(hT)}\right)^{\frac{\frac{1}{2}w_2}{w}} \leq \left(\log{(hT)}\right)^{\frac{2\sigma-1}{2-\sigma}}
\end{equation}
since $\log{(hT)}\geq 1$. Turning to the other log-terms, because $\log{T}\geq 1$ and
\[
\frac{w_1}{w} = \frac{4(1-\sigma)+\frac{4\delta}{\log{(hT)}}}{2-\sigma+\frac{3\delta}{\log{(hT)}}} \leq \frac{4(1-\sigma)}{2-\sigma}+\frac{4\delta}{\log{(hT)}},
\]
we have
\begin{equation}
\label{eq:logTraw}
\left(\log{T}\right)^{\frac{w_1}{w}} \leq \exp{\left(\frac{4\delta\log{\log{T}}}{\log{(hT)}}\right)}\left(\log{T}\right)^{\frac{4(1-\sigma)}{2-\sigma}} \leq \exp{\left(4\delta\max\left\{\frac{\log{\log{T_0}}}{\log{(hT_0)}},\frac{\log{\log{T_0}}}{\log{T_0}}\right\}\right)}\left(\log{T}\right)^{\frac{4(1-\sigma)}{2-\sigma}},
\end{equation}
where we also used $T_0\geq e^{e}$. Moreover, $w_1+w_2=1$ and
\begin{equation}
\label{eq:recipw}  
\frac{1}{w} = \frac{2+\frac{4\delta}{\log{(hT)}}}{2-\sigma+\frac{3\delta}{\log{(hT)}}} \leq \frac{2}{2-\sigma},
\end{equation}
which implies
\begin{equation}
\label{eq:loghTraw}
\left(\log{(hT)}\right)^{\frac{\frac{1}{2}\left(w_1+w_2\right)}{w}} \leq \left(\log{(hT)}\right)^{\frac{1}{2-\sigma}}.
\end{equation}
In addition, estimate~\eqref{eq:recipw} is used to bound the first term on the right-hand side of~\eqref{eq:FXraw}, and this contributes $L_0$ from~\eqref{eq:L0} to the final estimate. Inequalities~\eqref{eq:K1raw}--\eqref{eq:logTraw} and~\eqref{eq:loghTraw} contribute to $L_1$ from~\eqref{eq:L1}, while inequalities~\eqref{eq:K1raw} (for $K_2$ in place of $K_1$),~\eqref{eq:Traw} and~\eqref{eq:K3raw}, contribute to $L_2$ from~\eqref{eq:L2}. The proof of Lemma~\ref{lem:FX} is thus complete.
\end{proof}

\begin{proof}[Proof of Theorem~\ref{thm:mainZD}]
We are using~\eqref{eq:Littlewood} with $\sigma'=\sigma-d/\log{T}$ and $\sigma''=3/2$. Note that $\sigma'\in(1/2,1)$. Temporarily assume the non-vanishing condition on $h_{X}(s)$. The second integral in~\eqref{eq:Littlewood} is estimated by Lemma~\ref{lem:3/2}, while the last two integrals are estimated by Lemma~\ref{lem:arg}. Note that the conditions of the lemmas are satisfied by the conditions of Theorem~\ref{thm:mainZD}. Therefore, it remains to estimate the first integral in~\eqref{eq:Littlewood}. Because $\log{\left(1+u^2\right)}\leq u^{a}$ for $u\geq 0$ and $a\in[4/3,2]$, we have
\[
\log{\left|h_{X}(s)\right|} \leq \log{\left(1+\left|f_{X}(s)\right|^{2}\right)} \leq \left|f_{X}(s)\right|^{\frac{1}{w}}, \quad w = \frac{3}{4}w_1\left(\sigma',\frac{1}{2},1+\frac{\delta}{\log{X}}\right)+\frac{1}{2}w_2\left(\sigma',\frac{1}{2},1+\frac{\delta}{\log{X}}\right)
\]
for $\delta>0$, and with $w_1$ and $w_2$ as in~\eqref{eq:w1w2}. Thus,
\[
\int_{H}^{T}\log{\left|h_{X}\left(\sigma'+\ie t\right)\right|}\dif{t} \leq F_{X}\left(\sigma',H,T,w\right).
\]
Taking $X=hT$, we can see that all other conditions of Lemma~\ref{lem:FX} are satisfied by the conditions of Theorem~\ref{thm:mainZD}. Therefore,
\begin{flalign*}
F_{X}&\left(\sigma',H,T,w\right) \leq \max\left\{1,L_0^{-\frac{2d}{\log{T_0}}}\right\}\max\left\{1,K_3^{-\frac{3d}{\log{T_0}}}\right\}L_0^{\frac{2}{2-\sigma}} K_3^{\frac{2\sigma-1}{2-\sigma}}\times \\
&\times\left(L_1\max\left\{1,K_1^{\frac{3d}{\log{T_0}}}\right\}\exp{\left(d\left(3+\frac{4\log{\log{T_0}}}{\log{T_0}}\right)\right)}\left(K_1T\right)^{\frac{3(1-\sigma)}{2-\sigma}}\left(\log{T}\right)^{\frac{4(1-\sigma)}{2-\sigma}}\left(\log{(hT)}\right)^{\frac{1}{2-\sigma}}\right. \\ 
&\left.+ L_2e^{3d}\max\left\{1,K_2^{\frac{3d}{\log{T_0}}}\right\} \left(K_2T\right)^{\frac{3(1-\sigma)}{2-\sigma}}\left(\log{(hT)}\right)^{\frac{2\sigma-1}{2-\sigma}}\right),
\end{flalign*}
where $L_0$, $L_1$, $L_2$ and $K_1$, $K_2$, $K_3$ are as in Lemma~\ref{lem:FX}. It is not hard to see that the statement of Theorem~\ref{thm:mainZD} now follows after combining all estimates. Also, the non-vanishing condition can be removed by invoking the standard perturbation argument since the right-hand side of the main estimate of Theorem~\ref{thm:mainZD} is a continuous function in $\sigma$, $T$ and $H$.
\end{proof}

\section{The fourth power moment of \texorpdfstring{$\zeta(s)$}{zeta(s)} on the critical line}
\label{sec:4thPM}

In this section, we prove Theorem~\ref{thm:4thPMExplicitGeneral}. The proof, which is presented through several subsections in Section~\ref{subsec:ProofThm2}, is technically quite involved and requires estimates on various sums that include the divisor function $d(n)$ and its square. These results, which might be of interest in their own right, are gathered in Section~\ref{subsec:divisorSums}. Finally, we present the method together with the plan of the proof of Theorem~\ref{thm:4thPMExplicitGeneral} in the following section.

\subsection{The method}
\label{subsec:4thPMmethod}

Ingham's proof of~\eqref{eq:4thPMGeneral} is based on an approximate functional equation for $\zeta^2(s)$ in symmetric form and is relatively complicated. A shorter proof based on a Tauberian theorem for integrals but that produces only an asymptotic estimate (without an error term) can be found in~\cite[Section~7]{titchmarsh1986theory}. However, a very elegant proof of~\eqref{eq:4thPMGeneral} was found by Ramachandra~\cite{Ramachandra75}. His proof, which is reproduced and extended in~\cite[Section~5.3]{Ivic}, is based on a qualitative version of the following mean value theorem for Dirichlet polynomials.

\begin{lemma}
\label{lem:MV}
Let $T_2\geq T_1>0$ and let $\{a_n\}_{n=1}^{\infty}$ be a sequence of complex numbers. Then
\begin{equation}
\label{eq:MV}
\left|\int_{T_1}^{T_2}\left|\sum_{n=1}^{\infty}a_n n^{\ie t}\right|^{2}\dif{t} - \left(T_2-T_1\right)\sum_{n=1}^{\infty}|a_n|^2\right| \leq 2\pi m_0\sum_{n=1}^{\infty}\left(n+\frac{1}{2}\right)|a_n|^{2},
\end{equation}
where $m_0$ is from Lemma~\ref{lem:Mxone_half}, provided that $\sum_{n=1}^{\infty}|a_n|\left(1+n|a_n|\right)$ converges.
\end{lemma}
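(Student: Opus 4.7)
The plan is to expand the square, integrate term by term, and bound the resulting off-diagonal contribution by an explicit Hilbert-type inequality. The convergence hypothesis ensures that
\begin{equation*}
\left|\sum_{n=1}^{\infty} a_n n^{\ie t}\right|^{2} = \sum_{m=1}^\infty\sum_{n=1}^\infty a_m \overline{a_n}\,(m/n)^{\ie t}
\end{equation*}
is absolutely convergent uniformly in $t$, so I can exchange this double sum with $\int_{T_1}^{T_2}$. The diagonal terms $m=n$ then contribute exactly $(T_2-T_1)\sum_n |a_n|^2$, which is the main term on the right of~\eqref{eq:MV}.

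For the off-diagonal part, the elementary identity
\begin{equation*}
\int_{T_1}^{T_2} (m/n)^{\ie t}\,\dif{t} = \frac{m^{\ie T_2}n^{-\ie T_2} - m^{\ie T_1}n^{-\ie T_1}}{\ie(\log m - \log n)}
\end{equation*}
splits the error as the difference, for $j\in\{1,2\}$, of the two sums
\begin{equation*}
S_j \de \sum_{\substack{m,n\geq 1\\ m\neq n}} \frac{w_m^{(j)}\,\overline{w_n^{(j)}}}{\log m - \log n}, \qquad w_n^{(j)} \de a_n n^{\ie T_j},
\end{equation*}
since $\overline{a_n n^{\ie T_j}} = \bar a_n n^{-\ie T_j}$. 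Crucially $|w_n^{(j)}|=|a_n|$, so each $S_j$ is in the exact Hilbert form to which I would apply Preissmann's sharpening of the Montgomery--Vaughan inequality: for any complex sequence $\{w_n\}$ and distinct reals $\{\lambda_n\}$ with $\delta_n\de\min_{k\neq n}|\lambda_k-\lambda_n|$,
\begin{equation*}
\left|\sum_{m\neq n}\frac{w_m\overline{w_n}}{\lambda_m-\lambda_n}\right| \leq \pi m_0 \sum_{n}\frac{|w_n|^2}{\delta_n},
\end{equation*}
with $m_0$ as in Lemma~\ref{lem:Mxone_half}.

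Specialising to $\lambda_n=\log n$ gives $\delta_n = \log((n+1)/n)$, and the elementary inequality $\log(1+x)\geq 2x/(2+x)$ evaluated at $x=1/n$ yields $1/\delta_n \leq n+1/2$. Therefore $|S_j|\leq \pi m_0 \sum_n (n+1/2)|a_n|^2$ for each $j$, and the triangle inequality combined with $|\ie|=1$ produces the claimed bound $2\pi m_0\sum_n(n+1/2)|a_n|^2$. The only non-elementary ingredient is Preissmann's explicit Hilbert inequality; the principal obstacle I foresee is purely the bookkeeping needed to justify the exchange of the countable sums, the integral, and the Hilbert inequality applied to an a~priori infinite sequence, which is exactly what the hypothesis $\sum |a_n|(1+n|a_n|)<\infty$ is tailored to guarantee, since it yields both $\sum |a_n|<\infty$ (absolute convergence of the Dirichlet polynomial) and $\sum n|a_n|^2<\infty$ (finiteness of the target bound).
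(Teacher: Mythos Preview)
Your proposal is correct and follows essentially the same approach as the paper: expand the square, integrate, and bound the off-diagonal bilinear form by applying Preissmann's Hilbert inequality once to the $T_2$-piece and once to the $T_1$-piece, together with the elementary bound $\bigl(\log\frac{n+1}{n}\bigr)^{-1}\le n+\tfrac12$. The only cosmetic difference is that the paper truncates to $n\le X$ and lets $X\to\infty$ at the end, whereas you work directly with the infinite series and invoke the convergence hypothesis to justify the interchanges; both are valid and lead to the identical constant $2\pi m_0$.
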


\begin{proof}
Let $X\geq 1$. We have
\[
\int_{T_1}^{T_2}\left|\sum_{n\leq X}a_n n^{\ie t}\right|^{2}\dif{t} = \left(T_2-T_1\right)\sum_{n\leq X}|a_n|^{2} - \ie\mathop{\sum\sum}_{1\leq n\neq m\leq X}a_n\overline{a_m}\frac{(n/m)^{\ie T_2}-(n/m)^{\ie T_1}}{\log{n}-\log{m}}.
\]
Inequality~\eqref{eq:MV} now follows after employing~\cite[Inequality (D)]{Preissmann} twice on the above double sum, noticing that $\left(\log{\frac{n+1}{n}}\right)^{-1}\leq n+\frac{1}{2}$ for all $n\in\mathbb{N}$, and then taking $X\to\infty$ since all series are convergent due to the assumption. 
\end{proof}

The functional equation for $\zeta(s)$ can be written as $\zeta(s)=\chi(s)\zeta(1-s)$, where
\begin{equation}
\label{eq:chiDef}
\chi(s) \de \frac{(2\pi)^{s}}{2\Gamma(s)\cos{\left(\frac{\pi s}{2}\right)}}.
\end{equation}
This is a meromorphic function on $\C$ with only simple poles at $s=2k-1$ for $k\in\mathbb{N}$. Also note that the functional equation implies $\chi(1/2+\ie t)\chi(1/2-\ie t)=\left|\chi(1/2\pm \ie t)\right|^{2}=1$. The starting point in Ramachandra's proof (which itself resembles the reflection principle~\cite[Section~4.4]{Ivic} or a half-way derivation of a weighted approximate functional equation~\cite[Section~4.5]{IvicZFunctionBook}) is to use the Mellin integral for $e^{-x}$ to get
\[
\sum_{n=1}^{\infty}\frac{d(n)}{n^{1/2+\ie t}}e^{-n/T} = \frac{1}{2\pi\ie}\int_{2-\ie\infty}^{2+\ie\infty}\zeta^{2}\left(\frac{1}{2}+\ie t+z\right)\Gamma(z)T^{z}\dif{z}
\]
for $T\geq 1$. By moving the line of integration to $\Re\{z\}=c_1\in(-1,-1/2)$ we get in addition to
\begin{equation}
\label{eq:intc1}
\frac{1}{2\pi\ie}\int_{c_1-\ie\infty}^{c_1+\ie\infty}\zeta^{2}\left(\frac{1}{2}+\ie t+z\right)\Gamma(z)T^{z}\dif{z}
\end{equation}
also two terms that come from the simple pole of $\Gamma(z)$ at $z=0$ and double pole of $\zeta^2(1/2+\ie t+z)$ at $z=1/2-\ie t$. Next, the integral~\eqref{eq:intc1} can be written as
\begin{equation}
\label{eq:intc12}
\frac{1}{2\pi\ie}\int_{c_1-\ie\infty}^{c_1+\ie\infty}\chi^{2}\left(\frac{1}{2}+\ie t+z\right)\Gamma(z)T^{z}\left(\left(\sum_{n\leq T}+\sum_{n>T}\right)\frac{d(n)}{n^{1/2-\ie t-z}}\right)\dif{z}
\end{equation}
due to the functional equation. By moving the line of integration to $\Re\{z\}=c_2\in(0,1/2)$ only for the first integral (implied by the first sum) in~\eqref{eq:intc12} we get, beside the new integral, also another term that comes from the simple pole of $\Gamma(z)$ at $z=0$. After multiplying the final equation by $\chi^{-1}(1/2+\ie t)$, and then truncating some sums and integrals, it is not hard to see that we obtain 
\begin{equation}
\label{eq:zeta2}
\chi^{-1}(1/2+\ie t)\zeta^{2}(1/2+\ie t)=\left|\zeta(1/2+\ie t)\right|^{2}=\sum_{k=0}^{6}J_k(t),
\end{equation}
where
\begin{gather*}
J_1(t) \de \chi{\left(\frac{1}{2}+\ie t\right)}\sum_{n\leq T}\frac{d(n)}{n^{1/2-\ie t}}, \quad J_2(t)\de \overline{J_1(t)}, \quad J_3(t)\de \chi{\left(\frac{1}{2}-\ie t\right)}\sum_{n>T}\frac{d(n)}{n^{1/2+\ie t}}e^{-n/T}, \\
J_4(t) \de \chi{\left(\frac{1}{2}-\ie t\right)}\sum_{n\leq T}\frac{d(n)}{n^{1/2+\ie t}}\left(e^{-n/T}-1\right), \quad 
J_5(t)\de -\frac{1}{2\pi}\chi{\left(\frac{1}{2}-\ie t\right)}\int_{-\mathfrak{a}_1\left(\log{T}\right)^{\mathfrak{a}_2}}^{\mathfrak{a}_1\left(\log{T}\right)^{\mathfrak{a}_2}}P(0,1,c_1;u)\dif{u}, \\
J_6(t)\de -\frac{1}{2\pi}\chi{\left(\frac{1}{2}-\ie t\right)}\int_{-\mathfrak{b}_1\left(\log{T}\right)^{\mathfrak{b}_2}}^{\mathfrak{b}_1\left(\log{T}\right)^{\mathfrak{b}_2}}P(1,0,c_2;u)\dif{u}
\end{gather*}
and
\begin{multline*}
J_0(t) \de -\frac{1}{2\pi}\chi{\left(\frac{1}{2}-\ie t\right)}\Biggl(\left(\int_{-\infty}^{-\mathfrak{a}_1\left(\log{T}\right)^{\mathfrak{a}_2}}+\int_{\mathfrak{a}_1\left(\log{T}\right)^{\mathfrak{a}_2}}^{\infty}\right)P(0,1,c_1;u)\dif{u} \\
\left.+\left(\int_{-\infty}^{-\mathfrak{b}_1\left(\log{T}\right)^{\mathfrak{b}_2}}+\int_{\mathfrak{b}_1\left(\log{T}\right)^{\mathfrak{b}_2}}^{\infty}\right)P(1,0,c_2;u)\dif{u}+2\pi \Res_{z=\frac{1}{2}-\ie t}\left(\zeta^{2}{\left(\frac{1}{2}+\ie t+z\right)}\Gamma(z)T^{z}\right)\right)
\end{multline*}
for $\mathfrak{a}_1>0$, $\mathfrak{a}_2>1$, $\mathfrak{b}_1>0$, $\mathfrak{b}_2>1$ and
\[
P(a,b,c;u)\de \chi^{2}\left(\frac{1}{2}+c+\ie(t+u)\right)\Gamma\left(c+\ie u\right)T^{c+\ie u}\left(\left(a\sum_{n\leq T}+b\sum_{n>T}\right)\frac{d(n)}{n^{\frac{1}{2}-c-\ie(t+u)}}\right).
\]
Squaring~\eqref{eq:zeta2}, and then integrating the result from $T$ to $2T$ gives
\begin{equation}
\label{eq:4thPMmainEst}   
\mathcal{M}_{2}(T,2T) = \underbrace{2\int_{T}^{2T}\left|J_1(t)\right|^{2}\dif{t}}_{\text{I. part}} + \underbrace{\int_{T}^{2T}\left(J_1^2(t) + J_2^2(t)\right)\dif{t}}_{\text{II. part}} + \mathcal{R},
\end{equation}
where
\begin{multline}
\label{eq:4thPMtermR}
|\mathcal{R}| \leq \underbrace{2\sum_{k=3}^{6}\left|\int_{T}^{2T}\left(J_1(t)+J_2(t)\right)J_{k}(t)\dif{t}\right|}_{\text{III. part}} + \underbrace{\sum_{k=3}^{6}\int_{T}^{2T}\left|J_k(t)\right|^{2}\dif{t}}_{\text{IV. part}} \\ 
+ \underbrace{2\mathop{\sum\sum}_{3\leq n<m\leq 6}\int_{T}^{2T}\left|J_{n}(t)J_{m}(t)\right|\dif{t}}_{\text{V. part}} 
+ \underbrace{2\int_{T}^{2T}\left|J_0(t)\right|\cdot\left|\zeta\left(\frac{1}{2}+\ie t\right)\right|^{2}\dif{t} + \int_{T}^{2T}\left|J_0(t)\right|^{2}\dif{t}}_{\text{VI. part}}.
\end{multline}
The curly brackets in~\eqref{eq:4thPMmainEst} and~\eqref{eq:4thPMtermR} indicate the plan of the proof of Theorem~\ref{thm:4thPMExplicitGeneral}. Precisely, the seemingly unnatural appearance of the second and the third part allows us to obtain an asymptotically correct estimate~\eqref{eq:4thPMExplicitGeneral} since we can show that all terms, except the first one, are $\ll T\log^{3}{T}$. However, if one were interested in obtaining just an upper bound like~\eqref{eq:4thPMExplicitGeneral2}, then ~\eqref{eq:4thPMmainEst} and~\eqref{eq:4thPMtermR} can be simplified to
\begin{multline}
\label{eq:4thPMmainEst2}
\mathcal{M}_{2}(T,2T) \leq 4\int_{T}^{2T}\left|J_1(t)\right|^{2}\dif{t} + \sum_{k=3}^{6}\int_{T}^{2T}\left|J_k(t)\right|^{2}\dif{t} + 4\sum_{k=3}^{6}\int_{T}^{2T}\left|J_1(t)J_{k}(t)\right|\dif{t} \\
+ 2\mathop{\sum\sum}_{3\leq n<m\leq 6}\int_{T}^{2T}\left|J_{n}(t)J_{m}(t)\right|\dif{t}
+ 2\int_{T}^{2T}\left|J_0(t)\right|\cdot\left|\zeta\left(\frac{1}{2}+\ie t\right)\right|^{2}\dif{t} + \int_{T}^{2T}\left|J_0(t)\right|^{2}\dif{t}.
\end{multline}
Before proceeding to the proof, we need first some auxiliary results for various sums that contain $d(n)$ and $d^{2}(n)$, together with some estimates for the moduli of $\Gamma(z)$, $\chi(z)$ and $\zeta(1/2+\ie t)$.

\subsection{Some results on various sums that contain \texorpdfstring{$d(n)$}{d(n)} and \texorpdfstring{$d^{2}(n)$}{d2(n)}}
\label{subsec:divisorSums}

In this section, we are going to prove several estimates on sums that involve $d(n)$ and $d^{2}(n)$, and are crucial for the estimation of terms in~\eqref{eq:4thPMmainEst} and~\eqref{eq:4thPMtermR}.

\begin{lemma}
\label{lem:d}
Let $x\geq x_0\geq 2$. Then
\[
\sum_{n\leq x}\frac{d(n)}{n^\sigma} \leq \frac{\mathcal{D}_1(x_0,\sigma_0)}{1-\sigma}x^{1-\sigma}\log{x}, \quad \sigma_0\leq\sigma\leq 1/2,\; \sigma_0\leq 0,
\]
and
\[
\sum_{n>x}\frac{d(n)}{n^\sigma} \leq \frac{\mathcal{D}_{2}(x_0,\sigma_0)}{(\sigma_0-1)^{2}}x^{1-\sigma}\log{x}, \quad \sigma\geq\sigma_0>1,
\]
where
\begin{gather}
\mathcal{D}_1(x_0,\sigma_0) \de 1 + \frac{2\gamma-1-\frac{\sigma_0}{1-\sigma_0}}{\log{x_0}} + \frac{(1-\sigma_0)\left(\frac{1}{2}+|\sigma_0|-\sigma_0\right)}{\sqrt{x_0}} + \frac{1-2\gamma\sigma_0}{\sqrt{x_0}\log{x_0}}, \label{eq:D1} \\ \mathcal{D}_{2}(x_0,\sigma_0)\de \sigma_0-1+\frac{1+2\gamma(\sigma_0-1)}{\log{x_0}}+\frac{4\sigma_0-1}{(2\sigma_0-1)\sqrt{x_0}\log{x_0}} \label{eq:D2}
\end{gather}
and $\gamma$ is the Euler--Mascheroni constant.
\end{lemma}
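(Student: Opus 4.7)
The plan is to establish both estimates by Abel summation combined with an explicit form of the classical Dirichlet divisor identity
$$D(x) := \sum_{n \leq x} d(n) = x\log x + (2\gamma-1)x + \Delta(x), \qquad |\Delta(x)| \leq C\sqrt{x} \quad (x \geq 1),$$
with an explicit constant $C$ (standard from the hyperbola method). For the first estimate, Abel summation gives
$$\sum_{n \leq x} \frac{d(n)}{n^\sigma} = \frac{D(x)}{x^\sigma} + \sigma \int_1^x \frac{D(u)}{u^{\sigma+1}}\, du.$$
Substituting the asymptotic for $D(u)$ and evaluating the elementary integrals via the antiderivative $\int u^{-\sigma}\log u\, du = \frac{u^{1-\sigma}\log u}{1-\sigma} - \frac{u^{1-\sigma}}{(1-\sigma)^2}$, the boundary term at $u = x$ combines with the leading piece of the integrated main term to produce the principal contribution $\frac{x^{1-\sigma}\log x}{1-\sigma}$. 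Dividing through by this quantity turns the residual subleading main pieces (the $(2\gamma-1)$-term and the $-\sigma/(1-\sigma)^2$-term) into $O(1/\log x_0)$ corrections and the $\Delta(u)$-contribution into an $O(1/\sqrt{x_0})$ correction, which I would package into the explicit constant $\mathcal{D}_1(x_0, \sigma_0)$ of~\eqref{eq:D1}.

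For the second estimate, the series $\sum d(n)/n^\sigma$ converges since $\sigma \geq \sigma_0 > 1$, so $D(y)/y^\sigma \to 0$ as $y \to \infty$ and Abel summation applied to the tail yields
$$\sum_{n > x} \frac{d(n)}{n^\sigma} = -\frac{D(x)}{x^\sigma} + \sigma \int_x^\infty \frac{D(u)}{u^{\sigma+1}}\, du.$$
The main-term integrals are again elementary; the boundary $-D(x)/x^\sigma$ cancels the leading $\frac{\sigma}{\sigma-1}x^{1-\sigma}\log x$ down to $\frac{x^{1-\sigma}\log x}{\sigma-1}$. The $\Delta$-integral is bounded by $|\sigma| C \int_x^\infty u^{-\sigma-1/2}\, du = \frac{|\sigma| C x^{1/2-\sigma}}{\sigma-1/2}$, and using $\sigma - 1/2 \geq (2\sigma_0-1)/2$ explains the factor $1/(2\sigma_0-1)$ appearing in~\eqref{eq:D2}. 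To match the target form $\mathcal{D}_2(x_0,\sigma_0)/(\sigma_0-1)^2$, I would multiply and divide by $(\sigma-1)$: one factor $(\sigma-1)^{-1} \leq (\sigma_0-1)^{-1}$ is absorbed into the $(\sigma_0-1)^{-2}$ prefactor in front, while the remaining factor $(\sigma-1)\cdot(\sigma_0-1)^{-1}$ generates the leading $\sigma_0 - 1$ piece of $\mathcal{D}_2$.

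The main obstacle is purely bookkeeping: every lower-order term must be re-expressed as a constant depending only on $x_0$ and $\sigma_0$ times $x^{1-\sigma}\log x$, and the $\sigma$-dependence must be uniformised across the interval $[\sigma_0,1/2]$ (respectively $[\sigma_0,\infty)$) rather than estimated pointwise. The negative-$\sigma$ regime in the first estimate is the most delicate, because $|\sigma|$ then differs from $\sigma$ and the integral $\int_1^x |\Delta(u)| u^{-\sigma-1}\,du$ picks up a compensating factor $(1-\sigma)/(1/2-\sigma)$; this is precisely the origin of the $(1-\sigma_0)(1/2 + |\sigma_0| - \sigma_0)$ term in~\eqref{eq:D1}. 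Beyond this careful accounting, no new analytic input is needed and the proof reduces to elementary but patient organisation of the resulting constants.
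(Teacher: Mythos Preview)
Your proposal is correct and follows essentially the same approach as the paper: Abel summation combined with the explicit divisor bound $|\Delta(x)|\leq\sqrt{x}$ (the paper cites Berkane--Bordell\`es--Ramar\'e for $C=1$), followed by elementary integration and careful packaging of the lower-order terms. The only minor difference is that the paper handles the passage from $\sigma$ to $\sigma_0$ in the second estimate by observing in one stroke that $(\sigma-1)^{-2}\mathcal{D}_2(x_0,\sigma)$ is decreasing in $\sigma>1$, and in the first estimate uses the mean-value bound $\frac{(1+2(|\sigma|-\sigma))x^{1/2-\sigma}-2|\sigma|}{1-2\sigma}\leq 1+(1/2+|\sigma|-\sigma)x^{1/2-\sigma}\log x$ rather than tracking the factor $(1-\sigma)/(1/2-\sigma)$ directly; but these are exactly the bookkeeping manoeuvres you anticipate.
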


\begin{proof}
By~\cite[Theorem~1.1]{BBR} we have $\sum_{n\leq x}d(n)=x\log{x}+(2\gamma-1)x+\Delta(x)$ with $\left|\Delta(x)\right|\leq \sqrt{x}$ for $x\geq 1$. Let $1\leq X\leq Y$. By partial summation,
\begin{equation}
\label{eq:PSd} 
\sum_{X<n\leq Y}\frac{d(n)}{n^\sigma} = Y^{-\sigma}\sum_{n\leq Y}d(n) - X^{-\sigma}\sum_{n\leq X}d(n) + \sigma\int_{X}^{Y}\frac{\sum_{n\leq u}d(n)}{u^{1+\sigma}}\dif{u}.
\end{equation}
The statement of Lemma~\ref{lem:d} for $\sigma_0\leq\sigma\leq 1/2$, $\sigma_0\leq0$ now follows by taking $X=1$ and $Y=x$ in~\eqref{eq:PSd}, by straightforward integration, by noticing that
\[
\frac{\sigma\left(1-x^{1-\sigma}\right)}{(1-\sigma)^{2}} \leq 
\left\{
\begin{array}{ll}
    0, & \sigma\in[0,1), \\
    \frac{-\sigma x^{1-\sigma}}{(1-\sigma)^2}, & \sigma<0,
\end{array}
\right.
\]
and by the mean-value estimate
\[
\frac{\left(1+2\left(|\sigma|-\sigma\right)\right)x^{1/2-\sigma}-2|\sigma|}{1-2\sigma} \leq 1 + \left(\frac{1}{2}+|\sigma|-\sigma\right)x^{\frac{1}{2}-\sigma}\log{x}
\]
that is valid for all $\sigma<1/2$. The corresponding result for $\sigma\geq\sigma_0>1$ simply follows by taking $X=x$ and $Y\to\infty$ in~\eqref{eq:PSd}, while observing that $(\sigma-1)^{-2}\mathcal{D}_{2}(x_0,\sigma)$ is a decreasing function in $\sigma>1$.
\end{proof}

\begin{remark}
Although there exist better explicit estimates for $|\Delta(x)|$, e.g., of the form $\Delta(x) \ll x^{\frac{1}{3}} \log x$ as in~\cite{BBR}, the above weaker version serves our purpose.
\end{remark}

\begin{lemma}
\label{lem:dExp}
Let $x\geq x_0\geq 2$ and $\sigma\in[0,1]$. Then
\[
\sum_{n\leq x}\frac{d(n)}{n^\sigma}\left(1-e^{-\frac{n}{x}}\right) \leq \frac{\mathcal{D}_1(x_0,\sigma-1)}{2-\sigma}x^{1-\sigma}\log{x}, \quad \sum_{n>x}\frac{d(n)}{n^\sigma}e^{-\frac{n}{x}} \leq \mathcal{D}_{3}(x_0)x^{1-\sigma}\log{x},
\]
where $\mathcal{D}_1(x_0,\sigma_0)$ is defined by~\eqref{eq:D1} and
\begin{equation}
\label{eq:D3}
\mathcal{D}_{3}(x_0) \de \frac{1}{e} + \frac{1+2\gamma}{e\log{x_0}} + \frac{5}{2e\sqrt{x_0}\log{x_0}},
\end{equation}
where $\gamma$ is the Euler--Mascheroni constant.
\end{lemma}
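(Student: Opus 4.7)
My plan is to handle the two bounds separately, relying on Lemma~\ref{lem:d} for the first and on Abel summation for the second. For the first bound, I would exploit the elementary convexity inequality $1-e^{-t}\leq t$ (valid for all $t\geq 0$), applied at $t=n/x$, to obtain
\[
\sum_{n\leq x}\frac{d(n)}{n^\sigma}\bigl(1-e^{-n/x}\bigr)\leq \frac{1}{x}\sum_{n\leq x}\frac{d(n)}{n^{\sigma-1}}.
\]
Since the hypothesis $\sigma\in[0,1]$ forces $\sigma-1\in[-1,0]$, the first part of Lemma~\ref{lem:d} applies with the choice $\sigma_0=\sigma-1\leq 0$ and bounds the right-hand sum by $\mathcal{D}_1(x_0,\sigma-1)(2-\sigma)^{-1}x^{2-\sigma}\log{x}$. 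Dividing by $x$ produces exactly the required inequality.

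For the second bound I would first invoke the crude estimate $n^{-\sigma}\leq x^{-\sigma}$, valid for $n\geq x$ and $\sigma\geq 0$, reducing the task to showing $\sum_{n>x}d(n)e^{-n/x}\leq \mathcal{D}_3(x_0)x\log{x}$. Let $D(u)\de \sum_{n\leq u}d(n)=u\log{u}+(2\gamma-1)u+\Delta(u)$ with $|\Delta(u)|\leq \sqrt{u}$ (the divisor-summatory bound already used in the proof of Lemma~\ref{lem:d}). Since $D(u)e^{-u/x}\to 0$ as $u\to\infty$, Abel summation yields
\[
\sum_{n>x}d(n)\,e^{-n/x} = -\frac{D(x)}{e} + \frac{1}{x}\int_{x}^{\infty}D(u)\,e^{-u/x}\,\dif{u}.
\]
The substitution $u=xv$ combined with $\int_{1}^{\infty}v\,e^{-v}\,\dif{v}=2/e$ turns the $u\log{u}$ piece of $D$ inside the integral into $2e^{-1}x\log{x}+x\mathcal{I}$, where $\mathcal{I}\de \int_{1}^{\infty}v\log{v}\,e^{-v}\,\dif{v}$; combined with $-e^{-1}D(x)=-e^{-1}x\log{x}+O(x)+O(\sqrt{x})$, this produces the expected leading term $e^{-1}x\log{x}$ together with explicit correction terms of size $O(x)$ and $O(\sqrt{x})$.

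The only remaining task, and I expect it to be the main obstacle, is to package these correction terms into the coefficients $(1+2\gamma)/(e\log{x_0})$ and $5/(2e\sqrt{x_0}\log{x_0})$ appearing in $\mathcal{D}_3(x_0)$. The $O(x)$ contributions consist of $(2\gamma-1)e^{-1}x$ (from $-e^{-1}D(x)$ and the $(2\gamma-1)u$ piece of $D$) together with $x\mathcal{I}$; the elementary inequality $\log{v}\leq v/e$ (valid for all $v>0$, with equality at $v=e$) gives $\mathcal{I}\leq e^{-1}\int_{1}^{\infty}v^{2}e^{-v}\,\dif{v}=5/e^{2}\leq 2/e$, so the total $O(x)$ coefficient is at most $(1+2\gamma)/e$. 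For the $\Delta$-terms, $|-e^{-1}\Delta(x)|\leq e^{-1}\sqrt{x}$ and one integration by parts gives $\int_{1}^{\infty}\sqrt{v}\,e^{-v}\,\dif{v}\leq 3/(2e)$, so the total $\sqrt{x}$ coefficient is at most $5/(2e)$. Finally, I would use $x\geq x_0$ to absorb a factor of $\log{x}/\log{x_0}\geq 1$, rewriting the $O(x)$ contribution as $(1+2\gamma)x\log{x}/(e\log{x_0})$ and the $O(\sqrt{x})$ contribution as $5x\log{x}/(2e\sqrt{x_0}\log{x_0})$, matching $\mathcal{D}_3(x_0)x\log{x}$ exactly. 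The underlying analysis is classical; the subtlety lies entirely in this final bookkeeping.
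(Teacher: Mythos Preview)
Your proposal is correct and follows essentially the same route as the paper: the first bound via $1-e^{-t}\leq t$ and Lemma~\ref{lem:d}, the second via the reduction $n^{-\sigma}\leq x^{-\sigma}$, Abel summation, and the decomposition $D(u)=u\log u+(2\gamma-1)u+\Delta(u)$. The only cosmetic difference is in how the integrals are evaluated---the paper uses a recursive integration-by-parts identity for $\int_x^\infty e^{-u/x}u^\alpha(\log u)^k\,\dif{u}$, while you substitute $u=xv$ and bound $\int_1^\infty v\log v\,e^{-v}\,\dif{v}$ via $\log v\leq v/e$---but both routes land on the same numerical bound $e^{-1}x\log x+(1+2\gamma)e^{-1}x+\tfrac{5}{2e}\sqrt{x}$ before the final packaging into $\mathcal{D}_3(x_0)x\log x$.
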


\begin{proof}
The first inequality simply follows from Lemma~\ref{lem:d} due to the fact that $\left|1-e^{-u}\right|\leq u$ for $u\in[0,1]$ and $\sigma-1\in[-1,0]$. Concerning the second inequality, we have
\[
\sum_{n>x}\frac{d(n)}{n^\sigma}e^{-\frac{n}{x}} \leq \frac{1}{x^\sigma}\sum_{n>x}d(n)e^{-\frac{n}{x}}, \quad \sigma\geq 0,
\]
and
\[
\sum_{n>x}d(n)e^{-\frac{n}{x}} = -\frac{1}{e}\sum_{n\leq x}d(n) + \frac{1}{x}\int_{x}^{\infty}e^{-\frac{u}{x}}\sum_{n\leq u}d(n)\dif{u}
\]
by partial summation. Now,
\begin{flalign*}
\frac{1}{x}\int_{x}^{\infty}e^{-\frac{u}{x}}\sum_{n\leq u}d(n)\dif{u} &= \frac{1}{x}\int_{x}^{\infty}e^{-\frac{u}{x}}\left(u\log{u}+(2\gamma-1)u\right)\dif{u} + \frac{1}{x}\int_{x}^{\infty}e^{-\frac{u}{x}}\Delta(u)\dif{u} \\
&\leq \frac{2}{e}x\log{x} + \frac{4\gamma}{e}x + \frac{3}{2e}\sqrt{x},
\end{flalign*}
where we have used 
\begin{multline*}
\int_{x}^{\infty}e^{-\frac{u}{x}}u^{\alpha}(\log{u})^{k}\dif{u} = \frac{1+\alpha}{e}x^{1+\alpha}(\log{x})^{k} + \alpha(\alpha-1)x^{2}\int_{x}^{\infty}e^{-\frac{u}{x}}u^{\alpha-2}(\log{u})^{k}\dif{u} \\
+ \alpha k x^{2}\int_{x}^{\infty}e^{-\frac{u}{x}}u^{\alpha-2}(\log{u})^{k-1}\dif{u} + kx\int_{x}^{\infty}e^{-\frac{u}{x}}u^{\alpha-1}(\log{u})^{k-1}\dif{u},
\end{multline*}
which is valid for all $\alpha\in\R$ and $k\in\R$. The stated result now easily follows.
\end{proof}

\begin{lemma}
\label{lem:Sumd2}
Let $x\geq 2$. Then
\[
\sum_{n\leq x}d^{2}(n) = D_1x\log^{3}{x} + D_2x\log^{2}{x} + D_3x\log{x} + D_4x + D_0(x),
\]
where $D_1=1/\pi^2$, $0.744341<D_2<0.744342$, $0.823265<D_3<0.823266$, $0.460323<D_4<0.460324$ and $|D_0(x)|\leq 9.73x^{3/4}\log{x}$.
\end{lemma}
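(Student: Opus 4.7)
The plan is to start from the Dirichlet series identity
\[
\sum_{n=1}^{\infty}\frac{d^{2}(n)}{n^{s}} = \frac{\zeta^{4}(s)}{\zeta(2s)},
\]
which by Möbius inversion gives the arithmetic identity $d^{2}(n) = \sum_{k^{2}\mid n}\mu(k)\,\tau_{4}(n/k^{2})$, where $\tau_{4}$ is the four-fold divisor function. Summing over $n\leq x$ yields
\[
\sum_{n\leq x}d^{2}(n) = \sum_{k\leq\sqrt{x}}\mu(k)\,T_{4}(x/k^{2}), \qquad T_{4}(y)\de\sum_{m\leq y}\tau_{4}(m),
\]
so the task reduces to (a) establishing an explicit asymptotic for $T_{4}(y)$ with an $O(y^{3/4}\log y)$ error, and (b) completing the $k$-sum and reading off the constants.

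For step (a), I apply the Dirichlet hyperbola method to $\tau_{4}=d\ast d$:
\[
T_{4}(y) = 2\sum_{a\leq\sqrt{y}} d(a)\,D(y/a) - D(\sqrt{y})^{2}, \quad D(y)\de\sum_{n\leq y}d(n)=y\log y+(2\gamma-1)y+\Delta(y),
\]
with $|\Delta(y)|\leq\sqrt{y}$ (the same explicit bound already invoked in Lemma~\ref{lem:d}). Substituting, expanding $\log(y/a)=\log y-\log a$, and evaluating the sums $\sum_{a\leq\sqrt{y}}d(a)a^{-1}(\log a)^{j}$ for $j\in\{0,1\}$ via partial summation from Lemma~\ref{lem:d}, produces the explicit formula $T_{4}(y)=yQ(\log y)+R_{4}(y)$ where $Q(L)=\tfrac{1}{6}L^{3}+q_{2}L^{2}+q_{1}L+q_{0}$ has coefficients that are polynomials in $\gamma$. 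The $\Delta(y/a)$-contributions are controlled by $\sqrt{y}\sum_{a\leq\sqrt{y}}d(a)/\sqrt{a}\ll y^{3/4}\log y$, again via Lemma~\ref{lem:d} at $\sigma=1/2$, yielding $|R_{4}(y)|\leq C_{\ast}y^{3/4}\log y$ with an explicit $C_{\ast}$.

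For step (b), substitute into the Möbius sum, write $Q(\log(x/k^{2}))=Q(\log x-2\log k)$ and expand as a polynomial in $\log x$ whose coefficients are linear combinations of $(\log k)^{j}$ for $j\leq 3$. Completing the $k$-sums via
\[
\sum_{k=1}^{\infty}\frac{\mu(k)}{k^{2}} = \frac{6}{\pi^{2}}, \qquad \sum_{k=1}^{\infty}\frac{\mu(k)(\log k)^{j}}{k^{2}} = \Psi_{j}\quad (j=1,2,3),
\]
where $\Psi_{j}=(-1/2)^{j}\frac{d^{j}}{ds^{j}}\left.\frac{1}{\zeta(2s)}\right|_{s=1}$ is explicitly computable from $\zeta(2),\zeta'(2),\zeta''(2),\zeta'''(2)$, gives the leading coefficient $D_{1}=(1/6)(6/\pi^{2})=1/\pi^{2}$ and closed-form expressions for $D_{2},D_{3},D_{4}$. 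Two error contributions remain: the tail $\sum_{k>\sqrt{x}}\mu(k)k^{-2}(\log k)^{j}$ contributes $\ll x^{-1/2}\log^{3}x$, which after multiplication by $x$ is $\ll\sqrt{x}\log^{3}x\leq x^{3/4}\log x$; and the $R_{4}(x/k^{2})$ errors sum to $\leq C_{\ast}x^{3/4}\log x\cdot\zeta(3/2)$. Both are absorbed into $D_{0}(x)$.

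The main obstacle is the careful explicit numerical bookkeeping required to achieve the sharp stated constants: each hyperbola step contributes several pieces whose constants must be tracked via the explicit $\mathcal{D}_{1},\mathcal{D}_{2}$ of Lemma~\ref{lem:d} and analogous explicit bounds for the logarithmic moments of $d(n)$, while the closed-form expressions for $D_{2},D_{3},D_{4}$ require high-precision numerical values of $\gamma$ and $\zeta^{(j)}(2)$ to separate e.g.\ $0.744341<D_{2}<0.744342$. The strategy is elementary but computationally delicate, and the constant $9.73$ in the error bound should be comfortably attainable since the dominant piece $C_{\ast}\zeta(3/2)$ is of order $8$ with the natural choice of $C_{\ast}$.
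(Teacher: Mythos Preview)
The paper does not prove this lemma at all: its entire proof is the single line ``This is~\cite[Theorem~2]{cully2021two}.'' So there is nothing to compare at the level of argument; the paper simply imports the result from Cully-Hugill and Trudgian.

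Your outline is a correct and standard route to such a result, and is in fact essentially the method of the cited paper: write $d^{2}=\mu\!\mid_{\mathrm{sq}}\ast\tau_{4}$ via $\zeta^{4}(s)/\zeta(2s)$, feed in an explicit hyperbola estimate for $T_{4}$ built from $\tau_{4}=d\ast d$ and the explicit bound $|\Delta(y)|\le\sqrt{y}$, and then complete the M\"obius sum using numerical values of $\zeta^{(j)}(2)$. The identification $D_{1}=1/\pi^{2}$ and the closed forms for $D_{2},D_{3},D_{4}$ fall out exactly as you describe. Two small points worth flagging if you were to carry this out in full: (i) your appeals to Lemma~\ref{lem:d} for the sums $\sum_{a\le\sqrt{y}}d(a)a^{-1}(\log a)^{j}$ are slightly off, since that lemma does not treat logarithmic weights---you would instead do partial summation directly from $D(y)=y\log y+(2\gamma-1)y+\Delta(y)$; and (ii) the bound $\sqrt{x}\log^{3}x\le x^{3/4}\log x$ needed to absorb the tail errors requires an explicit constant valid for all $x\ge 2$, which must be folded into the final $9.73$. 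None of this is a genuine obstruction; the sketch is sound, and the only real work is the bookkeeping you have already identified.
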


\begin{proof}
This is~\cite[Theorem~2]{cully2021two}.
\end{proof}

\begin{lemma}
\label{lem:d2}
Let $x\geq x_0\geq 2$. Then
\[
\sum_{n\leq x}\frac{d^{2}{(n)}}{n^{\sigma}} \leq 
\mathcal{D}_{4}(x_0,\sigma)x^{1-\sigma}\log^{3}{x}, \quad \sigma\in[-3,1),
\]
and
\[
\sum_{n>x}\frac{d^{2}{(n)}}{n^{\sigma}} \leq \frac{\mathcal{D}_{5}(x_0,\sigma)}{(\sigma-1)^{4}}x^{1-\sigma}\log^{3}{x}, \quad \sigma>1,
\]
where
\begin{equation}
\label{eq:D4}
\mathcal{D}_{4}(x_0,\sigma) \de 
\left\{
\begin{array}{ll}
   \mathcal{D}_{41}(x_0,\sigma), & \sigma\in[-3,0), \\
   \frac{1}{1-\sigma}\mathcal{D}_{42}(x_0,\sigma), & \sigma\in[0,1)
\end{array}
\right.
\end{equation}
with
\begin{flalign*}
\mathcal{D}_{41}(x_0,\sigma) &\de \frac{1}{(1-\sigma)\pi^2}+\left(-\frac{3\sigma }{(1-\sigma)^2\pi^2}+\frac{0.744342}{1-\sigma}\right)\frac{1}{\log{x_0}} \nonumber \\
&+\left(\frac{6\sigma}{(1-\sigma)^3\pi^2}-\frac{1.488684\sigma}{(1-\sigma)^2}+\frac{0.823266}{1-\sigma}\right)\frac{1}{\log^{2}{x_0}} \nonumber \\
&+\left(-\frac{6\sigma}{(1-\sigma)^{4}\pi^2}+\frac{1.488684\sigma}{(1-\sigma)^{3}}-\frac{0.823266\sigma}{(1-\sigma)^2}+\frac{0.460324}{1-\sigma}\right)\frac{1}{\log^{3}{x_0}} + \frac{17.52}{x_0^{1/4}\log^{2}{x_0}},
\end{flalign*}
and
\begin{equation*}
\mathcal{D}_{42}(x_0,\sigma)\de \frac{1}{\pi^2} + \frac{0.744342}{\log{x_0}} + \frac{0.823266}{\log^{2}{x_0}} + \frac{0.460324}{\log^{3}{x_0}} + \frac{9.73\sigma(1-\sigma)}{x_0^{\min\{1/4,1-\sigma\}}\log{x_0}} + \frac{9.73(1-\sigma)}{x_0^{1/4}\log^{2}{x_0}},
\end{equation*}
while
\begin{flalign}
\label{eq:D5}
\mathcal{D}_5(x_0,\sigma) &\de \frac{(\sigma-1)^3}{\pi^2} + \left(\sigma-1\right)^{2}\left(\frac{3\sigma}{\pi^2}+0.744342(\sigma-1)\right)\frac{1}{\log{x_0}} \nonumber \\ 
&+ \left(\sigma-1\right)\left(\frac{6\sigma}{\pi^2}+1.488684\sigma(\sigma-1)+0.823266(\sigma-1)^2\right)\frac{1}{\log^{2}{x_0}} \nonumber \\
&+ \left(\frac{6\sigma}{\pi^2}+1.488684\sigma(\sigma-1)+0.823266\sigma(\sigma-1)^{2}+0.460324(\sigma-1)^{3}\right)\frac{1}{\log^{3}{x_0}} \nonumber \\
&+ \left(1+\frac{\sigma}{\sigma-3/4}\left(1+\frac{1}{\left(\sigma-3/4\right)\log{x_0}}\right)\right)\frac{9.73(\sigma-1)^4}{x_0^{1/4}\log^{2}{x_0}}.
\end{flalign}
\end{lemma}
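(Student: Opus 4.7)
The plan is to apply partial summation to Lemma~\ref{lem:Sumd2}. I would write $S(u)\de\sum_{n\leq u}d^{2}(n)=P(u)+D_{0}(u)$ with main-term polynomial $P(u)\de u(D_{1}\log^{3}{u}+D_{2}\log^{2}{u}+D_{3}\log{u}+D_{4})$ and $|D_{0}(u)|\leq 9.73u^{3/4}\log{u}$; partial summation then yields
\begin{equation*}
\sum_{n\leq x}\frac{d^{2}(n)}{n^{\sigma}}=\frac{S(x)}{x^{\sigma}}+\sigma\int_{1}^{x}\frac{S(u)}{u^{\sigma+1}}\dif{u},\qquad \sum_{n>x}\frac{d^{2}(n)}{n^{\sigma}}=-\frac{S(x)}{x^{\sigma}}+\sigma\int_{x}^{\infty}\frac{S(u)}{u^{\sigma+1}}\dif{u},
\end{equation*}
the second identity being valid for $\sigma>1$ since then $S(u)u^{-\sigma}\to 0$ as $u\to\infty$. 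Substituting $S=P+D_{0}$ splits each estimate into a main-term contribution and a remainder contribution from $D_{0}$.

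For the main-term part, I would evaluate the integrals
\begin{equation*}
I_{k}(x,\sigma)\de\int_{1}^{x}\frac{(\log u)^{k}}{u^{\sigma}}\dif{u},\qquad J_{k}(x,\sigma)\de\int_{x}^{\infty}\frac{(\log u)^{k}}{u^{\sigma}}\dif{u}
\end{equation*}
by iterated integration by parts, obtaining
\begin{equation*}
I_{k}(x,\sigma)=\sum_{j=0}^{k}\frac{(-1)^{j}k!/(k-j)!}{(1-\sigma)^{j+1}}x^{1-\sigma}(\log x)^{k-j}+C_{k}(\sigma),\quad J_{k}(x,\sigma)=\sum_{j=0}^{k}\frac{k!/(k-j)!}{(\sigma-1)^{j+1}}x^{1-\sigma}(\log x)^{k-j},
\end{equation*}
with $C_{k}(\sigma)$ a bounded boundary constant at $u=1$. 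Combining $x^{-\sigma}P(x)$ with $\sigma(D_{1}I_{3}+D_{2}I_{2}+D_{3}I_{1}+D_{4}I_{0})$ in the finite case (and the analogous expression using $J_{k}$ in the tail case), the leading-order pieces telescope to $D_{1}x^{1-\sigma}\log^{3}{x}/(1-\sigma)$ and $D_{1}x^{1-\sigma}\log^{3}{x}/(\sigma-1)$, matching the first term of $\mathcal{D}_{41}$ and the top contribution of $\mathcal{D}_{5}$ respectively. The subleading combinations, for example $D_{2}/(1-\sigma)-3D_{1}\sigma/(1-\sigma)^{2}$ at the $\log^{2}{x}$ level and $D_{3}/(1-\sigma)-2D_{2}\sigma/(1-\sigma)^{2}+6D_{1}\sigma/(1-\sigma)^{3}$ at the $\log{x}$ level, produce the remaining polynomial coefficients of $\mathcal{D}_{41}$ and of the first three lines of $\mathcal{D}_{5}$ after uniform scaling via $(\log x)^{3-j}\leq(\log x)^{3}/(\log x_{0})^{j}$ valid for $x\geq x_{0}\geq 2$; the deepest IBP step ($j=3$) is what forces the prefactors $(1-\sigma)^{-4}$ and $(\sigma-1)^{-4}$, the latter being extracted globally in the tail estimate for notational convenience. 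For $\sigma\in[0,1)$ several of these subleading corrections become non-positive and can be dropped, which (after absorbing a factor $(1-\sigma)$) yields the cleaner $\mathcal{D}_{42}$ form.

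For the $D_{0}$ remainder I would use the trivial bound $|D_{0}(u)|\leq 9.73u^{3/4}\log{u}$. In the first estimate, the routine computation $\int_{1}^{x}u^{-\sigma-1/4}\log{u}\,\dif{u}\leq x^{3/4-\sigma}\log{x}/(3/4-\sigma)$ gives
\begin{equation*}
\frac{|D_{0}(x)|}{x^{\sigma}}+|\sigma|\int_{1}^{x}\frac{|D_{0}(u)|}{u^{\sigma+1}}\dif{u}\leq 9.73\,x^{3/4-\sigma}\log{x}\cdot\frac{3/4-\sigma+|\sigma|}{3/4-\sigma}.
\end{equation*}
For $\sigma\in[-3,0)$ the right-hand factor attains its maximum $9/5$ at $\sigma=-3$, hence the bound $9.73\cdot(9/5)=17.514\leq 17.52$; combined with $x^{3/4-\sigma}\log{x}\leq x^{1-\sigma}\log^{3}{x}/(x_{0}^{1/4}\log^{2}{x_{0}})$ from $x\geq x_{0}$, this gives exactly the final summand of $\mathcal{D}_{41}$. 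For $\sigma\in[0,1)$, the split at $\sigma=3/4$ (where the character of the error integral changes) together with the normalization choice yields the two error summands of $\mathcal{D}_{42}$ with the $x_{0}^{\min\{1/4,1-\sigma\}}$ factor. The analogous computation with $\int_{x}^{\infty}u^{-\sigma-1/4}\log{u}\,\dif{u}$ in the tail case $(\sigma>1)$ produces the final summand of $\mathcal{D}_{5}$ after dividing by the $(\sigma-1)^{-4}$ normalization.

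The proof is essentially a careful but mechanical tracking of the expansions above; no single step is technically difficult. The main obstacle will be matching the exact rational coefficients of $\mathcal{D}_{41}$, $\mathcal{D}_{42}$ and $\mathcal{D}_{5}$: one has to collect every IBP correction at every order and verify that the uniform bounds in $x\geq x_{0}$ land on the correct powers of $\log{x_{0}}$. The only mildly delicate point is the case split at $\sigma=3/4$ within $[0,1)$, which is needed to control the $D_{0}$-error as $\sigma$ approaches $1$, together with the associated decision to factor $(1-\sigma)$ out of $\mathcal{D}_{4}$ via the $\mathcal{D}_{42}$ form.
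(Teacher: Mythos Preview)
Your proposal is correct and follows essentially the same approach as the paper: partial summation against Lemma~\ref{lem:Sumd2}, iterated integration by parts for the main term, and the trivial bound on $D_0$ for the error, with the same case split and the same $17.52$ constant arising in the range $\sigma\in[-3,0)$. One small refinement: for $\sigma\in[0,1)$ the paper does not drop IBP corrections termwise (some have the wrong sign, e.g.\ $6D_1\sigma/(1-\sigma)^3$) but instead bounds $(\log u)^{k}\le(\log x)^{k}$ inside the integral before integrating, which directly yields the $\mathcal{D}_{42}$ form; also note that the bound $|D_0(u)|\le 9.73u^{3/4}\log u$ is only asserted for $u\ge 2$, so the paper starts its partial summation at $X=2$ rather than $1$.
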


\begin{proof}
Let $2\leq X\leq Y$, $\sigma\in\R\setminus\{1\}$ and $k\geq 0$. Note that
\[
\int_{X}^{Y}u^{-\sigma}\left(\log{u}\right)^{k}\dif{u} = \frac{\log^{k}{Y}}{(1-\sigma)Y^{\sigma-1}} - \frac{\log^{k}{X}}{(1-\sigma)X^{\sigma-1}} - \frac{k}{1-\sigma}\int_{X}^{Y}u^{-\sigma}\left(\log{u}\right)^{k-1}\dif{u}.
\]
Repeated use of the above recursive relation, in combination with partial summation and Lemma~\ref{lem:Sumd2}, gives
\begin{flalign}
\label{eq:d2Aux1}
\sum_{X<n\leq Y}\frac{d^{2}{(n)}}{n^\sigma} &= Y^{-\sigma}\sum_{n\leq Y}d^{2}{(n)} - X^{-\sigma}\sum_{n\leq X}d^{2}{(n)} + \sigma\int_{X}^{Y}\frac{\sum_{n\leq u}d^{2}{(n)}}{u^{1+\sigma}}\dif{u} \nonumber \\
&= \sum_{k=1}^{4}D_kY^{1-\sigma}\left(\log{Y}\right)^{4-k} + \sigma\mathcal{Z}(Y,\sigma) - \sum_{k=1}^{4}D_kX^{1-\sigma}\left(\log{X}\right)^{4-k} - \sigma\mathcal{Z}(X,\sigma) \nonumber \\
&+ Y^{-\sigma}D_0(Y) - X^{-\sigma}D_0(X) + \sigma\int_{X}^{Y}\frac{D_0(u)}{u^{1+\sigma}}\dif{u},
\end{flalign}
where
\begin{flalign}
\label{eq:d2Aux2}
\mathcal{Z}(Z,\sigma) &\de \frac{D_1\log^{3}{Z}}{(1-\sigma)Z^{\sigma-1}} + \left(-\frac{3D_1}{(1-\sigma)^2}+\frac{D_2}{1-\sigma}\right)\frac{\log^{2}{Z}}{Z^{\sigma-1}} + \left(\frac{6D_1}{(1-\sigma)^3}-\frac{2D_2}{(1-\sigma)^2}+\frac{D_3}{1-\sigma}\right)\frac{\log{Z}}{Z^{\sigma-1}} \nonumber \\
&+ \left(-\frac{6D_1}{(1-\sigma)^4}+\frac{2D_2}{(1-\sigma)^3}-\frac{D_3}{(1-\sigma)^2}+\frac{D_4}{1-\sigma}\right)\frac{1}{Z^{\sigma-1}}.
\end{flalign}
Also, 
\begin{equation}
\label{eq:d2Aux3}
\mathcal{Z}(Y,\sigma)-\mathcal{Z}(X,\sigma) \leq \left(D_1\log^{3}{Y}+D_2\log^{2}{Y}+D_3\log{Y}+D_4\right)\frac{Y^{1-\sigma}-X^{1-\sigma}}{1-\sigma}.
\end{equation}
Let $\sigma\in[-3,0]$, $X=2$ and $Y=x$. Then~\eqref{eq:d2Aux1} and~\eqref{eq:d2Aux2} imply
\begin{flalign}
\label{eq:d2Aux}
\sum_{n\leq x}\frac{d^{2}{(n)}}{n^\sigma} &\leq \left(\frac{D_1}{1-\sigma}+\left(-\frac{3\sigma D_1}{(1-\sigma)^2}+\frac{D_2}{1-\sigma}\right)\frac{1}{\log{x}}+\left(\frac{6\sigma D_1}{(1-\sigma)^3}-\frac{2\sigma D_2}{(1-\sigma)^2}+\frac{D_3}{1-\sigma}\right)\frac{1}{\log^{2}{x}}\right. \nonumber \\
&\left.+\left(-\frac{6\sigma D_1}{(1-\sigma)^{4}}+\frac{2\sigma D_2}{(1-\sigma)^{3}}-\frac{\sigma D_3}{(1-\sigma)^2}+\frac{D_4}{1-\sigma}\right)\frac{1}{\log^{3}{x}}\right)x^{1-\sigma}\log^{3}{x} \nonumber \\
&+ 1 - 2^{-\sigma} - \sigma\mathcal{Z}(2,\sigma) + 9.73x^{\frac{3}{4}-\sigma}\log{x} + 9.73|\sigma|\int_{2}^{x}\frac{\log{u}}{u^{1/4+\sigma}}\dif{u}.
\end{flalign}
Note that 
\begin{equation}
\label{eq:integral}
\int_{2}^{x}\frac{\log{u}}{u^{1/4+\sigma}}\dif{u} \leq \frac{x^{3/4-\sigma}-2^{3/4-\sigma}}{3/4-\sigma}\log{x} \leq x^{\max\{3/4-\sigma,0\}}\log^{2}{x} - 2^{-1/4}\log{2}\log{x}
\end{equation}
for $\sigma\in(-\infty,1]$ by the mean-value theorem. The values of the expressions in the inner brackets of~\eqref{eq:d2Aux} are positive, while the last line in~\eqref{eq:d2Aux} is $\leq 17.52x^{3/4-\sigma}\log{x}$ by the first inequality in~\eqref{eq:integral}. The corresponding result of Lemma~\ref{lem:d2} now easily follows. Next, consider $\sigma\in[0,1)$, $X=2$ and $Y=x$. It follows from~\eqref{eq:d2Aux1},~\eqref{eq:d2Aux3} and~\eqref{eq:integral} that
\begin{multline*}
\sum_{n\leq x}\frac{d^{2}(n)}{n^\sigma} \leq \frac{x^{1-\sigma}}{1-\sigma}\sum_{k=1}^{4}D_{k}\left(\log{x}\right)^{4-k} + 1 - 2^{-\sigma} - \frac{\sigma}{1-\sigma}\sum_{k=1}^{4}D_k\left(\log{2}\right)^{4-k} \\
+ 9.73x^{\frac{3}{4}-\sigma}\log{x} + 9.73\sigma x^{\max\{3/4-\sigma,0\}}\log^{2}{x} - 9.73\sigma2^{-1/4}\log^{2}{2}.
\end{multline*}
The sum of those terms on the right-hand side of the above inequality that do not depend on $x$ is not positive. The corresponding result of Lemma~\ref{lem:d2} now easily follows. Finally, let $\sigma>1$, $X=x$ and $Y\to\infty$. The result follows from~\eqref{eq:d2Aux1} and~\eqref{eq:d2Aux2}, together with
\[
\int_{x}^{\infty}\frac{\log{u}}{u^{1/4+\sigma}}\dif{u} = \frac{1}{(\sigma-3/4)x^{\sigma-3/4}}\left(\log{x}+\frac{1}{\sigma-3/4}\right).
\]
The proof is thus complete.
\end{proof}

\begin{lemma}
\label{lem:d2Exp1}
Let $x\geq x_0\geq 2$. Then
\begin{equation*}
\left|\sum_{n\leq x}\frac{d^{2}(n)}{n}-\frac{1}{4\pi^2}\log^{4}{x}\right| \leq\mathcal{D}_{6}(x_0)\log^{3}{x},
\end{equation*}
where
\begin{equation}
\label{eq:D6}
\mathcal{D}_{6}(x_0) \de 0.349436 + \frac{1.155975}{\log{x_0}} + \frac{1.28359}{\log^{2}{x_0}} + \frac{154}{\log^{3}{x_0}} + \frac{9.73}{x_0^{1/4}\log^{2}{x_0}}.
\end{equation}
\end{lemma}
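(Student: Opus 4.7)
The plan is to evaluate $\sum_{n\leq x}d^{2}(n)/n$ by Abel's summation formula, substitute the asymptotic expansion from Lemma~\ref{lem:Sumd2}, and then collect the resulting terms. Writing $A(u)\de\sum_{n\leq u}d^{2}(n)$ and recalling that $A(u)=1$ for $1\leq u<2$, Abel's summation gives
\[
\sum_{n\leq x}\frac{d^{2}(n)}{n}=\frac{A(x)}{x}+\int_{1}^{x}\frac{A(u)}{u^{2}}\dif{u}=\frac{A(x)}{x}+\frac{1}{2}+\int_{2}^{x}\frac{A(u)}{u^{2}}\dif{u}.
\]

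For the integral over $[2,x]$ we substitute the identity $A(u)=D_{1}u\log^{3}{u}+D_{2}u\log^{2}{u}+D_{3}u\log{u}+D_{4}u+D_{0}(u)$ from Lemma~\ref{lem:Sumd2}, and use $\int_{2}^{x}(\log{u})^{k}/u\,\dif{u}=((\log{x})^{k+1}-(\log{2})^{k+1})/(k+1)$ together with $|D_{0}(u)|\leq 9.73\,u^{3/4}\log{u}$. Since $D_{1}=1/\pi^{2}$, the $\log^{3}{u}/u$ term produces precisely $\log^{4}{x}/(4\pi^{2})$, which is the main term. Adding the boundary contribution $A(x)/x=D_{1}\log^{3}{x}+D_{2}\log^{2}{x}+D_{3}\log{x}+D_{4}+D_{0}(x)/x$ and combining yields the expansion
\[
\sum_{n\leq x}\frac{d^{2}(n)}{n}=\frac{1}{4\pi^{2}}\log^{4}{x}+\left(D_{1}+\frac{D_{2}}{3}\right)\log^{3}{x}+\left(D_{2}+\frac{D_{3}}{2}\right)\log^{2}{x}+(D_{3}+D_{4})\log{x}+\mathcal{E}(x),
\]
where $\mathcal{E}(x)$ collects the constants $D_{4}+\frac{1}{2}-\frac{D_{1}}{4}\log^{4}{2}-\frac{D_{2}}{3}\log^{3}{2}-\frac{D_{3}}{2}\log^{2}{2}-D_{4}\log{2}$ together with $D_{0}(x)/x$ and $\int_{2}^{x}D_{0}(u)/u^{2}\dif{u}$.

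To finish the proof, one bounds each piece by a multiple of $\log^{3}{x}$. Using $\log{x}\geq\log{x_{0}}$, the $\log^{3}$-coefficient $D_{1}+D_{2}/3$ produces the leading $0.349436$ in $\mathcal{D}_{6}(x_{0})$, while dividing the $\log^{2}{x}$ and $\log{x}$ terms by $\log{x}$ and $\log^{2}{x}$ produces the numerators $D_{2}+D_{3}/2=1.155975$ and $D_{3}+D_{4}=1.28359$. The bound $|D_{0}(x)/x|\leq 9.73\log{x}/x^{1/4}$ gives the $9.73/(x_{0}^{1/4}\log^{2}{x_{0}})$ term. For $\int_{2}^{x}D_{0}(u)/u^{2}\dif{u}$, integration by parts of $\int_{2}^{x}(\log{u})/u^{5/4}\dif{u}=\bigl[-4(\log{u})/u^{1/4}-16/u^{1/4}\bigr]_{2}^{x}$ yields a bounded quantity, and together with the absolute constants from $\mathcal{E}(x)$ one checks numerically that the total constant does not exceed $154$, which gives the $154/\log^{3}{x_{0}}$ contribution.

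The only step that is not entirely routine is checking that the absolute constant in $\mathcal{E}(x)$ really fits under $154$; this is a small numerical verification that uses $4\log{2}/2^{1/4}+16/2^{1/4}\approx 18.77/1.189$ multiplied by $9.73$ as the dominant contribution, plus the minor leftover from the polynomial evaluations in $\log{2}$. Everything else is straightforward integration and Abel summation.
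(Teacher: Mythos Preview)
Your proof is correct and follows essentially the same approach as the paper: partial summation (the paper's equation~\eqref{eq:d2Aux1} with $\sigma=1$, $X=2$, $Y=x$ is exactly your Abel identity after accounting for the terms $n\leq 2$), substitution of the asymptotic from Lemma~\ref{lem:Sumd2}, and straightforward integration. You supply the explicit bookkeeping that the paper omits---in particular, your identification of $D_{1}+D_{2}/3$, $D_{2}+D_{3}/2$, $D_{3}+D_{4}$ with the displayed decimals and your numerical check that $9.73\cdot(4\log 2+16)/2^{1/4}\approx 153.6$ together with the residual constant stays below $154$ are exactly the computations the paper leaves as ``straightforward''.
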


\begin{proof}
The result follows by straightforward integration, using the first equality in~\eqref{eq:d2Aux1} for $\sigma=1$, $X=2$ and $Y=x$ in combination with Lemma~\ref{lem:Sumd2}.
\end{proof}

\begin{lemma}
\label{lem:d2Exp2}
Let $x\geq x_0\geq e^{4}$ and $\sigma\geq -1$. Then
\begin{equation*}
\sum_{n>x}\frac{d^{2}(n)}{n^\sigma}e^{-\frac{2n}{x}} \leq \mathcal{D}_{7}(x_0)x^{1-\sigma}\log^{3}{x},
\end{equation*}
where
\begin{equation}
\label{eq:D7}
\mathcal{D}_{7}(x_0)\de \frac{3}{(2e\pi)^2} + \frac{0.12441}{\log{x_0}} + \frac{0.364}{\log^{2}{x_0}} + \frac{0.3}{\log^{3}{x_0}} + \frac{4.61}{x_0^{1/4}\log^{2}{x_0}}.
\end{equation}
\end{lemma}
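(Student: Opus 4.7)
The plan is to reduce the full lemma to the case $\sigma=-1$ and then imitate the proofs of Lemmas~\ref{lem:dExp} and~\ref{lem:d2Exp1}. For $\sigma\geq -1$ and $n\geq x$ we have $-\sigma-1\leq 0$, hence $n^{-\sigma-1}\leq x^{-\sigma-1}$, which gives $n^{-\sigma}\leq n\cdot x^{-\sigma-1}$ and consequently
\[
\sum_{n>x}\frac{d^{2}(n)}{n^{\sigma}}e^{-\frac{2n}{x}} \leq x^{-\sigma-1}\sum_{n>x}d^{2}(n)\,n\,e^{-\frac{2n}{x}}.
\]
Thus it suffices to show $\sum_{n>x}d^{2}(n)\,n\,e^{-2n/x}\leq \mathcal{D}_{7}(x_{0})\,x^{2}\log^{3}{x}$, i.e., the case $\sigma=-1$.

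I would then apply partial summation with $A(u)\de\sum_{n\leq u}d^{2}(n)$ and $h(u)\de u e^{-2u/x}$. Since $h'(u)=(1-2u/x)e^{-2u/x}$ and $A(u)h(u)\to 0$ as $u\to\infty$, one obtains
\[
\sum_{n>x}d^{2}(n)\,n\,e^{-\frac{2n}{x}} = -x e^{-2}A(x) + \int_{x}^{\infty}A(u)\left(\frac{2u}{x}-1\right)e^{-\frac{2u}{x}}\dif{u}.
\]
Substituting the expansion of $A(u)$ from Lemma~\ref{lem:Sumd2} and changing variables $v=2u/x$, the $u$-integrals reduce to expressions of the form $\int_{2}^{\infty}v^{k}(v-1)\log^{j}(xv/2)e^{-v}\dif{v}$ for $k,j\in\{0,1,2,3\}$, which after expanding $\log^{j}(xv/2)$ by the binomial formula become explicit polynomials in $\log{x}$ with incomplete gamma values as coefficients. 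The leading $D_{1}u\log^{3}{u}$ piece of $A(u)$ contributes to the coefficient of $x^{2}\log^{3}{x}$ exactly
\[
D_{1}\left(\frac{1}{4}\int_{2}^{\infty}v(v-1)e^{-v}\dif{v}-e^{-2}\right) = D_{1}\left(\frac{7 e^{-2}}{4}-e^{-2}\right) = \frac{3 D_{1}}{4 e^{2}} = \frac{3}{(2 e\pi)^{2}},
\]
which is exactly the leading constant in $\mathcal{D}_{7}(x_{0})$; the boundary term $-x e^{-2}A(x)$ is therefore essential for producing the correct coefficient, as its cancellation against the $\frac{2u}{x}$-integral changes $\frac{7}{4}$ into $\frac{3}{4}$.

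The remaining contributions are lower order. The subleading terms $D_{2}u\log^{2}{u}$, $D_{3}u\log{u}$, $D_{4}u$ from Lemma~\ref{lem:Sumd2}, the cross terms produced by $\log^{j}(xv/2)=\sum_{i=0}^{j}\binom{j}{i}\log^{j-i}{x}\cdot\log^{i}(v/2)$, and the error bound $|D_{0}(u)|\leq 9.73\,u^{3/4}\log{u}$ (handled via the same substitution applied to $\int_{x}^{\infty}u^{3/4}\log{u}\cdot(2u/x-1)e^{-2u/x}\dif{u}$) each produce contributions of the form $x^{2}\log^{k}{x}$ with $k\in\{0,1,2\}$, or $x^{7/4}\log^{2}{x}$ from the $D_{0}$-piece. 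After dividing by $x^{2}\log^{3}{x}$ and using $\log{x}\geq \log{x_{0}}\geq 4$, these combine into precisely the $1/\log^{k}{x_{0}}$ and $x_{0}^{-1/4}/\log^{2}{x_{0}}$ corrections in $\mathcal{D}_{7}(x_{0})$. The main obstacle is the numerical bookkeeping: rigorously evaluating or tightly bounding the weighted incomplete gamma integrals $\int_{2}^{\infty}v^{k}\log^{i}(v/2)e^{-v}\dif{v}$ together with the $v^{k+3/4}$ analogues coming from $D_{0}$, and collecting all cross contributions so that the resulting constants agree with those displayed in~\eqref{eq:D7}.
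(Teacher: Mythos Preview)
Your proposal is correct and follows essentially the same path as the paper: reduce to $\sigma=-1$ via $n^{-\sigma}\leq n\,x^{-\sigma-1}$, apply partial summation to produce the boundary term $-e^{-2}xA(x)$ plus the integral $\int_{x}^{\infty}A(u)(2u/x-1)e^{-2u/x}\dif{u}$, insert the expansion from Lemma~\ref{lem:Sumd2}, and extract the leading coefficient $3D_1/(4e^{2})=3/(2e\pi)^{2}$. The only difference is cosmetic: you evaluate the integrals by the substitution $v=2u/x$ and a binomial expansion of $\log^{j}(xv/2)$, whereas the paper derives a closed recursive identity for $\int_{x}^{\infty}e^{-2u/x}u^{\alpha}(\log u)^{k}\dif{u}$ via three rounds of integration by parts and then reads off the coefficients directly; both routes lead to the same numerical constants, and the paper's repeated integration by parts spares the explicit incomplete-gamma bookkeeping you flag as the main obstacle.
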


\begin{proof}
We are going to show that 
\begin{equation}
\label{eq:d2Exp2Aux}
\sum_{n>x}nd^{2}(n)e^{-\frac{2n}{x}} \leq \mathcal{D}_{7}(x_0)x^{2}\log^{3}{x}.
\end{equation}
Then the result will follow since
\[
\sum_{n>x}\frac{d^{2}(n)}{n^\sigma}e^{-\frac{2n}{x}} \leq \frac{1}{x^{1+\sigma}}\sum_{n>x}nd^{2}(n)e^{-\frac{2n}{x}}
\]
for $\sigma\geq -1$. We have
\begin{flalign}
\label{eq:d2Exp22}
\sum_{n>x}nd^{2}(n)e^{-\frac{2n}{x}} &= -e^{-2}x\sum_{n\leq x}d^{2}(n) + \int_{x}^{\infty}\left(\frac{2u}{x}-1\right)e^{-\frac{2u}{x}}\sum_{n\leq u}d^{2}(n)\dif{u} \nonumber \\
&= -e^{-2}x^2\sum_{k=1}^{4}D_k(\log{x})^{4-k} + \sum_{k=1}^{4}D_k\int_{x}^{\infty}\left(\frac{2}{x}u^2-u\right)e^{-\frac{2u}{x}}(\log{u})^{4-k}\dif{u} \nonumber \\ 
&- e^{-2}xD_{0}(x) + \int_{x}^{\infty}\left(\frac{2u}{x}-1\right)e^{-\frac{2u}{x}}D_{0}(u)\dif{u}
\end{flalign}
by partial summation and Lemma~\ref{lem:Sumd2}. Repeated use of integration by parts gives
\begin{flalign*}
\int_{x}^{\infty}e^{-\frac{2u}{x}}&u^{\alpha}(\log{u})^{k}\dif{u} = \frac{\alpha^2+\alpha+4}{8e^2}x^{1+\alpha}(\log{x})^{k} + \frac{k(2\alpha+1)}{8e^2}x^{1+\alpha}(\log{x})^{k-1} + \frac{k(k-1)}{8e^2}x^{1+\alpha}(\log{x})^{k-2} \\ 
&+ \frac{\alpha(\alpha-1)(\alpha-2)}{8}x^3\int_{x}^{\infty}e^{-\frac{2u}{x}}\frac{(\log{u})^{k}}{u^{3-\alpha}}\dif{u} + \frac{k\left(3\alpha^2-6\alpha+2\right)}{8}x^3\int_{x}^{\infty}e^{-\frac{2u}{x}}\frac{(\log{u})^{k-1}}{u^{3-\alpha}}\dif{u} \\
&+ \frac{3(\alpha-1)k(k-1)}{8}x^3\int_{x}^{\infty}e^{-\frac{2u}{x}}\frac{(\log{u})^{k-2}}{u^{3-\alpha}}\dif{u} + \frac{k(k-1)(k-2)}{8}x^3\int_{x}^{\infty}e^{-\frac{2u}{x}}\frac{(\log{u})^{k-3}}{u^{3-\alpha}}\dif{u}
\end{flalign*}
for any $\alpha\in\R$ and $k\in\R$. Then
\begin{flalign}
\label{eq:d2Exp23}
- e^{-2}xD_{0}(x) + \int_{x}^{\infty}\left(\frac{2u}{x}-1\right)e^{-\frac{2u}{x}}D_{0}(u)\dif{u} &\leq \frac{9.73}{e^2}x^{\frac{7}{4}}\log{x} + \frac{19.46\log{x}}{x^{5/4}}\int_{x}^{\infty}e^{-\frac{2u}{x}}u^{2}\dif{u} \nonumber \\ 
&\leq 4.61x^{\frac{7}{4}}\log{x}.
\end{flalign}
Using
\[
\int_{x}^{\infty}e^{-\frac{2u}{x}}\frac{(\log{u})^{k}}{u^{\alpha}}\dif{u} \leq \frac{1}{2e^2}x^{1-\alpha}(\log{x})^{k}, \quad k\leq 2, \alpha\geq 1,
\]
we also obtain
\begin{gather*}
\int_{x}^{\infty}\left(\frac{2}{x}u^2-u\right)e^{-\frac{2u}{x}}(\log{u})^{3}\dif{u} \leq \frac{7}{4e^2}x^2(\log{x})^{3} + \frac{57}{16e^2}x^2(\log{x})^{2} + \frac{3}{e^2}x^2\log{x} + \frac{3}{4e^2}x^2, \\
\int_{x}^{\infty}\left(\frac{2}{x}u^2-u\right)e^{-\frac{2u}{x}}(\log{u})^{2}\dif{u} \leq \frac{7}{4e^2}x^2(\log{x})^2 + \frac{19}{8e^2}x^{2}\log{x} + \frac{1}{e^2}x^2, \\
\int_{x}^{\infty}\left(\frac{2}{x}u^2-u\right)e^{-\frac{2u}{x}}\log{u}\dif{u} \leq \frac{7}{4e^2}x^2\log{x} + \frac{19}{16e^2}x^2, \\
\int_{x}^{\infty}\left(\frac{2}{x}u^2-u\right)e^{-\frac{2u}{x}}\dif{u} \leq \frac{7}{4e^2}x^2.
\end{gather*}
Therefore,
\begin{multline}
\label{eq:d2Exp24}
-e^{-2}x^2\sum_{k=1}^{4}D_k(\log{x})^{4-k} + \sum_{k=1}^{4}D_k\int_{x}^{\infty}\left(\frac{2}{x}u^2-u\right)e^{-\frac{2u}{x}}(\log{u})^{4-k}\dif{u} \leq \frac{3D_1}{4e^2}x^2(\log{x})^{3} \\ 
+ \left(\frac{57D_1}{16e^2}+\frac{3D_2}{4e^2}\right)x^2(\log{x})^2 + \left(\frac{3D_1}{e^2}+\frac{19D_2}{8e^2}+\frac{3D_3}{4e^2}\right)x^{2}\log{x} + \left(\frac{3D_1}{4e^2}+\frac{D_2}{e^2}+\frac{19D_3}{16e^2}+\frac{3D_4}{4e^2}\right)x^2.
\end{multline}
It is straightforward to verify that~\eqref{eq:d2Exp2Aux} is valid by considering~\eqref{eq:d2Exp23} and~\eqref{eq:d2Exp24} in~\eqref{eq:d2Exp22}.
\end{proof}

\begin{lemma}
\label{lem:d2Exp3}
Let $x\geq x_0\geq 2$. Then
\[
\sum_{n\leq x}\frac{d^{2}(n)}{n^\sigma}\left(e^{-\frac{n}{x}}-1\right)^{2} \leq \mathcal{D}_{4}(x_0,\sigma-2)x^{1-\sigma}\log^{3}{x}
\]
for $\sigma\in[-1,2]$, where $\mathcal{D}_{4}(x_0,\sigma)$ is defined by~\eqref{eq:D4}.
\end{lemma}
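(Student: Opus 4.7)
The plan is to reduce this to the first estimate of Lemma~\ref{lem:d2} by using an elementary pointwise bound on the weight $\bigl(e^{-n/x}-1\bigr)^2$. Specifically, since $0 \le 1-e^{-u} \le u$ for all $u \ge 0$, we have $\bigl(e^{-n/x}-1\bigr)^2 \le (n/x)^2 = n^2/x^2$ for every $n \ge 1$, so
\[
\sum_{n\le x}\frac{d^{2}(n)}{n^{\sigma}}\bigl(e^{-n/x}-1\bigr)^{2} \le \frac{1}{x^{2}}\sum_{n\le x}\frac{d^{2}(n)}{n^{\sigma-2}}.
\]
The hypothesis $\sigma \in [-1,2]$ translates into $\sigma - 2 \in [-3,0] \subset [-3,1)$, which places us squarely within the range of validity of the first estimate in Lemma~\ref{lem:d2}. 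Applying that lemma with the shifted exponent $\sigma - 2$ in place of $\sigma$ gives
\[
\sum_{n\le x}\frac{d^{2}(n)}{n^{\sigma-2}} \le \mathcal{D}_{4}(x_{0},\sigma-2)\,x^{3-\sigma}\log^{3}{x}.
\]
Dividing by $x^{2}$ yields the stated bound immediately. There is no real obstacle here beyond verifying that the admissible range $\sigma \in [-1,2]$ of the lemma matches the admissible range $[-3,1)$ of the first estimate of Lemma~\ref{lem:d2} under the shift by $-2$, which it does.
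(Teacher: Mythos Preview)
Your proof is correct and follows essentially the same route as the paper: both use the elementary bound $|e^{-u}-1|\le u$ to obtain $\bigl(e^{-n/x}-1\bigr)^2\le n^2/x^2$, rewrite the sum as $x^{-2}\sum_{n\le x}d^{2}(n)n^{2-\sigma}$, and then appeal directly to Lemma~\ref{lem:d2} with the shifted exponent. Your explicit verification that $\sigma-2\in[-3,0]\subset[-3,1)$ falls within the admissible range of Lemma~\ref{lem:d2} is a nice touch that the paper leaves implicit.
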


\begin{proof}
We have 
\[
\sum_{n\leq x}\frac{d^{2}(n)}{n^\sigma}\left(e^{-\frac{n}{x}}-1\right)^{2} \leq \frac{1}{x^2}\sum_{n\leq x}d^{2}(n)n^{2-\sigma}
\]
since $\left|e^{-u}-1\right|\leq u$ for $u\in[0,1]$. The result now follows by Lemma~\ref{lem:d2}.
\end{proof}

\begin{remark}
The main terms in the estimates from Lemmas~\ref{lem:dExp},~\ref{lem:d2Exp2} and~\ref{lem:d2Exp3} can be improved for certain $\sigma$'s, e.g., essentially the same method as in the proof of Lemma~\ref{lem:d2Exp2} gives
\[
\sum_{n>x}d^{2}(n)e^{-\frac{2n}{x}} = \frac{1}{2(e\pi)^2}x\log^{3}{x} + O\left(x\log^{2}{x}\right).
\]
However, there are some technical difficulties to obtain asymptotically correct estimates when $\sigma$ is not an integer. Note that the above also shows that the first estimate of~\cite[p.~138]{Ivic} is not correct, although this does not change the final result.
\end{remark}

\subsection{Proof of Theorem~\ref{thm:4thPMExplicitGeneral}}
\label{subsec:ProofThm2}

In this section, we provide the proof of Theorem~\ref{thm:4thPMExplicitGeneral} by estimating each of the six parts in~\eqref{eq:4thPMmainEst} and~\eqref{eq:4thPMtermR}. In order to simplify some expressions throughout the following sections, we define
\[
\eta(T_0) \de 1 + \frac{\pi m_0}{T_0},
\]
where $m_0$ is from Lemma~\ref{lem:Mxone_half}. In addition, we assume the notation and conditions~(1)--(6) from Theorem~\ref{thm:4thPMExplicitGeneral}. Before proceeding to the proof, we need two additional results that are related to the Gamma function.

\begin{lemma}
\label{lem:chi}
Let $z=x+\ie y$ with $x\in[-1,1]$ and $|y|\geq y_0\geq 2$. Then
\begin{equation}
\label{eq:Gamma} 
\exp{\left(-\frac{1}{2y_0^2}\right)} \leq \frac{\left|\Gamma(z)\right|}{\sqrt{2\pi}|z|^{x-\frac{1}{2}}e^{-\frac{\pi}{2}|y|}} \leq \exp{\left(\frac{1}{2y_0^2}\right)}
\end{equation}
and
\begin{equation}
\label{eq:chiBound}
\mathcal{U}^{-}(y_0) \leq \left(\frac{|z|}{2\pi}\right)^{x-\frac{1}{2}}\left|\chi(z)\right| \leq \mathcal{U}^{+}(y_0), \quad \mathcal{U}^{\pm}(y_0)\de \left(1\mp e^{-\pi y_0}\right)^{-1}\exp{\left(\pm\frac{1}{2y_0^2}\right)}.
\end{equation}
In addition, $|\chi(z)|\leq 0.4|y|^{1/2-x}+16$ for $x\in[-1/2,0]$ and $y\in\R$, while $|\chi(z)|\leq 6.31/(1-x)$ for $x\in[1/2,1)$ and $y\in\R$.
\end{lemma}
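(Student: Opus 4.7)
The plan is to derive~\eqref{eq:Gamma} from Stirling's formula with explicit remainder, then feed this into~\eqref{eq:chiDef} to obtain~\eqref{eq:chiBound} and the two uniform bounds that follow. Writing $z = re^{\ie\theta}$ with $r = |z|$ and $\theta = \arg z$, I would start from
\[
\log \Gamma(z) = \left(z - \tfrac{1}{2}\right)\log z - z + \tfrac{1}{2}\log(2\pi) + J(z),
\]
valid for $\Re{z} > 0$ with $J(z)$ admitting an explicit bound of order $1/|z|$; when $x\in[-1,0)$, the reflection formula $\Gamma(z)\Gamma(1-z) = \pi/\sin(\pi z)$ reduces matters to the right half-plane together with a direct estimate of $|\sin(\pi z)|$. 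Taking real parts gives $(x-\tfrac{1}{2})\log r - y\theta - x$, and the expansion $\theta = \tfrac{\pi}{2}\operatorname{sgn}(y) - \arctan(x/y)$ combined with $y\arctan(x/y) = x - x^3/(3y^2) + O(1/y^4)$ yields $-y\theta - x = -\tfrac{\pi}{2}|y| - x^3\operatorname{sgn}(y)/(3y^2) + O(1/y^4)$. For $|x|\leq 1$ and $|y|\geq y_0\geq 2$ the combination of this correction with $J(z)$ is bounded in absolute value by $1/(2y_0^2)$, which is exactly~\eqref{eq:Gamma}.

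For~\eqref{eq:chiBound}, a one-line expansion of $\cos$ into real and imaginary parts gives $|\cos(\pi z/2)|^2 = \cos^2(\pi x/2) + \sinh^2(\pi y/2)$, whence $|\cos(\pi z/2)|$ is sandwiched between $\tfrac{1}{2}e^{\pi|y|/2}(1-e^{-\pi|y|})$ and $\tfrac{1}{2}e^{\pi|y|/2}(1+e^{-\pi|y|})$. Inserting these and~\eqref{eq:Gamma} into $|\chi(z)| = (2\pi)^x/(2|\Gamma(z)||\cos(\pi z/2)|)$ cancels the $e^{\pi|y|/2}$ factors, rearranges the powers of $|z|$ and $2\pi$ into $(|z|/2\pi)^{1/2-x}$, and coalesces the remaining multiplicative errors precisely into $\mathcal{U}^{\pm}(y_0)$.

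For the two supplementary estimates I would split the range of $y$ at a threshold $Y\geq 2$. When $x\in[-1/2,0]$ and $|y|\geq Y$,~\eqref{eq:chiBound} applies; since $|z|\leq |y|\sqrt{1+1/Y^2}$, the factor $(|z|/2\pi)^{1/2-x}\mathcal{U}^{+}(Y)$ is pointwise bounded by $0.4\,|y|^{1/2-x}$ provided $Y$ is chosen large enough, the binding case being $x=0$ where the coefficient tends to $(2\pi)^{-1/2}\approx 0.399$. For $|y|<Y$, $\chi(z)$ is analytic on the compact rectangle $[-1/2,0]\times[-Y,Y]$ (no poles, since all poles of $\chi$ lie at odd positive integers) and hence bounded there by a computable absolute constant, engineered to be at most $16$. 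When $x\in[1/2,1)$, I would extract the pole at $z=1$ from the alternative form $\chi(z) = \pi^{z-1/2}\Gamma((1-z)/2)/\Gamma(z/2)$, a restatement of the functional equation: $\Gamma((1-z)/2)$ contributes a simple pole of residue $-2$ at $z=1$, so $(1-z)\chi(z)$ extends holomorphically across $z=1$. A uniform bound on $(1-z)\chi(z)$ across the strip then combines~\eqref{eq:chiBound} for large $|y|$ with a compactness bound for bounded $|y|$, producing $|\chi(z)|\leq C/|1-z|\leq C/(1-x)$ with $C$ computable and $\leq 6.31$.

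The routine part is the Stirling asymptotics, available in several explicit forms in the literature. The main obstacle is pinning down the precise numerical constants $0.4$, $16$, and $6.31$: each must hold uniformly in $y\in\R$ across both the Stirling regime and the bounded regime, and the thresholds $Y$ together with the Stirling remainder estimates need to be balanced carefully so that the accumulated losses fit under these clean constants.
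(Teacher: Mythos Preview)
Your approach is essentially the same as the paper's: Stirling for~\eqref{eq:Gamma} (the paper simply cites~\cite[Theorem~2.1 (B2)]{DonaZunigaAlterman} rather than rederiving it), the cosine sandwich for~\eqref{eq:chiBound} (the paper cites~\cite[Proposition~2.2]{DonaZunigaAlterman} for the same inequality on $|\cos(\pi z/2)|$), and a large-$|y|$/small-$|y|$ split for the two supplementary bounds, with the bounded regime handled by the maximum principle and numerical verification. The paper's thresholds are $Y=10^2$ for the first bound and $Y=2\pi$ for the second.

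There is, however, one genuine slip in your treatment of $|\chi(z)|\leq 6.31/(1-x)$. The function $(1-z)\chi(z)$ is \emph{not} uniformly bounded on the strip $[1/2,1)\times\R$: already at $x=1/2$ one has $|\chi(1/2+\ie y)|=1$ identically, so $|(1-z)\chi(z)|=\sqrt{1/4+y^2}\to\infty$. Thus you cannot hope to get $|\chi(z)|\leq C/|1-z|$ globally with a finite $C$. The fix is exactly what the paper does: for $|y|\geq 2\pi$ bound $|\chi(z)|$ itself via~\eqref{eq:chiBound}, which gives $|\chi(z)|\leq \mathcal{U}^{+}(2\pi)(|z|/2\pi)^{1/2-x}\leq 1.1$ since $1/2-x\leq 0$ and $|z|\geq 2\pi$, and then note $1.1\leq 6.31/(1-x)$ trivially because $1-x\leq 1/2$. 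Only for $|y|\leq 2\pi$ do you extract the pole and bound $|(z-1)\chi(z)|\leq 6.31$ by compactness and numerical verification, whence $|\chi(z)|\leq 6.31/|z-1|\leq 6.31/(1-x)$.
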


\begin{proof}
Inequalities~\eqref{eq:Gamma} follow from~\cite[Theorem~2.1 (B2)]{DonaZunigaAlterman} with $\theta=\arctan{2}$. Inequalities~\eqref{eq:chiBound} follow from~\eqref{eq:chiDef},~\eqref{eq:Gamma} and
\[
1-e^{-\pi y_0} \leq 2e^{-\frac{\pi}{2}|y|}\left|\cos{\left(\frac{\pi z}{2}\right)}\right| \leq 1+e^{-\pi y_0},
\]
with the last inequalities true by~\cite[Proposition~2.2]{DonaZunigaAlterman}. For the second part, one can use~\eqref{eq:chiBound} to obtain $|\chi(z)|\leq 0.4|y|^{1/2-x}$ for $x\in[-1/2,0]$ and $|y|\geq 10^2$, while the maximum principle and numerical verification show that $|\chi(z)|\leq 16$ holds for $x\in[-1/2,0]$ and $|y|\leq 10^2$. Finally, $|\chi(z)|\leq 1.1$ for $x\in[1/2,1]$ and $|y|\geq 2\pi$, while $|(z-1)\chi(z)|\leq 6.31$ for $x\in[1/2,1]$ and $|y|\leq 2\pi$ by the maximum modulus principle and numerical verification.
\end{proof}

\begin{lemma}
\label{lem:res}
Let $t\geq T\geq 5$. Then
\[
\left|\Res_{z=\frac{1}{2}-\ie t}\left(\zeta^{2}{\left(\frac{1}{2}+\ie t+z\right)}\Gamma(z)T^{z}\right)\right| \leq 12\sqrt{T}e^{-\frac{\pi}{2}T}\log{T}.
\]
\end{lemma}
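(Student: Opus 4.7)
\medskip

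\noindent\textbf{Proof proposal.} The plan is to compute the residue explicitly and then estimate each factor. Substitute $w=z-(1/2-\ie t)$, so that $\zeta^{2}(1/2+\ie t+z)=\zeta^{2}(1+w)$ (which has a double pole at $w=0$) while the function $h(w)\de\Gamma(w+1/2-\ie t)T^{w+1/2-\ie t}$ is holomorphic at $w=0$ (since $1/2-\ie t$ is not a non-positive integer). Using $\zeta(1+w)=w^{-1}+\gamma+O(w)$, one has $\zeta^{2}(1+w)=w^{-2}+2\gamma w^{-1}+O(1)$, so that
\[
\Res_{w=0}\!\bigl(\zeta^{2}(1+w)h(w)\bigr)=h'(0)+2\gamma h(0)=\Gamma\!\left(\tfrac{1}{2}-\ie t\right)\bigl(\psi(\tfrac{1}{2}-\ie t)+\log T+2\gamma\bigr)T^{1/2-\ie t},
\]
where $\psi=\Gamma'/\Gamma$ is the digamma function. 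The residue now factorises cleanly, so the task reduces to bounding three quantities.

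The factor $|T^{1/2-\ie t}|=\sqrt{T}$ is trivial. For the Gamma factor, I would apply~\eqref{eq:Gamma} of Lemma~\ref{lem:chi} with $x=1/2$ and $y_0=T\geq 5$ (valid since $|y|=t\geq T$), which yields
\[
\left|\Gamma\!\left(\tfrac{1}{2}-\ie t\right)\right|\leq \sqrt{2\pi}\,e^{-\pi t/2}\,\exp\!\left(\tfrac{1}{2T^{2}}\right).
\]
For the digamma factor I would use the classical Binet-type representation
\[
\psi(s)=\log s-\frac{1}{2s}-2\int_{0}^{\infty}\frac{u\,\dif{u}}{(u^{2}+s^{2})(e^{2\pi u}-1)},\qquad \Re\{s\}>0.
\]
Setting $s=1/2-\ie t$ we have $|s|=\sqrt{1/4+t^{2}}$, $|\log s|\leq \log|s|+\pi/2$, and the integral is bounded by $(2/t)\int_{0}^{\infty}u(e^{2\pi u}-1)^{-1}\dif{u}=1/(12t)$ (using $|u^{2}+s^{2}|\geq|\Im(u^{2}+s^{2})|=t$ and the classical value $\zeta(2)$). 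This gives an explicit bound $|\psi(1/2-\ie t)|\leq \log t+c_{0}$ for some small absolute constant $c_{0}$ when $t\geq T\geq 5$, and hence
\[
\bigl|\psi(\tfrac{1}{2}-\ie t)+\log T+2\gamma\bigr|\leq 2\log t+c_{1}
\]
with an explicit $c_{1}$.

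Combining the three estimates gives a bound of the form $C\sqrt{T}\,e^{-\pi t/2}(2\log t+c_{1})$ with $C=\sqrt{2\pi}\exp(1/50)$. To pass from $t$ to $T$, I would note that the function $t\mapsto e^{-\pi t/2}(2\log t+c_{1})$ is monotonically decreasing for $t\geq 1$ (immediate from differentiation, since $\pi(2\log t+c_{1})/2>1/t$ there), so we may replace $t$ by $T$ throughout. A short numerical check then shows that the resulting constant is at most $12$ once $T\geq 5$, since at $t=T$ the inequality $2.56(2\log T+c_{1})\leq 12\log T$ reduces to a mild lower bound on $T$ that is comfortably satisfied.

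The main obstacle will be keeping the constants in the digamma bound under control, so that the cumulative prefactor fits within the advertised $12$. The integral representation above keeps everything explicit and avoids appealing to any asymptotic with implicit constants, which is the natural strategy in an explicit paper.
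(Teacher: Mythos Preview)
Your proposal is correct and follows essentially the same route as the paper: both compute the residue explicitly as $\Gamma(1/2-\ie t)\bigl(\psi(1/2-\ie t)+\log T+2\gamma\bigr)T^{1/2-\ie t}$, bound $|\Gamma(1/2-\ie t)|$ via~\eqref{eq:Gamma}, bound the digamma term by $\log t$ plus a small constant, and then use monotonicity of $e^{-\pi t/2}\log t$ to pass from $t$ to $T$. The only difference is cosmetic: the paper quotes an explicit digamma estimate from Olver, whereas you derive an equivalent bound directly from the Binet integral (both yield roughly $|\psi(1/2-\ie t)|\le\log t+1.7$); your derivative condition should read $\pi(2\log t+c_1)/2>2/t$ rather than $>1/t$, but this is a harmless slip.
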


\begin{proof}
The Laurent expansion of $\zeta(s)$ at $s=1$ gives $(s-1)^{2}\zeta^{2}(s)=1+2\gamma(s-1)+(s-1)^{2}h(s)$ for some entire function $h(s)$, where $\gamma$ is the Euler--Mascheroni constant. Note that the preceding equality is valid for all $s\in\C$. Therefore,
\begin{flalign*}
\Res_{z=\frac{1}{2}-\ie t}\left(\zeta^{2}{\left(\frac{1}{2}+\ie t+z\right)}\Gamma(z)T^{z}\right) &= \lim_{z\to \frac{1}{2}-\ie t} \frac{\dif{}}{\dif{z}}\left(\left(z-\frac{1}{2}+\ie t\right)^{2}\zeta^{2}{\left(\frac{1}{2}+\ie t+z\right)}\Gamma(z)T^{z}\right) \\
&= \left(2\gamma + \frac{\Gamma'}{\Gamma}\left(\frac{1}{2}-\ie t\right)+\log{T}\right)T^{\frac{1}{2}-\ie t}\Gamma\left(\frac{1}{2}-\ie t\right).
\end{flalign*}
By~\cite[Exercise~4.2 on p.~295]{Olver} we have 
\[
\left|\frac{\Gamma'}{\Gamma}\left(\frac{1}{2}-\ie t\right)\right|\leq \log{t} + \frac{\pi}{2} + \frac{1}{2}\log{\left(1+\frac{1}{4T^2}\right)}+\frac{1}{2T}+\frac{1}{3\sqrt{2}T^2} \leq \log{t} + 1.7.
\]
By~\eqref{eq:Gamma} we also have $\left|\Gamma\left(1/2-\ie t\right)\right| \leq 3e^{-\frac{\pi}{2}t}$. The stated inequality now follows since $2\gamma+1.7+\log{T}\leq 3\log{t}$ and $e^{-\pi t/2}\log{t}$ is a decreasing function for $t\geq 5$.
\end{proof}

\subsubsection{I. part}

By Lemmas~\ref{lem:MV},~\ref{lem:d2} and~\ref{lem:d2Exp1} we have
\begin{equation}
\label{eq:J1}
\left|2\int_{T}^{2T}\left|J_1(t)\right|^{2}\dif{t} - \frac{1}{2\pi^2}T\log^{4}{T}\right| \leq \mathcal{J}_{1}(T_0)T\log^{3}{T},
\end{equation}
where
\begin{equation}
\label{eq:cJ1}
\mathcal{J}_{1}(T_0)\de 2\eta(T_0)\mathcal{D}_{6}(T_0)+4\pi m_0\mathcal{D}_{4}(T_0,0)+\frac{m_0\log{T_0}}{2\pi T_0},
\end{equation}
and $\mathcal{D}_{6}(x_0)$ and $\mathcal{D}_{4}(x_0,\sigma)$ are defined by~\eqref{eq:D6} and~\eqref{eq:D4}, respectively. This shows that the main contribution in~\eqref{eq:4thPMExplicitGeneral} comes from this part. In the subsequent sections, we show that all other terms are $\ll T\log^{3}{T}$.

\subsubsection{II. part}
\label{sec:IIpart}

Let $f(z)\de \chi^{-1}(z)\sum_{n\leq T}n^{-z}d(n)$. It is clear that this is a holomorphic function in $\{z\in\C\colon |\Im\{z\}|>1\}$. By Cauchy's theorem,
\begin{flalign}
\ie\int_{T}^{2T}\left(J_1^2(t)+J_2^2(t)\right)\dif{t} &= \left(\int_{\frac{1}{2}+\ie T}^{\frac{1}{2}+2\ie T}+\int_{\frac{1}{2}-2\ie T}^{\frac{1}{2}-\ie T}\right)f^{2}(z)\dif{z} \nonumber \\
&= \left(\int_{\sigma_1+\ie T}^{\sigma_1+2\ie T} + \int_{\sigma_1-2\ie T}^{\sigma_1-\ie T}\right)f^{2}(z)\dif{z} \nonumber \\
&+ \left(\int_{\frac{1}{2}+\ie T}^{\sigma_1+\ie T} + \int_{\frac{1}{2}-2\ie T}^{\sigma_1-2\ie T}\right)f^{2}(z)\dif{z} + \left(\int_{\sigma_1+2\ie T}^{\frac{1}{2}+2\ie T}+\int_{\sigma_1-\ie T}^{\frac{1}{2}-\ie T}\right)f^{2}(z)\dif{z}. \label{eq:secterm}
\end{flalign}
We have
\begin{flalign*}
\left|\left(\int_{\frac{1}{2}+\ie T}^{\sigma_1+\ie T} + \int_{\frac{1}{2}-2\ie T}^{\sigma_1-2\ie T}\right)f^{2}(z)\dif{z}\right| &\leq \int_{\sigma_1}^{\frac{1}{2}}\left(\left|\chi(u+\ie T)\right|^{-2}+\left|\chi(u+2\ie T)\right|^{-2}\right)\left(\sum_{n\leq T}n^{-u}d(n)\right)^{2}\dif{u} \\
&\leq \left(\frac{\mathcal{D}_1(T_0,0)}{\mathcal{U}^{-}(T_0)}\right)^{2}\left(\int_{\sigma_1}^{\frac{1}{2}}\frac{(2\pi)^{1-2u}+\pi^{1-2u}}{(1-u)^2}\dif{u}\right)T\log^{2}{T}
\end{flalign*}
by Lemmas~\ref{lem:d} and~\ref{lem:chi}, with the same upper bound holding also for the modulus of the last pair of integrals in~\eqref{eq:secterm}. Next,
\begin{flalign*}
\left|\left(\int_{\sigma_1+\ie T}^{\sigma_1+2\ie T} + \int_{\sigma_1-2\ie T}^{\sigma_1-\ie T}\right)f^{2}(z)\dif{z}\right| &\leq 2\int_{T}^{2T}\left|\chi(\sigma_1+\ie u)\right|^{-2}\left|\sum_{n\leq T}\frac{d(n)}{n^{\sigma_1+\ie u}}\right|^{2}\dif{u} \\ 
&\leq \frac{2}{\left(\mathcal{U}^{-}(T_0)\right)^{2}}\left(\frac{T}{2\pi}\right)^{2\sigma_{1}-1}\int_{T}^{2T}\left|\sum_{n\leq T}\frac{d(n)}{n^{\sigma_1+\ie u}}\right|^{2}\dif{u} \\
&\leq \frac{2(2\pi)^{1-2\sigma_1}}{\left(\mathcal{U}^{-}(T_0)\right)^2}\left(\eta(T_0)\mathcal{D}_{4}(T_0,2\sigma_1)+2\pi m_0\mathcal{D}_{4}(T_0,2\sigma_1-1)\right)T\log^{3}{T}
\end{flalign*}
by Lemmas~\ref{lem:chi},~\ref{lem:MV} and~\ref{lem:d2}. Therefore,
\begin{equation}
\label{eq:J1J2square}
\int_{T}^{2T}\left(J_1^2(t)+J_2^2(t)\right)\dif{t} \leq \mathcal{J}_{2,1}(T_0,\sigma_1)T\log^{3}{T},
\end{equation}
where
\begin{multline}
\label{eq:cJ21}
\mathcal{J}_{2,1}(T_0,\sigma_1) \de \frac{2(2\pi)^{1-2\sigma_1}}{\left(\mathcal{U}^{-}(T_0)\right)^2}\left(\eta(T_0)\mathcal{D}_{4}(T_0,2\sigma_1)+2\pi m_0\mathcal{D}_{4}(T_0,2\sigma_1-1)\right) \\ 
+ \frac{2}{\log{T_0}}\left(\frac{\mathcal{D}_1(T_0,0)}{\mathcal{U}^{-}(T_0)}\right)^{2}\int_{\sigma_1}^{\frac{1}{2}}\frac{(2\pi)^{1-2u}+\pi^{1-2u}}{(1-u)^2}\dif{u},
\end{multline}
and $\mathcal{D}_{4}(x_0,\sigma)$, $\mathcal{D}_{1}(x_0,\sigma_0)$ and $\mathcal{U}^{-}(y_0)$ are defined by~\eqref{eq:D4},~\eqref{eq:D1} and~\eqref{eq:chiBound}, respectively.

\subsubsection{III. part}

This part is the most challenging to estimate, and we need to divide it into several new terms. First, we estimate it for $k\in\{3,4\}$. Let 
\[
g_1(z)\de \chi(z)^{-1}\sum_{n>T}\frac{d(n)}{n^{z}}e^{-n/T}, \quad g_2(z)\de \chi(z)^{-1}\sum_{n\leq T}\frac{d(n)}{n^{z}}\left(e^{-n/T}-1\right). 
\]
It is clear that $g_1(z)$ and $g_2(z)$ are holomorphic functions in $\{z\in\C\colon |\Im\{z\}|>1\}$. Define $F(z)\de F_{1}(z)+F_2(z)$, where $F_1(z)\de f(1-z)\left(g_1(z)+g_2(z)\right)$ and $F_2(z)\de f(z)\left(g_1(z)+g_2(z)\right)$ with $f(z)$ defined as in Section~\ref{sec:IIpart}. By Cauchy's theorem,
\begin{multline}
\label{eq:thirdterm1}
\ie\int_{T}^{2T}\left(J_1(t)+J_2(t)\right)\left(J_3(t)+J_4(t)\right)\dif{t} = \int_{\frac{1}{2}+\ie T}^{\frac{1}{2}+2\ie T}F(z)\dif{z} = \int_{\sigma_2+\ie T}^{\sigma_2+2\ie T}F_1(z)\dif{z} + \int_{\sigma_3+\ie T}^{\sigma_3+2\ie T}F_2(z)\dif{z} \\
+ \left(\int_{\frac{1}{2}+\ie T}^{\sigma_2+\ie T}+\int_{\sigma_2+2\ie T}^{\frac{1}{2}+2\ie T}\right)F_1(z)\dif{z} + \left(\int_{\frac{1}{2}+\ie T}^{\sigma_3+\ie T}+\int_{\sigma_3+2\ie T}^{\frac{1}{2}+2\ie T}\right)F_{2}(z)\dif{z}.
\end{multline}
We need to estimate the moduli of each of the above terms. Starting with the first two terms on the right-hand side of~\eqref{eq:thirdterm1}, we have
\begin{multline*}
\left|\int_{\sigma_2+\ie T}^{\sigma_2+2\ie T}F_1(z)\dif{z}\right| \leq \frac{1}{\left(\mathcal{U}^{-}(T_0)\right)^{2}}\left(2+\frac{1}{T_0}\right)^{\sigma_2-\frac{1}{2}}\left(\int_{T}^{2T}\left|\sum_{n\leq T}\frac{d(n)}{n^{1-\sigma_2+\ie u}}\right|^{2}\dif{u}\right)^{\frac{1}{2}} \times \\
\times\left(\left(\int_{T}^{2T}\left|\sum_{n>T}\frac{d(n)}{n^{\sigma_2+\ie u}}e^{-n/T}\right|^{2}\dif{u}\right)^{\frac{1}{2}}+\left(\int_{T}^{2T}\left|\sum_{n\leq T}\frac{d(n)}{n^{\sigma_2+\ie u}}\left(e^{-n/T}-1\right)\right|^{2}\dif{u}\right)^{\frac{1}{2}}\right)
\end{multline*}
and
\begin{multline*}
\left|\int_{\sigma_3+\ie T}^{\sigma_3+2\ie T}F_2(z)\dif{z}\right| \leq \frac{1}{\left(\mathcal{U}^{-}(T_0)\right)^{2}}\left(\frac{2\pi}{T}\right)^{1-2\sigma_3}\left(\int_{T}^{2T}\left|\sum_{n\leq T}\frac{d(n)}{n^{\sigma_3+\ie u}}\right|^{2}\dif{u}\right)^{\frac{1}{2}} \times \\
\times\left(\left(\int_{T}^{2T}\left|\sum_{n>T}\frac{d(n)}{n^{\sigma_3+\ie u}}e^{-n/T}\right|^{2}\dif{u}\right)^{\frac{1}{2}}+\left(\int_{T}^{2T}\left|\sum_{n\leq T}\frac{d(n)}{n^{\sigma_3+\ie u}}\left(e^{-n/T}-1\right)\right|^{2}\dif{u}\right)^{\frac{1}{2}}\right)
\end{multline*}
by the CBS inequality and Lemma~\ref{lem:chi}. Then
\begin{equation}
\label{eq:F1F2main}
\left|\int_{1+\ie T}^{1+2\ie T}F_1(z)\dif{z} + \int_{\ie T}^{2\ie T}F_2(z)\dif{z}\right| \leq \mathfrak{G}_{1}\left(T_0,\bm{\sigma'}\right)T\log^{3}{T}
\end{equation}
by Lemmas~\ref{lem:MV},~\ref{lem:d2},~\ref{lem:d2Exp2} and~\ref{lem:d2Exp3}, where
\begin{flalign}
\label{eq:fG1}
\mathfrak{G}_{1}\left(T_0,\bm{\sigma'}\right) &\de \frac{1}{\left(\mathcal{U}^{-}(T_0)\right)^{2}}\left(2+\frac{1}{T_0}\right)^{\sigma_2-\frac{1}{2}}\left(\eta(T_0)\mathcal{D}_{4}(T_0,2-2\sigma_2)+2\pi m_0\mathcal{D}_{4}(T_0,1-2\sigma_2)\right)^{\frac{1}{2}}\times \nonumber \\
&\times\left(\left(\eta(T_0)+2\pi m_0\right)^{\frac{1}{2}}\mathcal{D}_{7}(T_0)^{\frac{1}{2}}+\left(\eta(T_0)\mathcal{D}_{4}(T_0,2\sigma_2-2)+2\pi m_0\mathcal{D}_{4}(T_0,2\sigma_2-3)
\right)^{\frac{1}{2}}\right)\nonumber \\
&+ \frac{(2\pi)^{1-2\sigma_3}}{\left(\mathcal{U}^{-}(T_0)\right)^{2}}\left(\eta(T_0)\mathcal{D}_{4}(T_0,2\sigma_3)+2\pi m_0\mathcal{D}_{4}(T_0,2\sigma_3-1)\right)^{\frac{1}{2}} \times \nonumber \\
&\times\left(\left(\eta(T_0)+2\pi m_0\right)^{\frac{1}{2}}\mathcal{D}_{7}(T_0)^{\frac{1}{2}}+\left(\eta(T_0)\mathcal{D}_{4}(T_0,2\sigma_3-2)+2\pi m_0\mathcal{D}_{4}(T_0,2\sigma_3-3)
\right)^{\frac{1}{2}}\right).
\end{flalign}
Here, $\mathcal{D}_{4}(x_0,\sigma)$, $\mathcal{D}_{7}(x_0)$ and $\mathcal{U}^{-}(y_0)$ are defined by~\eqref{eq:D4},~\eqref{eq:D7} and~\eqref{eq:chiBound}, respectively. Turning to the last two terms in~\eqref{eq:thirdterm1}, we have
\begin{multline*}
\left|\left(\int_{\frac{1}{2}+\ie T}^{\sigma_2+\ie T}+\int_{\sigma_2+2\ie T}^{\frac{1}{2}+2\ie T}\right)F_1(z)\dif{z}\right| \leq \frac{1}{\left(\mathcal{U}^{-}(T_0)\right)^{2}}\int_{\frac{1}{2}}^{\sigma_2}\left(\left(1+\frac{1}{T_0}\right)^{u-\frac{1}{2}}+\left(1+\frac{1}{2T_0}\right)^{u-\frac{1}{2}}\right)\times \\
\times\left(\sum_{n>T}\frac{d(n)}{n^u}e^{-n/T}+\sum_{n\leq T}\frac{d(n)}{n^u}\left(1-e^{-n/T}\right)\right)\sum_{n\leq T}\frac{d(n)}{n^{1-u}}\dif{u}
\end{multline*}
and
\begin{multline*}
\left|\left(\int_{\frac{1}{2}+\ie T}^{\sigma_3+\ie T}+\int_{\sigma_3+2\ie T}^{\frac{1}{2}+2\ie T}\right)F_{2}(z)\dif{z}\right| \leq \frac{1}{\left(\mathcal{U}^{-}(T_0)\right)^{2}}\int_{\sigma_3}^{\frac{1}{2}}\left((2\pi)^{1-2u}+\pi^{1-2u}\right)T^{2u-1}\times \\
\times\left(\sum_{n>T}\frac{d(n)}{n^u}e^{-n/T}+\sum_{n\leq T}\frac{d(n)}{n^u}\left(1-e^{-n/T}\right)\right)\sum_{n\leq T}\frac{d(n)}{n^{u}}\dif{u}
\end{multline*}
by Lemma~\ref{lem:chi}. Then
\begin{equation}
\label{eq:F1F2res}
\left|\left(\int_{\frac{1}{2}+\ie T}^{\sigma_2+\ie T}+\int_{\sigma_2+2\ie T}^{\frac{1}{2}+2\ie T}\right)F_1(z)\dif{z} + \left(\int_{\frac{1}{2}+\ie T}^{\sigma_3+\ie T}+\int_{\sigma_3+2\ie T}^{\frac{1}{2}+2\ie T}\right)F_{2}(z)\dif{z}\right| \leq \mathfrak{G}_2\left(T_0,\bm{\sigma'}\right)T\log^{2}{T}
\end{equation}
by Lemmas~\ref{lem:d} and~\ref{lem:dExp}, where
\begin{multline}
\label{eq:fG2}
\mathfrak{G}_2\left(T_0,\bm{\sigma'}\right) \de \frac{\mathcal{D}_1(T_0,0)}{\left(\mathcal{U}^{-}(T_0)\right)^{2}}\int_{\frac{1}{2}}^{\sigma_2}\left(\left(1+\frac{1}{T_0}\right)^{u-\frac{1}{2}}+\left(1+\frac{1}{2T_0}\right)^{u-\frac{1}{2}}\right)\left(\frac{\mathcal{D}_1(T_0,u-1)}{2-u}+\mathcal{D}_{3}(T_0)\right)\frac{\dif{u}}{u} \\
+ \frac{\mathcal{D}_1(T_0,0)}{\left(\mathcal{U}^{-}(T_0)\right)^{2}}\int_{\sigma_3}^{\frac{1}{2}}\left((2\pi)^{1-2u}+\pi^{1-2u}\right)\left(\frac{\mathcal{D}_1(T_0,u-1)}{2-u}+\mathcal{D}_{3}(T_0)\right)\frac{\dif{u}}{1-u}.
\end{multline}
Here, $\mathcal{D}_{1}(x_0,\sigma_0)$, $\mathcal{D}_{3}(x_0)$ and $\mathcal{U}^{-}(y_0)$ are defined by~\eqref{eq:D1},~\eqref{eq:D3} and~\eqref{eq:chiBound}, respectively.

It remains to estimate the third part also for $k\in\{5,6\}$, and the procedure is very similar to the one in the preceding section. Define
\begin{gather*}
g_3(z)\de -\frac{1}{2\pi}\chi^{-1}(z)\int_{-\mathfrak{a}_1(\log{T})^{\mathfrak{a}_2}}^{\mathfrak{a}_1(\log{T})^{\mathfrak{a}_2}}\chi^{2}(z+c_1+\ie u)\Gamma(c_1+\ie u)T^{c_1+\ie u}\sum_{n>T}\frac{d(n)}{n^{1-z-c_1-\ie u}}\dif{u}, \\
g_4(z)\de -\frac{1}{2\pi}\chi^{-1}(z)\int_{-\mathfrak{b}_1(\log{T})^{\mathfrak{b}_2}}^{\mathfrak{b}_1(\log{T})^{\mathfrak{b}_2}}\chi^{2}(z+c_2+\ie u)\Gamma(c_2+\ie u)T^{c_2+\ie u}\sum_{n\leq T}\frac{d(n)}{n^{1-z-c_2-\ie u}}\dif{u}.
\end{gather*}
It is clear that $g_3(z)$ and $g_4(z)$ are holomorphic functions in $\{z\in\C\colon 1/2-c_2<\Re\{z\}<|c_1|,\Im\{z\}>1+\mathfrak{b_1}(\log{T})^{\mathfrak{b}_2}\}$. Define $G(z)\de G_1(z)+G_2(z)$, where $G_1(z)\de f(1-z)\left(g_3(z)+g_4(z)\right)$ and $G_2(z)\de f(z)\left(g_3(z)+g_4(z)\right)$ with $f(z)$ defined as in Section~\ref{sec:IIpart}. By Cauchy's theorem,
\begin{multline}
\label{eq:thirdterm2}
\ie\int_{T}^{2T}\left(J_1(t)+J_2(t)\right)\left(J_5(t)+J_6(t)\right)\dif{t} = \int_{\frac{1}{2}+\ie T}^{\frac{1}{2}+2\ie T}G(z)\dif{z} = \int_{\sigma_4+\ie T}^{\sigma_4+2\ie T}G_1(z)\dif{z} + \int_{\sigma_5+\ie T}^{\sigma_5+2\ie T}G_2(z)\dif{z} \\
+ \left(\int_{\frac{1}{2}+\ie T}^{\sigma_4+\ie T}+\int_{\sigma_4+2\ie T}^{\frac{1}{2}+2\ie T}\right)G_1(z)\dif{z} + \left(\int_{\frac{1}{2}+\ie T}^{\sigma_5+\ie T}+\int_{\sigma_5+2\ie T}^{\frac{1}{2}+2\ie T}\right)G_{2}(z)\dif{z}.
\end{multline}
We need to estimate the moduli of each of the above terms. Starting with the first term on the right-hand side of~\eqref{eq:thirdterm2}, note that $0<1-\sigma_4<1/2$. By the CBS inequality and Lemma~\ref{lem:chi} we then have
\begin{multline}
\label{eq:G1First}
\left|\int_{\sigma_4+\ie T}^{\sigma_4+2\ie T}G_1(z)\dif{z}\right| \leq \frac{1}{\mathcal{U}^{-}(T_0)}\left(\frac{2\pi}{T}\right)^{\sigma_4-\frac{1}{2}}\left(\int_{T}^{2T}\left|\sum_{n\leq T}\frac{d(n)}{n^{1-\sigma_4-\ie t}}\right|^{2}\dif{t}\right)^{\frac{1}{2}} \times \\
\times\left(\left(\int_{T}^{2T}\left|g_3(\sigma_4+\ie t)\right|^{2}\dif{t}\right)^{\frac{1}{2}}+\left(\int_{T}^{2T}\left|g_4(\sigma_4+\ie t)\right|^{2}\dif{t}\right)^{\frac{1}{2}}\right).
\end{multline}
We estimate the second integral on the right-hand side of~\eqref{eq:G1First}. Note that $-1/2<c_1+\sigma_4<0$, implying $|c_1+\sigma_4+\ie(t+u)|\leq 1/2+2T+\mathfrak{a}_1(\log{T})^{\mathfrak{a}_2}$ since $t\in[T,2T]$ and $|u|\leq \mathfrak{a}_1\left(\log{T}\right)^{\mathfrak{a}_2}$. By the CBS inequality and Lemma~\ref{lem:chi} we thus have
\begin{flalign*}
\int_{T}^{2T}\left|g_3(\sigma_4+\ie t)\right|^{2}\dif{t} &\leq \frac{T^{1-2\sigma_4+2|c_1|}}{\left(2\pi\cdot\mathcal{U}^{-}(T_0)\right)^{2}}\left(\frac{1}{\pi}+\frac{1}{2\pi T_0}\right)^{2\sigma_4-1}\left(\frac{1}{\pi}+\frac{\frac{1}{2}+\mathfrak{a}_1(\log{T_0})^{\mathfrak{a}_2}}{2\pi T_0}\right)^{2\left(1-2\sigma_4\right)+4|c_1|}\times \\
&\times \left(\mathcal{U}^{+}\left(T_0-\mathfrak{a}_1(\log{T_0})^{\mathfrak{a}_2}\right)\right)^{4}\int_{-\infty}^{\infty}\left|\Gamma(c_1+\ie u)\right|\dif{u}\times \\
&\times\int_{-\mathfrak{a}_1\left(\log{T}\right)^{\mathfrak{a}_2}}^{\mathfrak{a}_1\left(\log{T}\right)^{\mathfrak{a}_2}}\left|\Gamma(c_1+\ie u)\right|\left(\int_{T}^{2T}\left|\sum_{n>T}\frac{d(n)}{n^{1-\sigma_4-c_1-\ie(t+u)}}\right|^{2}\dif{t}\right)\dif{u}.
\end{flalign*}
Moreover,
\begin{multline*}
\int_{-\mathfrak{a}_1\left(\log{T}\right)^{\mathfrak{a}_2}}^{\mathfrak{a}_1\left(\log{T}\right)^{\mathfrak{a}_2}}\left|\Gamma(c_1+\ie u)\right|\left(\int_{T}^{2T}\left|\sum_{n>T}\frac{d(n)}{n^{1-\sigma_4-c_1-\ie(t+u)}}\right|^{2}\dif{t}\right)\dif{u}
\leq \int_{-\infty}^{\infty}\left|\Gamma(c_1+\ie u)\right|\dif{u} \times \\
\times\left(\frac{\eta(T_0)\mathcal{D}_{5}\left(T_0,2-2\sigma_4+2|c_1|\right)}{(1-2\sigma_4+2|c_1|)^4}+\frac{2\pi m_0\mathcal{D}_{5}\left(T_0,1-2\sigma_4+2|c_1|\right)}{\left(2|c_1|-2\sigma_4\right)^4}\right)T^{2\sigma_4-2|c_1|}\log^{3}{T}
\end{multline*}
by Lemmas~\ref{lem:MV} and~\ref{lem:d2}. Therefore,
\begin{equation}
\label{eq:intg3c1}
\int_{T}^{2T}\left|g_3(\sigma_4+\ie t)\right|^{2}\dif{t} \leq \mathfrak{g}_1\left(T_0,\sigma_4,c_1,\bm{\mathfrak{a}}\right)T\log^{3}{T},
\end{equation}
where
\begin{flalign}
\label{eq:fg1}
\mathfrak{g}_1\left(T_0,\sigma_4,c_1,\bm{\mathfrak{a}}\right) &\de \frac{1}{\left(\mathcal{U}^{-}(T_0)\right)^{2}}\left(\frac{1}{\pi}+\frac{1}{2\pi T_0}\right)^{2\sigma_4-1}\left(\frac{1}{\pi}+\frac{\frac{1}{2}+\mathfrak{a}_1(\log{T_0})^{\mathfrak{a}_2}}{2\pi T_0}\right)^{2\left(1-2\sigma_4\right)+4|c_1|}\times \nonumber \\
&\times \left(\mathcal{U}^{+}\left(T_0-\mathfrak{a}_1(\log{T_0})^{\mathfrak{a}_2}\right)\right)^{4}\left(\int_{-\infty}^{\infty}\left|\Gamma(c_1+\ie u)\right|\dif{u}\right)^{2}\times \nonumber \\
&\times\left(\frac{\eta(T_0)\mathcal{D}_{5}\left(T_0,2-2\sigma_4+2|c_1|\right)}{(2\pi)^2(1-2\sigma_4+2|c_1|)^4}+\frac{m_0\mathcal{D}_{5}\left(T_0,1-2\sigma_4+2|c_1|\right)}{2\pi\left(2|c_1|-2\sigma_4\right)^4}\right).
\end{flalign}
Here, $\mathcal{D}_{5}(x_0,\sigma)$ and $\mathcal{U}^{\pm}(y_0)$ are defined by~\eqref{eq:D5} and~\eqref{eq:chiBound}, respectively. Turning to the third integral on the right-hand side of~\eqref{eq:G1First}, note that $1/2<c_2+\sigma_4<3/2$ and $|\sigma_4+c_2+\ie(t+u)|\geq T-\mathfrak{b}_1(\log{T})^{\mathfrak{b}_2}$ since $t\in[T,2T]$ and $|u|\leq \mathfrak{b}_1\left(\log{T}\right)^{\mathfrak{b}_2}$. Also, $-1<2(1-\sigma_4-c_2)<1$. Similarly as before, the CBS inequality and Lemmas~\ref{lem:MV},~\ref{lem:d2} and~\ref{lem:chi} imply
\begin{equation}
\label{eq:intg4c1}
\int_{T}^{2T}\left|g_4(\sigma_4+\ie t)\right|^{2}\dif{t} \leq \mathfrak{g}_{2}\left(T_0,\sigma_4,c_2,\bm{\mathfrak{b}}\right)T\log^{3}{T},
\end{equation}
where
\begin{flalign}
\label{eq:fg2}
\mathfrak{g}_{2}\left(T_0,\sigma_4,c_2,\bm{\mathfrak{b}}\right) &\de \frac{1}{\left(\mathcal{U}^{-}(T_0)\right)^{2}}\left(\frac{1}{\pi}+\frac{1}{2\pi T_0}\right)^{2\sigma_4-1}\left(\frac{1}{2\pi}-\frac{\mathfrak{b}_1\left(\log{T_0}\right)^{\mathfrak{b}_2}}{2\pi T_0}\right)^{2(1-2(\sigma_4+c_2))}\times \nonumber \\
&\times \left(\mathcal{U}^{+}\left(T_0-\mathfrak{b}_1\left(\log{T_0}\right)^{\mathfrak{b}_2}\right)\right)^{4}\left(\int_{-\infty}^{\infty}\left|\Gamma(c_2+\ie u)\right|\dif{u}\right)^{2}\times \nonumber \\
&\times\left(\frac{\eta(T_0)\mathcal{D}_{4}\left(T_0,2\left(1-\sigma_4-c_2\right)\right)}{(2\pi)^2}+\frac{m_0\mathcal{D}_{4}\left(T_0,1-2\left(\sigma_4+c_2\right)\right)}{2\pi}\right).
\end{flalign}
Here, $\mathcal{D}_{4}(x_0,\sigma)$ and $\mathcal{U}^{\pm}(y_0)$ are defined by~\eqref{eq:D4} and~\eqref{eq:chiBound}, respectively. Note that~\eqref{eq:intg3c1} and~\eqref{eq:intg4c1} also hold for $\sigma_4=1/2$. This fact will be used in Section~\ref{sec:IVpart}. Using~\eqref{eq:intg3c1} and~\eqref{eq:intg4c1} in~\eqref{eq:G1First}, together with Lemmas~\ref{lem:MV} and~\ref{lem:d2} to estimate the first integral on the right-hand side of~\eqref{eq:G1First}, gives
\begin{equation}
\label{eq:G1main}
\left|\int_{\sigma_4+\ie T}^{\sigma_4+2\ie T}G_1(z)\dif{z}\right| \leq \mathfrak{G}_3\left(T_0,\sigma_4,\bm{c},\bm{\mathfrak{a}},\bm{\mathfrak{b}}\right)T\log^{3}{T},
\end{equation}
where
\begin{flalign}
\label{eq:fG3}
\mathfrak{G}_3\left(T_0,\sigma_4,\bm{c},\bm{\mathfrak{a}},\bm{\mathfrak{b}}\right) &\de \frac{(2\pi)^{\sigma_4}}{\mathcal{U}^{-}(T_0)}\left(\frac{\eta(T_0)}{2\pi}\mathcal{D}_{4}(T_0,2(1-\sigma_4))+ m_0\mathcal{D}_{4}(T_0,1-2\sigma_4)\right)^{\frac{1}{2}}\times \nonumber \\
&\times\left(\mathfrak{g}_1\left(T_0,\sigma_4,c_1,\bm{\mathfrak{a}}\right)^{\frac{1}{2}}+\mathfrak{g}_{2}\left(T_0,\sigma_4,c_2,\bm{\mathfrak{b}}\right)^{\frac{1}{2}}\right).
\end{flalign}
Here, $\mathcal{D}_{4}(x_0,\sigma)$, $\mathcal{U}^{-}(y_0)$, $\mathfrak{g}_1\left(T_0,\sigma_4,c_1,\bm{\mathfrak{a}}\right)$ and $\mathfrak{g}_{2}\left(T_0,\sigma_4,c_2,\bm{\mathfrak{b}}\right)$ are defined by~\eqref{eq:D4},~\eqref{eq:chiBound},~\eqref{eq:fg1} and~\eqref{eq:fg2}, respectively. Moving to the second term on the right-hand side of~\eqref{eq:thirdterm2}, we have
\begin{multline*}
\left|\int_{\sigma_5+\ie T}^{\sigma_5+2\ie T}G_2(z)\dif{z}\right| \leq \frac{1}{\mathcal{U}^{-}(T_0)}\left(\frac{2\pi}{T}\right)^{\frac{1}{2}-\sigma_5}\left(\int_{T}^{2T}\left|\sum_{n\leq T}\frac{d(n)}{n^{\sigma_5+\ie t}}\right|^{2}\dif{t}\right)^{\frac{1}{2}} \times \\
\times\left(\left(\int_{T}^{2T}\left|g_3(\sigma_5+\ie t)\right|^{2}\dif{t}\right)^{\frac{1}{2}}+\left(\int_{T}^{2T}\left|g_4(\sigma_5+\ie t)\right|^{2}\dif{t}\right)^{\frac{1}{2}}\right)
\end{multline*}
by the CBS inequality and Lemma~\ref{lem:chi} since $0<\sigma_5<1/2$. Estimation of the second and the third integral is similar as before. We obtain
\[
\int_{T}^{2T}\left|g_3(\sigma_5+\ie t)\right|^{2}\dif{t} \leq \mathfrak{g}_3\left(T_0,\sigma_5,c_1,\bm{\mathfrak{a}}\right)T\log^{3}{T}
\]
and
\[
\int_{T}^{2T}\left|g_4(\sigma_5+\ie t)\right|^{2}\dif{t} \leq \mathfrak{g}_4\left(T_0,\sigma_5,c_2,\bm{\mathfrak{b}}\right)T\log^{3}{T}
\]
by Lemmas~\ref{lem:MV},~\ref{lem:d2} and~\ref{lem:chi}, where
\begin{flalign}
\label{eq:fg3}
\mathfrak{g}_3\left(T_0,\sigma_5,c_1,\bm{\mathfrak{a}}\right) &\de \frac{1}{(2\pi)^{1+2\sigma_5}\left(\mathcal{U}^{-}(T_0)\right)^{2}}\left(\frac{1}{\pi}+\frac{1+\mathfrak{a}_1\left(\log{T_0}\right)^{\mathfrak{a}_2}}{2\pi T_0}\right)^{2\left(1-2\sigma_5\right)+4|c_1|} \times \nonumber \\
&\times \left(\mathcal{U}^{+}\left(T_0-\mathfrak{a}_1(\log{T_0})^{\mathfrak{a}_2}\right)\right)^{4}\left(\int_{-\infty}^{\infty}\left|\Gamma(c_1+\ie u)\right|\dif{u}\right)^{2}\times \nonumber \\
&\times\left(\frac{\eta(T_0)\mathcal{D}_5(T_0,2-2\sigma_5+2|c_1|)}{\left(1-2\sigma_5+2|c_1|\right)^{4}}+\frac{2\pi m_0\mathcal{D}_5(T_0,1-2\sigma_5+2|c_1|)}{\left(2|c_1|-2\sigma_5\right)^{4}}\right)
\end{flalign}
and
\begin{flalign}
\label{eq:fg4}
\mathfrak{g}_{4}\left(T_0,\sigma_5,c_2,\bm{\mathfrak{b}}\right) &\de \frac{1}{(2\pi)^{1+2\sigma_5}\left(\mathcal{U}^{-}(T_0)\right)^{2}}\left(\frac{1}{2\pi}-\frac{\mathfrak{b}_1\left(\log{T_0}\right)^{\mathfrak{b}_2}}{2\pi T_0}\right)^{2(1-2(\sigma_5+c_2))}\times \nonumber \\
&\times \left(\mathcal{U}^{+}\left(T_0-\mathfrak{b}_1\left(\log{T_0}\right)^{\mathfrak{b}_2}\right)\right)^{4}\left(\int_{-\infty}^{\infty}\left|\Gamma(c_2+\ie u)\right|\dif{u}\right)^{2}\times \nonumber \\
&\times\left(\eta(T_0)\mathcal{D}_{4}\left(T_0,2\left(1-\sigma_5-c_2\right)\right)+2\pi m_0\mathcal{D}_{4}\left(T_0,1-2\left(\sigma_5+c_2\right)\right)\right).
\end{flalign}
Here, $\mathcal{D}_{5}(x_0,\sigma)$, $\mathcal{D}_{4}(x_0,\sigma)$ and $\mathcal{U}^{\pm}(y_0)$ are defined by~\eqref{eq:D5},~\eqref{eq:D4} and~\eqref{eq:chiBound}, respectively. Therefore,
\begin{equation}
\label{eq:G2main}
\left|\int_{\sigma_5+\ie T}^{\sigma_5+2\ie T}G_2(z)\dif{z}\right| \leq \mathfrak{G}_4\left(T_0,\sigma_5,\bm{c},\bm{\mathfrak{a}},\bm{\mathfrak{b}}\right)T\log^{3}{T},
\end{equation}
where
\begin{flalign}
\label{eq:fG4}
\mathfrak{G}_4\left(T_0,\sigma_5,\bm{c},\bm{\mathfrak{a}},\bm{\mathfrak{b}}\right) &\de \frac{(2\pi)^{\frac{1}{2}-\sigma_5}}{\mathcal{U}^{-}(T_0)}\left(\eta(T_0)\mathcal{D}_{4}(T_0,2\sigma_5)+ 2\pi m_0\mathcal{D}_{4}(T_0,2\sigma_{5}-1)\right)^{\frac{1}{2}}\times \nonumber \\
&\times\left(\mathfrak{g}_3\left(T_0,\sigma_5,c_1,\bm{\mathfrak{a}}\right)^{\frac{1}{2}}+\mathfrak{g}_{4}\left(T_0,\sigma_5,c_2,\bm{\mathfrak{b}}\right)^{\frac{1}{2}}\right).
\end{flalign}
Here, $\mathcal{D}_{4}(x_0,\sigma)$, $\mathcal{U}^{-}(y_0)$, $\mathfrak{g}_3\left(T_0,\sigma_5,c_1,\bm{\mathfrak{a}}\right)$ and $\mathfrak{g}_{4}\left(T_0,\sigma_5,c_2,\bm{\mathfrak{b}}\right)$ are defined by~\eqref{eq:D4},~\eqref{eq:chiBound},~\eqref{eq:fg3} and~\eqref{eq:fg4}, respectively. It remains to estimate the last two terms on the right-hand side of~\eqref{eq:thirdterm2}. Beginning with the first such term, note that by writing $z=\sigma+\ie T$ in the case of the first integral and $z=\sigma+2\ie T$ in the case of the second integral, we have $\sigma\in[1/2,\sigma_4]$, implying $-1/2<\sigma+c_1<0$, $1/2<\sigma+c_2<3/2$, $|\sigma+c_1+\ie(u+kT)|\leq 1/2+kT+\mathfrak{a}_1(\log{T})^{\mathfrak{a}_2}$ for $k>0$ and $|u|\leq \mathfrak{a}_1(\log{T})^{\mathfrak{a}_2}$, and $|\sigma+c_2+\ie(u+kT)|\geq kT-\mathfrak{b}_1(\log{T})^{\mathfrak{b}_2}$ for $k>0$ and $|u|\leq\mathfrak{b}_1(\log{T})^{\mathfrak{b}_2}$. Define 
\begin{multline}
\label{eq:fg5}
\mathfrak{g}_{5}\left(T_0,\sigma_4,c_1,\bm{\mathfrak{a}}\right) \de \int_{\frac{1}{2}}^{\sigma_4}\left(\left(1+\frac{1}{T_0}\right)^{\sigma-\frac{1}{2}}\left(\frac{1}{2\pi}+\frac{\frac{1}{2}+\mathfrak{a}_{1}(\log{T_0})^{\mathfrak{a}_{2}}}{2\pi T_0}\right)^{1-2(\sigma-|c_1|)} \right. \\
\left.+\left(1+\frac{1}{2T_0}\right)^{\sigma-\frac{1}{2}}\left(\frac{1}{\pi}+\frac{\frac{1}{2}+\mathfrak{a}_{1}(\log{T_0})^{\mathfrak{a}_{2}}}{2\pi T_0}\right)^{1-2(\sigma-|c_1|)}\right) 
\frac{\mathcal{D}_{2}(T_0,1-\sigma+|c_1|)}{\sigma\left(\sigma-|c_1|\right)^{2}}\dif{\sigma},    
\end{multline}
where $\mathcal{D}_2(x_0,\sigma)$ is defined by~\eqref{eq:D2}, and
\begin{multline}
\label{eq:fg6}
\mathfrak{g}_{6}\left(T_0,\sigma_4,c_2,\bm{\mathfrak{b}}\right) \de \int_{\frac{1}{2}}^{\sigma_4}\left(\left(1+\frac{1}{T_0}\right)^{\sigma-\frac{1}{2}}\left(\frac{1}{2\pi}-\frac{\mathfrak{b}_{1}(\log{T_0})^{\mathfrak{b}_{2}}}{2\pi T_0}\right)^{1-2(\sigma+c_2)}\right.\\
\left.+\left(1+\frac{1}{2T_0}\right)^{\sigma-\frac{1}{2}}\left(\frac{1}{\pi}-\frac{\mathfrak{b}_{1}(\log{T_0})^{\mathfrak{b}_{2}}}{2\pi T_0}\right)^{1-2(\sigma+c_2)}\right)\frac{\dif{\sigma}}{\sigma(\sigma+c_2)}.  
\end{multline}
By Lemmas~\ref{lem:d} and~\ref{lem:chi} we thus have
\begin{equation}
\label{eq:G1res}
\left|\left(\int_{\frac{1}{2}+\ie T}^{\sigma_4+\ie T}+\int_{\sigma_4+2\ie T}^{\frac{1}{2}+2\ie T}\right)G_1(z)\dif{z}\right| \leq \mathfrak{G}_{5}\left(T_0,\sigma_4,\bm{c},\bm{\mathfrak{a}},\bm{\mathfrak{b}}\right)T\log^{2}{T},
\end{equation}
where
\begin{multline}
\label{eq:fG5}
\mathfrak{G}_{5}\left(T_0,\sigma_4,\bm{c},\bm{\mathfrak{a}},\bm{\mathfrak{b}}\right) \de \frac{\mathcal{D}_{1}(T_0,0)}{2\pi}\left(\frac{\mathcal{U}^{+}\left(T_0-\mathfrak{a}_{1}(\log{T_0})^{\mathfrak{a}_{2}}\right)}{\mathcal{U}^{-}(T_0)}\right)^{2}\mathfrak{g}_{5}\left(T_0,\sigma_4,c_1,\bm{\mathfrak{a}}\right)\int_{-\infty}^{\infty}\left|\Gamma(c_1+\ie u)\right|\dif{u} \\
+ \frac{\mathcal{D}_{1}(T_0,0)\mathcal{D}_{1}(T_0,-1/2)}{2\pi}\left(\frac{\mathcal{U}^{+}\left(T_0-\mathfrak{b}_{1}(\log{T_0})^{\mathfrak{b}_{2}}\right)}{\mathcal{U}^{-}(T_0)}\right)^{2}\mathfrak{g}_{6}\left(T_0,\sigma_4,c_2,\bm{\mathfrak{b}}\right)\int_{-\infty}^{\infty}\left|\Gamma(c_2+\ie u)\right|\dif{u}.
\end{multline}
Here, $\mathcal{D}_{1}(x_0,\sigma_0)$, $\mathcal{U}^{\pm}(y_0)$, $\mathfrak{g}_{5}\left(T_0,\sigma_4,c_1,\bm{\mathfrak{a}}\right)$ and $\mathfrak{g}_{6}\left(T_0,\sigma_4,c_2,\bm{\mathfrak{b}}\right)$ are defined by~\eqref{eq:D1},~\eqref{eq:chiBound},~\eqref{eq:fg5} and~\eqref{eq:fg6}, respectively. Turning to the second term on the right-hand side of~\eqref{eq:thirdterm2}, we have, similarly as before, that $\sigma\in[\sigma_5,1/2]$, implying $-1<\sigma+c_1<0$, $1/2<\sigma+c_2<1$, $|\sigma+c_1+\ie(u+kT)|\leq 1+kT+\mathfrak{a}_1(\log{T})^{\mathfrak{a}_2}$ for $k>0$ and $|u|\leq \mathfrak{a}_1(\log{T})^{\mathfrak{a}_2}$, and $|\sigma+c_2+\ie(u+kT)|\geq kT-\mathfrak{b}_1(\log{T})^{\mathfrak{b}_2}$ for $k>0$ and $|u|\leq \mathfrak{b}_1(\log{T})^{\mathfrak{b}_2}$. Define
\begin{multline}
\label{eq:fg7}
\mathfrak{g}_{7}\left(T_0,\sigma_5,c_1,\bm{\mathfrak{a}}\right) \de \int_{\sigma_5}^{\frac{1}{2}}\left((2\pi)^{1-2\sigma}\left(\frac{1}{2\pi}+\frac{1+\mathfrak{a}_{1}(\log{T_0})^{\mathfrak{a}_2}}{2\pi T_0}\right)^{1-2(\sigma-|c_1|)}\right. \\
\left.+\pi^{1-2\sigma}\left(\frac{1}{\pi}+\frac{1+\mathfrak{a}_1(\log{T_0})^{\mathfrak{a}_2}}{2\pi T_0}\right)^{1-2(\sigma-|c_1|)}\right)\frac{\mathcal{D}_{2}(T_0,1-\sigma+|c_1|)}{(1-\sigma)\left(\sigma-|c_1|\right)^{2}}\dif{\sigma},
\end{multline}
where $\mathcal{D}_2(x_0,\sigma)$ is defined by~\eqref{eq:D2}, and
\begin{multline}
\label{eq:fg8}
\mathfrak{g}_{8}\left(T_0,\sigma_5,c_2,\bm{\mathfrak{b}}\right) \de \int_{\sigma_5}^{\frac{1}{2}}\left((2\pi)^{1-2\sigma}\left(\frac{1}{2\pi}-\frac{\mathfrak{b}_1(\log{T_0})^{\mathfrak{b}_2}}{2\pi T_0}\right)^{1-2(\sigma+c_2)}\right.\\
\left.+\pi^{1-2\sigma}\left(\frac{1}{\pi}-\frac{\mathfrak{b}_1(\log{T_0})^{\mathfrak{b}_2}}{2\pi T_0}\right)^{1-2(\sigma+c_2)}\right)\frac{\dif{\sigma}}{(1-\sigma)(\sigma+c_2)}.
\end{multline}
By Lemmas~\ref{lem:d} and~\ref{lem:chi} we thus have
\begin{equation}
\label{eq:G2res}
\left|\left(\int_{\frac{1}{2}+\ie T}^{\sigma_5+\ie T}+\int_{\sigma_5+2\ie T}^{\frac{1}{2}+2\ie T}\right)G_{2}(z)\dif{z}\right| \leq \mathfrak{G}_{6}\left(T_0,\sigma_5,\bm{c},\bm{\mathfrak{a}},\bm{\mathfrak{b}}\right)T\log^{2}{T},
\end{equation}
where
\begin{multline}
\label{eq:fG6}
\mathfrak{G}_{6}\left(T_0,\sigma_5,\bm{c},\bm{\mathfrak{a}},\bm{\mathfrak{b}}\right) \de \frac{\mathcal{D}_{1}(T_0,0)}{2\pi}\left(\frac{\mathcal{U}^{+}\left(T_0-\mathfrak{a}_{1}(\log{T_0})^{\mathfrak{a}_{2}}\right)}{\mathcal{U}^{-}(T_0)}\right)^{2}\mathfrak{g}_{7}\left(T_0,\sigma_5,c_1,\bm{\mathfrak{a}}\right)\int_{-\infty}^{\infty}\left|\Gamma(c_1+\ie u)\right|\dif{u} \\
+ \frac{1}{2\pi}\left(\frac{\mathcal{D}_{1}(T_0,0)\cdot\mathcal{U}^{+}\left(T_0-\mathfrak{b}_{1}(\log{T_0})^{\mathfrak{b}_{2}}\right)}{\mathcal{U}^{-}(T_0)}\right)^{2}\mathfrak{g}_{8}\left(T_0,\sigma_5,c_2,\bm{\mathfrak{b}}\right)\int_{-\infty}^{\infty}\left|\Gamma(c_2+\ie u)\right|\dif{u}.
\end{multline}
Here, $\mathcal{D}_{1}(x_0,\sigma_0)$, $\mathcal{U}^{\pm}(y_0)$, $\mathfrak{g}_{7}\left(T_0,\sigma_5,c_1,\bm{\mathfrak{a}}\right)$ and $\mathfrak{g}_{8}\left(T_0,\sigma_5,c_2,\bm{\mathfrak{b}}\right)$ are defined by~\eqref{eq:D1},~\eqref{eq:chiBound},~\eqref{eq:fg7} and~\eqref{eq:fg8}, respectively.

Finally, it remains to collect the relevant estimates in order to bound the left-hand sides of~\eqref{eq:thirdterm1} and~\eqref{eq:thirdterm2}. For the former equality, we are using~\eqref{eq:F1F2main} and~\eqref{eq:F1F2res}, while for the latter equality we are using~\eqref{eq:G1main},~\eqref{eq:G2main},~\eqref{eq:G1res} and~\eqref{eq:G2res}. The final result is that
\begin{equation}
\label{eq:J22}   
2\sum_{k=3}^{6}\left|\int_{T}^{2T}\left(J_1(t)+J_2(t)\right)J_{k}(t)\dif{t}\right| \leq 2\mathcal{J}_{2,2}\left(T_0,\bm{\sigma}',\bm{\sigma}'',\bm{c},\bm{\mathfrak{a}},\bm{\mathfrak{b}}\right)T\log^{3}{T},
\end{equation}
where
\begin{multline}
\label{eq:cJ22} 
\mathcal{J}_{2,2}\left(T_0,\bm{\sigma}',\bm{\sigma}'',\bm{c},\bm{\mathfrak{a}},\bm{\mathfrak{b}}\right) = \mathfrak{G}_1\left(T_0,\bm{\sigma}'\right) + \mathfrak{G}_{3}\left(T_0,\sigma_4,\bm{c},\bm{\mathfrak{a}},\bm{\mathfrak{b}}\right) + \mathfrak{G}_{4}\left(T_0,\sigma_5,\bm{c},\bm{\mathfrak{a}},\bm{\mathfrak{b}}\right) \\ 
+ \frac{1}{\log{T_0}}\left(\mathfrak{G}_{2}\left(T_0,\bm{\sigma}'\right)+\mathfrak{G}_{5}\left(T_0,\sigma_4,\bm{c},\bm{\mathfrak{a}},\bm{\mathfrak{b}}\right)+\mathfrak{G}_{6}\left(T_0,\sigma_5,\bm{c},\bm{\mathfrak{a}},\bm{\mathfrak{b}}\right)\right).
\end{multline}
Here, the functions $\mathfrak{G}_{1},\ldots,\mathfrak{G}_{6}$ are defined by~\eqref{eq:fG1},~\eqref{eq:fG2},~\eqref{eq:fG3},~\eqref{eq:fG4},~\eqref{eq:fG5} and~\eqref{eq:fG6}, respectively.

\subsubsection{IV. part}
\label{sec:IVpart}

By Lemmas~\ref{lem:MV},~\ref{lem:d2Exp2} and~\ref{lem:d2Exp3} we have
\begin{equation}
\label{eq:cJ3}
\frac{1}{T\log^{3}{T}}\int_{T}^{2T}\left|J_3(t)\right|^{2}\dif{t} \leq \mathcal{J}_{3}(T_0) \de \left(\eta(T_0)+2\pi m_0\right)\mathcal{D}_{7}(T_0),
\end{equation}
where $\mathcal{D}_7(x_0)$ is defined by~\eqref{eq:D7}, and
\begin{equation}
\label{eq:cJ4}
\frac{1}{T\log^{3}{T}}\int_{T}^{2T}\left|J_4(t)\right|^{2}\dif{t} \leq \mathcal{J}_{4}(T_0) \de \eta(T_0)\mathcal{D}_{4}(T_0,-1)+2\pi m_0\mathcal{D}_{4}(T_0,-2),
\end{equation}
where $\mathcal{D}_{4}(x_0,\sigma)$ is defined by~\eqref{eq:D4}. Turning to the integral of $|J_5(t)|^2$, observe that
\begin{equation}
\label{eq:cJ5}    
\frac{1}{T\log^{3}{T}}\int_{T}^{2T}\left|J_5(t)\right|^{2}\dif{t} = \frac{1}{T\log^{3}{T}}\int_{T}^{2T}\left|g_3\left(\frac{1}{2}+\ie t\right)\right|^{2}\dif{t} \leq \mathcal{J}_{5}\left(T_0,c_1,\bm{\mathfrak{a}}\right)\de \mathfrak{g}_1\left(T_0,\frac{1}{2},c_1,\bm{\mathfrak{a}}\right)
\end{equation}
by~\eqref{eq:intg3c1}, where $\mathfrak{g}_1\left(T_0,\sigma_4,c_1,\bm{\mathfrak{a}}\right)$ is defined by~\eqref{eq:fg1}. Similarly,
\begin{equation}
\label{eq:cJ6}
\frac{1}{T\log^{3}{T}}\int_{T}^{2T}|J_6(t)|^{2}\dif{t} = \frac{1}{T\log^{3}{T}}\int_{T}^{2T}\left|g_4\left(\frac{1}{2}+\ie t\right)\right|^{2}\dif{t} \leq \mathcal{J}_{6}\left(T_0,c_2,\bm{\mathfrak{b}}\right)\de \mathfrak{g}_{2}\left(T_0,\frac{1}{2},c_2,\bm{\mathfrak{b}}\right)
\end{equation}
by~\eqref{eq:intg4c1}, where $\mathfrak{g}_{2}\left(T_0,\sigma_4,c_2,\bm{\mathfrak{b}}\right)$ is defined by~\eqref{eq:fg2}.

\subsubsection{V.~part}
\label{sec:Vpart}

Using estimates~\eqref{eq:cJ3}--\eqref{eq:cJ6}, we obtain
\begin{equation}
\label{eq:cJMixed}
2\mathop{\sum\sum}_{3\leq n<m\leq 6}\int_{T}^{2T}\left|J_{n}(t)J_{m}(t)\right|\dif{t} \leq \left(2\mathop{\sum\sum}_{3\leq n<m\leq 6}\left(\mathcal{J}_{n}\mathcal{J}_m\right)^{\frac{1}{2}}\right)T\log^{3}{T}
\end{equation}
by the CBS inequality, where we borrowed the shorter notation for $\mathcal{J}_n$ from Theorem~\ref{thm:4thPMExplicitGeneral}.

\subsubsection{VI.~part}
\label{sec:VIpart}

By Lemma~\ref{lem:chi} we have
\[
\left|\chi\left(\frac{1}{2}+c_1+\ie (t+u)\right)\right| \leq (2T)^{|c_1|}\left(0.4+\frac{16}{\left(2T_0+\mathfrak{a}_1\left(\log{T_0}\right)^{\mathfrak{a}_2}\right)^{|c_1|}}\right)\left(1+\frac{|u|}{2T_0}\right)^{|c_1|}
\]
and
\[
\left|\Gamma(c_1+\ie u)\right| \leq \sqrt{2\pi}\exp{\left(\frac{1}{2\mathfrak{a}_1^2\left(\log{T_0}\right)^{2\mathfrak{a}_2}}\right)}T^{-\mathfrak{a}_1\left(\log{T}\right)^{\mathfrak{a}_{2}-1}}\frac{e^{-|u|/2}}{|u|^{|c_1|+1/2}}
\]
since $c_1\in(-1,-1/2)$, $t\in[T,2T]$ and $|u|\geq\mathfrak{a}_1(\log{T})^{\mathfrak{a}_2}$. Therefore,
\begin{equation}
\label{eq:cJ01Est}
\frac{1}{2\pi}\left|\left(\int_{-\infty}^{-\mathfrak{a}_1\left(\log{T}\right)^{\mathfrak{a}_2}}+\int_{\mathfrak{a}_1\left(\log{T}\right)^{\mathfrak{a}_2}}^{\infty}\right)P(0,1,c_1;u)\dif{u}\right| \leq \mathcal{J}_{01}\left(T_0,c_1,\bm{\mathfrak{a}}\right)T^{\frac{1}{2}-\mathfrak{a}_1\left(\log{T}\right)^{\mathfrak{a}_{2}-1}}\log{T}
\end{equation}
by Lemma~\ref{lem:d}, where
\begin{multline}
\label{eq:cJ01}
\mathcal{J}_{01}\left(T_0,c_1,\bm{\mathfrak{a}}\right) \de 2^{2|c_1|}\sqrt{\frac{2}{\pi}}\left(0.4+\frac{16}{\left(2T_0+\mathfrak{a}_1\left(\log{T_0}\right)^{\mathfrak{a}_2}\right)^{|c_1|}}\right)^{2}\exp{\left(\frac{1}{2\mathfrak{a}_1^2\left(\log{T_0}\right)^{2\mathfrak{a}_2}}\right)} \times \\
\times \frac{\mathcal{D}_{2}\left(T_0,\frac{1}{2}+|c_1|\right)}{\left(|c_1|-\frac{1}{2}\right)^{2}} \int_{4}^{\infty}\left(1+\frac{u}{2T_0}\right)^{2|c_1|}\frac{e^{-\frac{1}{2}u}}{u^{|c_1|+\frac{1}{2}}}\dif{u}
\end{multline}
since $\mathfrak{a}_1\left(\log{T_0}\right)^{\mathfrak{a}_{2}}\geq\log{T_0}\geq 4$. Here, $\mathcal{D}_{2}(x_0,\sigma_0)$ is defined by~\eqref{eq:D2}. By Lemma~\ref{lem:chi} we also have
\[
\left|\chi\left(\frac{1}{2}+c_1+\ie (t+u)\right)\right| \leq \frac{6.31}{\frac{1}{2}-c_2}
\]
and
\[
\left|\Gamma(c_2+\ie u)\right| \leq \sqrt{2\pi}\exp{\left(\frac{1}{2\mathfrak{b}_1^2\left(\log{T_0}\right)^{2\mathfrak{b}_2}}\right)}\left(\mathfrak{b}_1\left(\log{T_0}\right)^{\mathfrak{b}_{2}}\right)^{c_2-\frac{1}{2}}T^{-\mathfrak{b}_1\left(\log{T}\right)^{\mathfrak{b}_{2}-1}}e^{-\frac{1}{2}|u|}
\]
since $c_2\in(0,1/2)$, $t\in[T,2T]$ and $|u|\geq \mathfrak{b}_1\left(\log{T}\right)^{\mathfrak{b}_{2}}$. Therefore,
\begin{equation}
\label{eq:cJ02Est}
\frac{1}{2\pi}\left|\left(\int_{-\infty}^{-\mathfrak{b}_1\left(\log{T}\right)^{\mathfrak{b}_2}}+\int_{\mathfrak{b}_1\left(\log{T}\right)^{\mathfrak{b}_2}}^{\infty}\right)P(1,0,c_2;u)\dif{u}\right| \leq \mathcal{J}_{02}\left(T_0,c_2,\bm{\mathfrak{b}}\right)T^{\frac{1}{2}+2c_2-\mathfrak{b}_1\left(\log{T}\right)^{\mathfrak{b}_{2}-1}}\log{T}
\end{equation}
by Lemma~\ref{lem:d}, where
\begin{equation}
\label{eq:cJ02}
\mathcal{J}_{02}\left(T_0,c_2,\bm{\mathfrak{b}}\right)\de \frac{2}{e^4}\sqrt{\frac{2}{\pi}}\left(\frac{6.31}{\frac{1}{2}-c_2}\right)^{2}\exp{\left(\frac{1}{2\mathfrak{b}_1^2\left(\log{T_0}\right)^{2\mathfrak{b}_2}}\right)} \left(\mathfrak{b}_1\left(\log{T_0}\right)^{\mathfrak{b}_{2}}\right)^{c_2-\frac{1}{2}}\frac{\mathcal{D}_1(T_0,0)}{\frac{1}{2}+c_2}
\end{equation}
since $\mathfrak{b}_1\left(\log{T_0}\right)^{\mathfrak{b}_{2}}\geq 2\log{T_0}\geq 8$. Here, $\mathcal{D}_{1}(x_0,\sigma_0)$ is defined by~\eqref{eq:D1}. Then we have
\begin{equation}
\label{eq:cJ0Est}    
2\int_{T}^{2T}\left|J_0(t)\right|\cdot\left|\zeta\left(\frac{1}{2}+\ie t\right)\right|^{2}\dif{t} + \int_{T}^{2T}\left|J_0(t)\right|^{2}\dif{t} \leq \mathcal{J}_0\left(T_0,\bm{c},\bm{\mathfrak{a}},\bm{\mathfrak{b}}\right)T\log^{3}{T},
\end{equation}
where
\begin{multline}
\label{eq:cJ0}
\mathcal{J}_0\left(T_0,\bm{c},\bm{\mathfrak{a}},\bm{\mathfrak{b}}\right) \de \frac{7}{2}\left(\left(1+\frac{3}{14}\mathcal{J}_{01}\left(T_0,c_1,\bm{\mathfrak{a}}\right)\right)\mathcal{J}_{01}\left(T_0,c_1,\bm{\mathfrak{a}}\right) T_0^{1-\mathfrak{a}_1(\log{T_0})^{\mathfrak{a}_{2}-1}}\right. \\
\left.+\left(1+\frac{3}{14}\mathcal{J}_{02}\left(T_0,c_2,\bm{\mathfrak{b}}\right)\right)\mathcal{J}_{02}\left(T_0,c_2,\bm{\mathfrak{b}}\right) T_0^{2-\mathfrak{b}_1(\log{T_0})^{\mathfrak{b}_{2}-1}}+12.1T_0 e^{-\frac{\pi}{2}T_0}\right),
\end{multline}
and $\mathcal{J}_{01}\left(T_0,c_1,\bm{\mathfrak{a}}\right)$ and $\mathcal{J}_{02}\left(T_0,c_2,\bm{\mathfrak{b}}\right)$ are defined by~\eqref{eq:cJ01} and~\eqref{eq:cJ02}, respectively. In the derivation of~\eqref{eq:cJ0Est}, we have used inequalities~\eqref{eq:cJ01Est} and~\eqref{eq:cJ02Est}, together with Lemma~\ref{lem:res} and $\left|\zeta(1/2+\ie t)\right|\leq  0.618 t^{\frac{1}{6}} \log t\leq (2T)^{1/6}\log{(2T)}$ for $t\in[T,2T]$, see~\cite[Theorem~1.1]{HiaryPatelYang}.

\subsubsection{Proof of estimate~\eqref{eq:4thPMExplicitGeneral}}

First, by~\eqref{eq:J1} we have
\[
\left|\mathcal{M}_{2}(T,2T)-\frac{1}{2\pi^2}T\log^{4}{T}\right| \leq 
\mathcal{J}_{1}(T_0)T\log^{3}{T} + \left|\mathcal{M}_{2}(T,2T)-2\int_{T}^{2T}\left|J_1(t)\right|^{2}\dif{t}\right|,
\]
where $\mathcal{J}_{1}(T_0)$ is defined by~\eqref{eq:cJ1}. We are using~\eqref{eq:4thPMmainEst} and~\eqref{eq:4thPMtermR} in order to estimate the last term in the last inequality. Then~\eqref{eq:4thPMExplicitGeneral} follows by~\eqref{eq:J1J2square},~\eqref{eq:J22},~\eqref{eq:cJ3},~\eqref{eq:cJ4},~\eqref{eq:cJ5},~\eqref{eq:cJ6},~\eqref{eq:cJMixed} and~\eqref{eq:cJ0Est}.

\subsubsection{Proof of estimate~\eqref{eq:4thPMExplicitGeneral2}}

We are using~\eqref{eq:4thPMmainEst2}. By~\eqref{eq:J1} we have
\[
4\int_{T}^{2T}\left|J_1(t)\right|^{2}\dif{t} \leq \frac{1}{\pi^2}T\log^{4}{T} + 2\mathcal{J}_{1}(T_0)T\log^{3}{T}.
\]
Also, the above inequality, together with the CBS inequality and the inequalities from Section~\ref{sec:IVpart}, implies
\begin{flalign*}
4\sum_{k=3}^{6}\int_{T}^{2T}\left|J_1(t)J_{k}(t)\right|\dif{t} &\leq 2\left(4\int_{T}^{2T}\left|J_1(t)\right|^{2}\dif{t}\right)^{\frac{1}{2}}\sum_{k=3}^{6}\left(\int_{T}^{2T}\left|J_k(t)\right|^{2}\dif{t}\right)^{\frac{1}{2}} \\
&\leq 2\left(T\log^{3}{T}\right)\sum_{k=3}^{6}\left(\frac{\mathcal{J}_{k}}{\pi^2}\log{T}+2\mathcal{J}_{1}\mathcal{J}_{k}\right)^{\frac{1}{2}}.
\end{flalign*}
Other terms in~\eqref{eq:4thPMmainEst2} are estimated as in the proof of~\eqref{eq:4thPMExplicitGeneral}, that is, we are using estimates from Sections~\ref{sec:IVpart},~\ref{sec:Vpart} and~\ref{sec:VIpart}. The proof is thus complete.

\section{Appendix}
\label{sec:appendix}

The main purpose of this section is to state an improved zero density estimate from~\cite{KADIRI201822}. Lemma \ref{lem:fxone} can be used instead of~\cite[Lemma~4.3]{KADIRI201822}, which then implies
\[
N(\sigma,T) \leq \frac{\mathcal{C}_{1}}{ 2 \pi d}T^{\frac{8}{3}\left(1-\sigma\right)}  \log (kT)\left(\log{T}\right)^{5-4\sigma}  + \frac{\mathcal{C}_{2}}{2 \pi d}\log^{2}{T} 
\]
for $T\geq H_0=3.0610046\cdot10^{10}$, where $10^9/H_0\leq k\leq 1$, $d>0$, and $\mathcal{C}_{1}=\mathcal{C}_{1}\left(\alpha,d,\delta,k,H_0-1,\sigma\right)$ and $\mathcal{C}_{2}=\mathcal{C}_{2}\left(d,\eta,k,H_0-1,\mu,\sigma\right)$ are functions defined by~\cite[Equations~(4.72) and~(4.73)]{KADIRI201822} and depend on several variables, see~\cite[Lemma~4.14]{KADIRI201822} and~\cite[pp.~1223--1224]{CullyJohnstonRvM} for details. We choose $k=1$, $\mu=1.2363$ and $\eta=0.25345$. The parameters $\alpha$, $\delta$, and $d$ are calculated such that the value for $\mathcal{C}_1\left(\alpha,d,\delta,k,H_0-1,\sigma_2\right)/(2\pi d)$ is the smallest possible for each of $\sigma_2$, see Table~\ref{tab:KLN}. The final result is that
\begin{equation}
\label{eq:KLN}
N(\sigma,T) \leq A\cdot T^{\frac{8}{3}\left(1-\sigma\right)}\left(\log{T}\right)^{2(3-2\sigma)}  + B\cdot\log^{2}{T}
\end{equation}
for $T\geq 3\cdot10^{12}$ and $\sigma\in[\sigma_1,\sigma_2]$, where the values for $A$ and $B$ are given by Table~\ref{tab:KLN}.

\begin{table}[h]
\centering
\footnotesize{
\begin{tabular}{llllllll} 
\toprule
$\sigma_1$ & $\sigma_2$ & $A$ & $B$ & $\alpha$ & $\delta$ & $d$ \\ 
\midrule
$0.62500$ & $0.65625$ & $2.900$ & $5.027$ & $0.242$ & $0.324$ & $0.344$ \\
$0.65625$& $0.68750$ & $3.474$ & $4.789$ & $0.224$ & $0.324$ & $0.343$ \\
$0.68750$& $0.71875$ & $4.149$ & $4.536$ & $0.206$ & $0.324$ & $0.343$ \\
$0.71875$ & $0.75000$ & $4.940$ & $4.296$ & $0.189$ & $0.324$ & $0.342$ \\
$0.75000$ & $0.78125$ & $5.860$ & $4.042$ & $0.171$ & $0.324$ & $0.342$ \\
$0.78125$ & $0.81250$ & $6.925$ & $3.800$ & $0.153$ & $0.324$ & $0.341$ \\
$0.81250$ & $0.84375$ & $8.148$ & $3.546$ & $0.136$ & $0.324$ & $0.341$ \\
$0.84375$ & $0.87500$ & $9.543$ & $3.301$ & $0.119$ & $0.324$ & $0.340$ \\
$0.87500$ & $0.90625$ & $11.12$ & $3.055$ & $0.102$ & $0.324$ & $0.339$ \\
$0.90625$ & $0.93750$ & $12.88$ & $2.807$ & $0.086$ & $0.324$ & $0.338$  \\
$0.93750$ & $0.96875$ & $14.81$ & $2.558$ & $0.070$ & $0.325$ & $0.337$ \\
$0.96875$ & $0.97500$ & $15.21$ & $2.301$ & $0.067$ & $0.325$ & $0.337$ \\
$0.97500$ & $0.98125$ & $15.62$ & $2.249$ & $0.064$ & $0.325$ & $0.337$ \\
$0.98125$ &$0.98750$ & $16.04$ & $2.204$ & $0.061$ & $0.325$ & $0.336$ \\
$0.98750$ & $0.99375$ & $16.46$ & $2.153$ & $0.058$ & $0.325$ & $0.336$ \\
$0.99375$ & $1.00000$ & $16.89$ & $2.101$ & $0.056$ & $0.325$ & $0.336$ \\
\bottomrule
\end{tabular}}
\caption{The values for $A$ and $B$ from~\eqref{eq:KLN} for the corresponding $\sigma \in [\sigma_1, \sigma_2]$.}
\label{tab:KLN}
\end{table}








\newcommand{\etalchar}[1]{$^{#1}$}
\providecommand{\bysame}{\leavevmode\hbox to3em{\hrulefill}\thinspace}
\providecommand{\MR}{\relax\ifhmode\unskip\space\fi MR }
\providecommand{\MRhref}[2]{%
  \href{http://www.ams.org/mathscinet-getitem?mr=#1}{#2}
}
\providecommand{\href}[2]{#2}

\end{document}